\documentclass[11pt,a4paper,reqno,obeyFinal]{article}

\usepackage{template}
\usepackage{subcaption}

\usepackage{fullpage}

\def\d{\mathrm{d}}

\newcommand{\PLP}{\mathrm{PLP}}

\newcommand{\C}{\mathcal{C}}

\newcommand{\Z}{\mathbb{Z}}
\newcommand{\LL}{\mathbb{L}}
\newcommand{\R}{\mathbb{R}}
\newcommand{\N}{\mathbb{N}}

\newcommand{\eqd}{\stackrel{d}{=}}

\newcommand{\IP}{\mathbb{P}}
\newcommand{\IE}{\mathbb{E}}

\newcommand{\clzero}{\tilde{\mathsf{C}}_0}

%%%%%%%%%%%%%%%%%%%%%%%%%%%% Defining important new constants %%%%%%%
  \newcounter{iconst}
  \newcommand{\newiconst}[1]{\refstepcounter{iconst}\label{#1}}
  \newcommand{\useiconst}[1]{c_{\textnormal{\tiny \ref{#1}}}}
  \setcounter{iconst}{-1}
%%%%%%%%%%%%%%%%%%%%%%%%%%%%%%%%%%%%%%%%%%%%%%%%%%%%%%%%%%%%%%%%%%%%%

\begin{document}

\title{Cylinders' percolation: decoupling and applications}

\date{\today}
\author{Caio Alves \thanks{Alfréd Rényi Institute of Mathematics, Budapest, 1053 Hungary.}
  \and
  Augusto Teixeira \thanks{IMPA, Estrada Dona Castorina 110, 22460-320 Rio de Janeiro, RJ - Brazil}}

\maketitle

\begin{abstract}

  In this paper we establish a strong decoupling inequality for the cylinder's percolation process introduced by Tykesson and Windisch in \cite{TW10b}.
  This model features a very strong dependency structure, making it difficult to study, and this is why such decoupling inequalities are desirable.
  It is important to notice that the type of dependencies featured by cylinder's percolation is particularly intricate, given that the cylinders have infinite range (unlike some models like Boolean percolation) while at the same time being rigid bodies (unlike processes such as Random Interlacements).
  Our work introduces a new notion of fast decoupling, proves that it holds for the model in question and finishes with an application.
  More precisely, we prove that for a small enough density of cylinders, a random walk on a connected component of the vacant set is transient for all dimensions $d \geq 3$.

  \bigskip

  \noindent
  \emph{Keywords and phrases.}
  MSC 2010: 60K35; 82B43.
\end{abstract}

%\newpage

\section{Introduction}

The Cylinder's Percolation model, introduced by Tykesson and Windisch in \cite{TW10b} by suggestion of Itai Benjamini, consists of a random cloud of cylinders in $\mathbb{R}^d$, for $d \geq 3$.
While the width of these cylinders is fixed to be one, their central axes are randomly distributed according to a Poisson Point process in the space of lines.
This Poisson process has intensity proportional to the Haar measure, which is the unique (up to multiplication constants) measure on the space of lines, which is invariant with respect to both rotations and translations of $\mathbb{R}^d$.
See Subsection~\ref{ss:definition_model} for a precise definition of the model and Figure~\ref{f:process_sim} for an illustration.
The intensity of the model is governed by a multiplicative constant $u$, that modulates how many cylinders are present in the picture.

\begin{figure}
  \centering
  \begin{subfigure}[b]{0.49\textwidth}
      \centering
      \includegraphics[width=\textwidth]{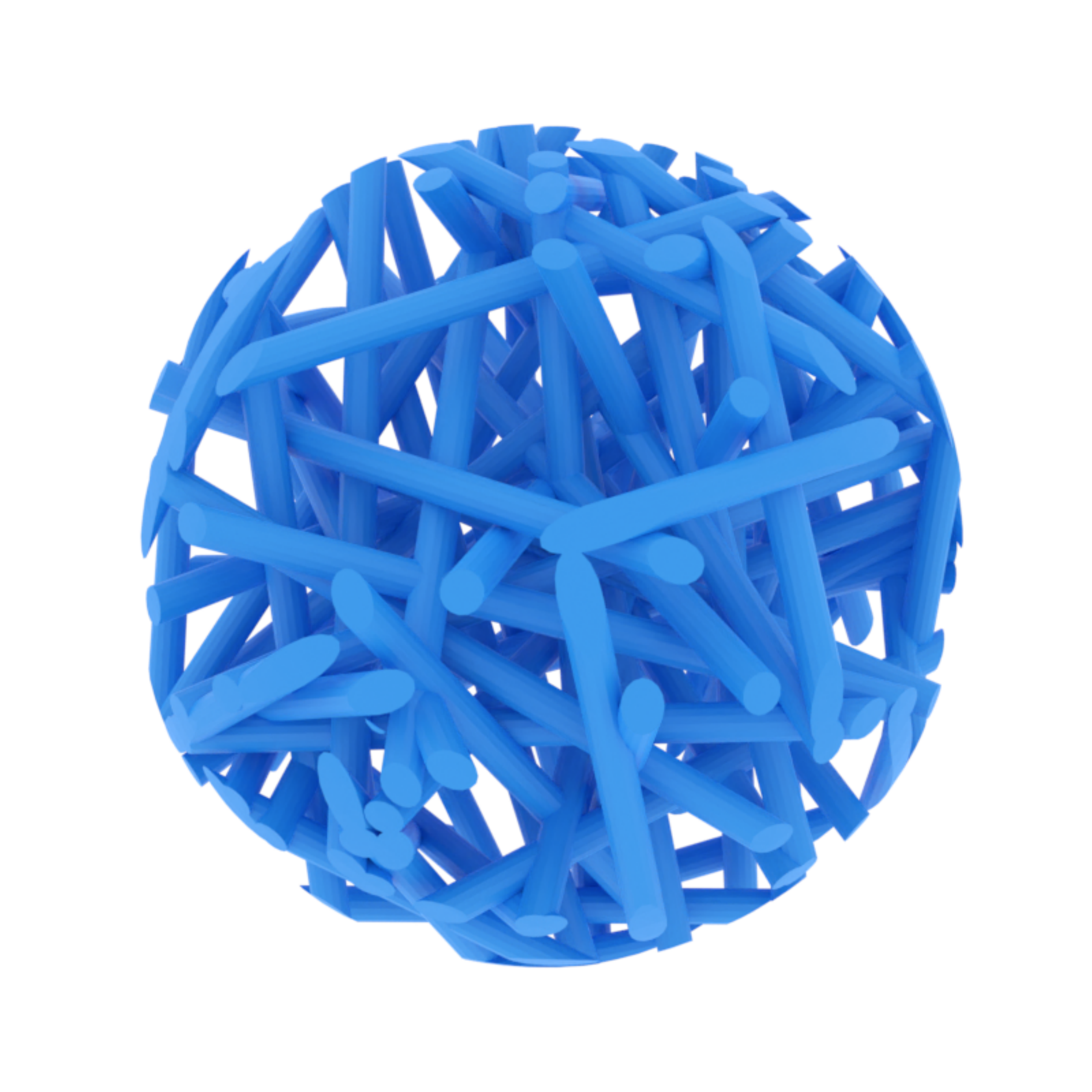}
      \label{f:process_sim_1}
  \end{subfigure}
  \begin{subfigure}[b]{0.49\textwidth}
      \centering
      \includegraphics[width=\textwidth]{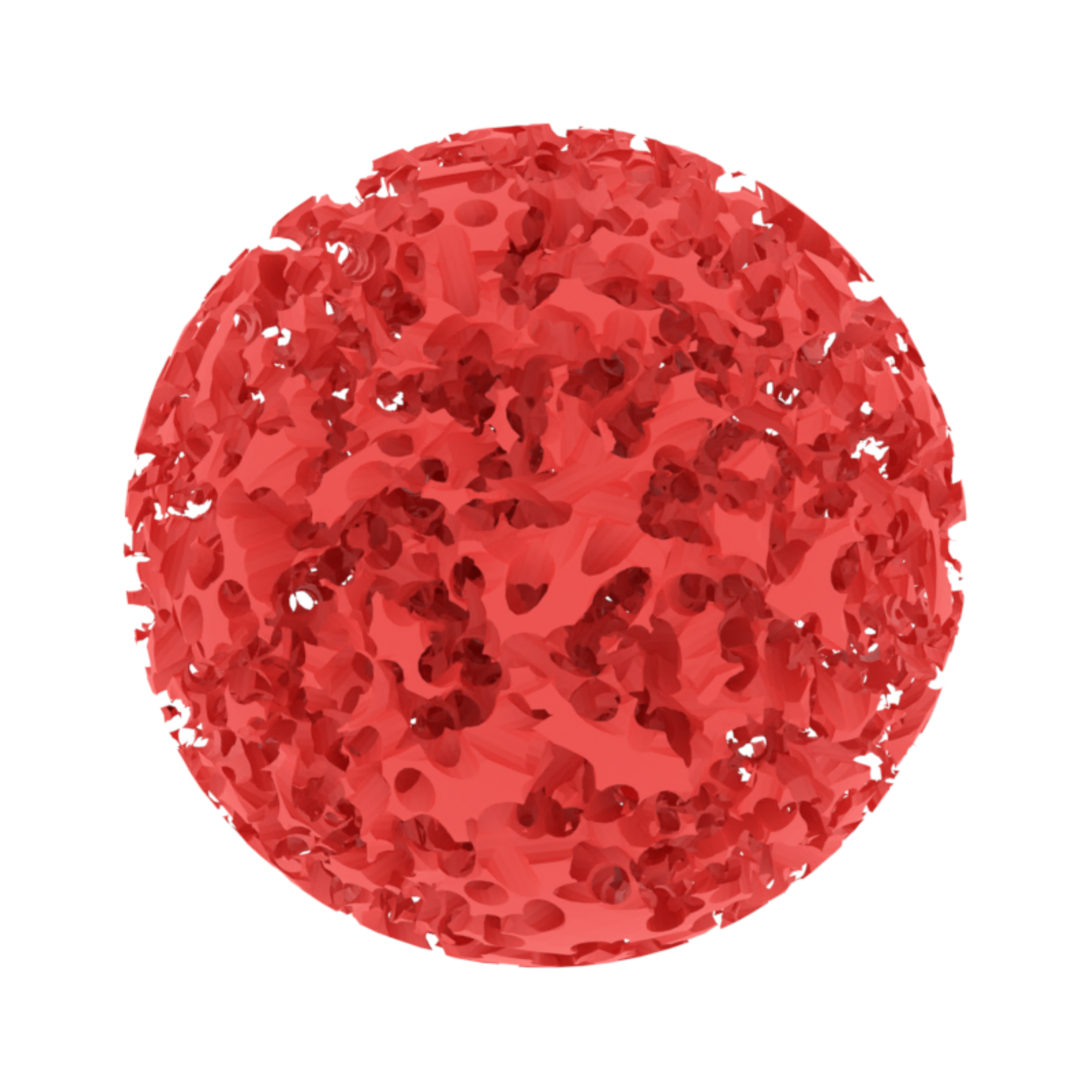}
      \label{f:process_sim_2}
  \end{subfigure}
  \caption{Simulations of the occupied ($u = 0.07$) and vacant ($u = 0.07$) sets of the Poisson cylinder process intersected with a ball of radius $24$.}
  \label{f:process_sim}
\end{figure}

In the original work \cite{TW10b}, the authors proved that the vacant set left after removing the cylinders undergoes a percolative phase transition for $d \geq 4$.
More precisely, for any dimensions $d \geq 3$ and for large enough intensity $u$, they prove that the vacant set does not percolate, while for $d \geq 4$ and $u$ small enough there is an unbounded connected component on the vacant set.
The existence of a percolative phase for the vacant set in $d = 3$ has been established in \cite{HST12}.

The above cited works make careful use of the weak decoupling inequalities that provide a polynomial decay of correlations present in the model.
More precisely, in Lemma~3.3 of \cite{TW10b}, the authors prove that for any functions $f$ and $g$ that only depend on the configuration of the cylinder set inside balls $B(x, r)$ and $B(y, r)$ respectively, we have
\begin{equation}
  \label{e:decouple_slow}
  \big| \mathbb{E}^u(f g) - \mathbb{E}^u(f) \mathbb{E}(g) \big| \leq c u \Big( \frac{(r + 1)^2}{|x - y|} \Big)^{d - 1}.
\end{equation}
The main weakness of the decoupling inequality \eqref{e:decouple_slow} is its slow decay of covariance which is related to the probability that the same cylinder hits the two balls $B_1$ and~$B_2$.

Although \cite{TW10b} and \cite{HST12} have successfully employed the above polynomial decay to establish the existence of a phase transition for the model (through detailed constructions), such weak decoupling does not allow us to prove more refined properties of the model as the ones we present in Sections~\ref{s:paths} and~\ref{s:flow}.

For other dependent percolation models such as Random Interlacements, better decorrelation bounds have been obtained that decay stretched exponentially, see \cite{zbMATH06509926}.
These bounds use a small sprinkling in the intensity of the process in order to blur (and effectively dominate) the dependence induced by objects that touch both balls.
In this article we employ a similar technique, but due to the rigidity of cylinders, we need to employ sprinklings both on the density of cylinders and in their radii, so that we are able to prove a decay that is faster than polynomial, see Theorem~\ref{thm:2boxdec_intro} below.

\bigskip

In order to state precisely our result, we have to introduce a notation for the cylinder set at intensity $u$ and radius $\rho$.
As mentioned earlier, the cylinder's process is governed by a Poisson Point Process on the space of lines in $\mathbb{R}^d$.
This process has an intensity $u \geq 0$ and can be written as $\omega = \sum_{i \geq 0} \delta_{l_i}$ where $l_i$ are lines in $\mathbb{R}^d$, see \eqref{e:mu} for more details.
Given this point measure, we define the cylinder's set with radius $\rho$ as
\begin{equation}
  \mathcal{C}^\rho_u = \mathcal{C}^\rho_u(\omega) = \bigcup_{i \geq 0} B(l_i, \rho),
\end{equation}
where $B(A, r)$ stands for the set of points within distance at most $\rho$ of the line $l_i$.

Given two balls $B_1(x_1, L)$ and $B_2(x_2, L)$, our main Theorem~\ref{thm:2boxdec_intro} below can be understood as controlling the dependence between what happens with the cylinder process at $B_1$ and $B_2$.
For this we will make a sprinkling in the intensity of the cylinders ($u \to u + \delta$) and on their radii ($\rho \to \rho + \eps$).

\begin{theorem}
  \label{thm:2boxdec_intro}
  There exists a constant~$\useiconst{c:2boxdec}>0$ depending only on the dimension~$d$ such that, for any~$\eps, \delta, \alpha \in (0,1)$ and $\rho\in[1,4]$, and any pair of increasing functions
  \begin{equation}
    f_i: \Omega \to [0,1], \text{ measurable with respect to $\sigma(\mathcal{C}^\rho_u \cap B_i)$}, \text{ for~$i = 1, 2$},
  \end{equation}
  if $|x_1 - x_2| \geq L^{2 + \alpha}/\eps$ we have
  \begin{equation}
    \label{e:2boxdec}
      \IE \big[ f_1 \big( \mathcal{C}^u_\rho  \big) f_2 \big( \mathcal{C}^u_\rho  \big) \big]
      \leq \;
       \IE \big[ f_1 \big( \mathcal{C}^u_{\rho + \eps}  \big) \big]
      \IE \big[ f_2 \big( \mathcal{C}^{u + \delta}_{\rho + \eps}  \big) \big]
       + c_2^{-1} \exp \big\{ -\useiconst{c:2boxdec} \delta \eps^{d-1} L^{\alpha(d-1)} \big\}.
  \end{equation}
  An analogous result for non-increasing functions also holds, see Theorem~\ref{thm:2boxdec}.
\end{theorem}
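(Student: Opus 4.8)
The plan is to decompose the Poisson process of lines according to whether a line comes close to the two balls or not, and to dominate the problematic ``bad'' lines -- those that hit both $B_1$ and $B_2$ -- by the sprinkling. Concretely, I would partition the space of lines into three (random, but measurably defined) classes: lines whose associated cylinder of radius $\rho$ meets $B_1$ but not $B_2$, lines whose cylinder meets $B_2$ but not $B_1$, and the ``long'' lines whose radius-$\rho$ cylinder meets both balls. By the restriction and independence properties of Poisson point processes, the three sub-clouds are independent. The functions $f_1$ and $f_2$ depend only on $\mathcal{C}^\rho_u \cap B_i$, so $f_1$ is a function of the first and third sub-clouds, $f_2$ of the second and third. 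If the third class were empty we would get exact product form; the whole game is therefore to absorb the third class into the error term, at the cost of the sprinklings $\eps$ in the radius and $\delta$ in the intensity.

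The key geometric input is a bound on the intensity of the long lines: a line whose $\rho$-neighbourhood meets both $B(x_1,L)$ and $B(x_2,L)$ must pass, up to distance $\rho\le 4$, through two balls of radius $L$ whose centres are at distance $D:=|x_1-x_2|\ge L^{2+\alpha}/\eps$ apart. The Haar measure of such lines scales like $(L/D)^{d-1}$ times a volume factor of order $L$, so their total intensity is at most of order $u\,(L+\rho)^{?}/D^{d-1}$; plugging in $D\ge L^{2+\alpha}/\eps$ and $\rho\le 4$ one sees this is comparable to $\eps^{d-1}L^{-\alpha(d-1)}$ up to constants. I would next argue that, with probability at least $1-c\exp\{-c\delta\eps^{d-1}L^{\alpha(d-1)}\}$, the configuration of long lines can be ``covered'' by the sprinkling: more precisely, one couples $\mathcal{C}^\rho_u$ with $\mathcal{C}^{\rho+\eps}_{u+\delta}$ so that on this high-probability event every long cylinder of radius $\rho$ in the original picture is contained in a cylinder of the sprinkled picture built from an \emph{independent} Poisson process inside $B_2$ -- using that enlarging the radius from $\rho$ to $\rho+\eps$ turns a long line of the original process, after a bounded perturbation of its parameters, into a configuration that the extra $\delta$-intensity of lines (localised to $B_2$, whose contribution there has density bounded below by a constant times $\delta$) can reproduce. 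Since $f_2$ is increasing, replacing $\mathcal{C}^\rho_u\cap B_2$ by this larger set can only increase $\IE[f_2]$, which is exactly why the sprinkled radius $\rho+\eps$ and intensity $u+\delta$ appear on the right-hand side of \eqref{e:2boxdec}.

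I would then assemble the estimate: condition on the cloud of long lines; on the good event the conditional law of $\mathcal{C}^\rho_u\cap B_1$ is stochastically dominated by $\mathcal{C}^{\rho+\eps}_{u}\cap B_1$ (one only has to add the long cylinders, and enlarging the radius covers them without any extra lines, using that $f_1$ is increasing and that $f_1$ need not be sprinkled in intensity), while the conditional law of $\mathcal{C}^\rho_u\cap B_2$ is dominated by the $(u+\delta,\rho+\eps)$ cloud restricted to $B_2$; crucially these two dominating clouds can be taken independent since their ``extra'' randomness lives in the disjoint regions $B_1$ and $B_2$ together with an independent $\delta$-sprinkle. Taking expectations gives $\IE[f_1f_2]\le \IE[f_1(\mathcal{C}^{\rho+\eps}_u)]\,\IE[f_2(\mathcal{C}^{\rho+\eps}_{u+\delta})]+\IP(\text{bad event})$, and the probability of the bad event is bounded by $c_2^{-1}\exp\{-\useiconst{c:2boxdec}\delta\eps^{d-1}L^{\alpha(d-1)}\}$ by a Poisson large-deviation (Chernoff) bound applied to the number of long lines together with the geometric intensity estimate above.

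The main obstacle I anticipate is the coupling in the second paragraph: it is not enough to know that long cylinders are rare, one must actually show that a long cylinder of radius $\rho$ can be \emph{realised inside $B_2$} by the sprinkled process $\mathcal{C}^{\rho+\eps}_{u+\delta}$ in an independent fashion. This requires a careful quantitative argument, roughly of the flavour of \cite{zbMATH06509926} but complicated by the rigidity of cylinders: a single line is an inflexible object, so one cannot simply ``bend'' it into $B_2$; instead one must exploit the extra radius $\eps$ to create a tube thick enough that a genuinely independent family of short lines, drawn with the $\delta$-intensity localised near $B_2$, fills it with high probability. Getting the parameter dependence right here -- that the cost is $\exp\{-c\delta\eps^{d-1}L^{\alpha(d-1)}\}$ rather than something weaker -- is the crux, and is precisely where the hypothesis $|x_1-x_2|\ge L^{2+\alpha}/\eps$ (with the $L^2$ rather than $L$, to beat the volume factor from the set of directions) gets used.
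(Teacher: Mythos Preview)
Your decomposition into three sub-clouds and the identification of the ``long'' lines as the carriers of dependence are exactly right, and match the paper's setup. However, there is a genuine gap in the coupling step. Your suggested mechanism---using the extra $\delta$-intensity to ``fill'' the trace of each long cylinder inside $B_2$ with an independent family of lines---cannot work as stated: lines are infinite rigid objects, and a long line's trace in $B_2$ is a \emph{deterministic} function of its trace in $B_1$, so no amount of independent sprinkling localised at $B_2$ will reproduce it. Moreover, a Chernoff bound on the \emph{number} of long lines (whose intensity is $\sim u\,\eps^{d-1}L^{-\alpha(d-1)}$) only yields the polynomial error $c\,u\,\eps^{d-1}L^{-\alpha(d-1)}$, i.e.\ the weak decoupling \eqref{e:decouple_slow}, not the stretched exponential in the statement.

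The missing idea is \emph{delocalisation by direction resampling}. The paper introduces stochastic operators $\Gamma_1,\Gamma_2$ which, for each long line, fix its intersection point with the hyperplane $\Pi_i$ facing $B_i$ and resample its direction uniformly inside a spherical cap of angular radius $\sim\eps/L$. The $\eps$-thickening is used precisely here: it guarantees that this angular wiggle, restricted to the box $B_i$ of side $L$, moves the cylinder by at most $\eps$, so the thickened cylinder still covers the original (this is the paper's Lemma~\ref{l:bend}). The point is that after applying $\Gamma_1$, the line's landing point on $\Pi_2$ is spread over a region of diameter $\sim (\eps/L)\cdot L^{2+\alpha}/\eps = L^{1+\alpha}$, so its conditional density there is at most $cL^{-(1+\alpha)(d-1)}$ times Lebesgue (Lemma~\ref{l:theta}). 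Composing with $\Gamma_2$ makes the direction at $B_2$ fresh as well. Now a poissonisation argument shows that, provided the number of long lines is at most $N\sim\delta L^{\alpha(d-1)}$, their $\Gamma_2\circ\Gamma_1$-images inside $B_2$ are stochastically dominated by an \emph{independent} $\PLP(\delta\mu)$; the bad event $\{\#\text{long lines}>N\}$, for a Poisson variable of mean $\sim u\eps^{d-1}$, has probability $\exp\{-c\,\delta\eps^{d-1}L^{\alpha(d-1)}\}$, which is the error term. So the role of the distance hypothesis $|x_1-x_2|\ge L^{2+\alpha}/\eps$ is not merely to make long lines rare, but to convert the $\eps/L$ angular freedom into an $L^{1+\alpha}$ spatial spread at the far box, which is what produces the factor $L^{\alpha(d-1)}$ inside the exponential.
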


\begin{remark}
  This is a good point to make a few observations.
  \begin{enumerate}[\qquad a)]
  \item The upper bound presented in Theorem~\ref{thm:2boxdec_intro} is the most relevant part of the decoupling, since the corresponding lower bound holds trivially without any error due to the FKG inequality.
  \item Note that unlike \eqref{e:decouple_slow}, we have a control over the dependencies that decays as a stretched exponential, instead of as a polynomial.
  \item Inequalities that are very similar to the one presented in Theorem~\ref{thm:2boxdec_intro} have been previously established for models such as Random Interlacements \cite{zbMATH06509926}, Gaussian Free Field \cite{popov2015decoupling} and Random Walk Loop Soup \cite{AS18}.
    And although such results have proven themselves to be very useful in studying the underlying models \cite{RS12, DRS12, T10, duminil2020equality, drewitz2014chemical}, the techniques developed so far could not be adapted to cylinders' percolation due to the rigidity of cylinder's themselves.
  \item Note that all of the above mentioned decoupling inequalities (in \cite{zbMATH06509926,popov2015decoupling,drewitz2014chemical}) involve a sprinkling $u \to u + \delta$, similar to the one we employ in our main result.
    However, in the current article we also employ a second sprinkling (with respect to the radii of the cylinders from $\rho \to \rho + \eps$) which is crucial to deal with the rigidity of these objects.
  \item As an indication of how heavy the dependencies induced by the Poisson Cylinder's model are, it is instructive to observe the effect of conditioning the process on its trace inside a box.
  In this case, one would be able to extrapolate indefinitely the cylinders that touch the box, effectively obtaining an infinite-range information about the process on the remainder of $\mathbb{R}^d$.
  \end{enumerate}
\end{remark}

\bigskip

Although the above remark mentions that decoupling inequalities have proved themselves useful in the study of other models, we felt that presenting Theorem~\ref{thm:2boxdec_intro} without any applications would feel too abstract for the readers.
For this reason we have decided to include one interesting application of Theorem~\ref{thm:2boxdec_intro} to the study of a random walk on the vacant set left by this soup of cylinders.

We denote the vacant set left by random cylinders by $\mathcal{V}^\rho_u = \mathbb{R}^d \setminus \mathcal{C}^\rho_u$.
As mentioned previously, this set undergoes a percolation phase transition as we vary $u$, in particular for small enough values of $u > 0$, $\mathcal{V}^\rho_u$ contains almost surely an unbounded connected component.
This result has been specially difficult to establish for $d = 3$, requiring a separate article \cite{HST12} and the proof strongly relies on planarity arguments, since the infinite connected component is constructed inside a two-dimensional surface.

Given the above difficulties, we have decided to focus this article in a question that is inherently non-planar.
More precisely whether a random walk on the infinite component of $\mathcal{V}^1_u$ is transient or not.
This is the content of the following theorem.

\begin{theorem}
  \label{t:transient}
  For any $d \geq 3$, one endows the set $\mathbb{Z}^d$ with nearest neighbor edges $\mathcal{E} = \big\{ \{x, y\}; |x - y| = 1 \big\}$ and consider the random subset of edges
  \begin{equation*}
    \mathcal{E}' := \bigg\{
    \begin{aligned}
    \{x, y\} \in \mathcal{E}; &
      \text{the whole line segment connecting $x$ to $y$ }
      \\
      & \text{is contained in $\mathcal{V}^1_u$}
    \end{aligned}
    \bigg\}.
  \end{equation*}
  Then for $u$ small enough depending only on the dimension, the graph $(\mathbb{Z}^d, \mathcal{E}')$ contains a connected component that is transient for the simple random walk.
\end{theorem}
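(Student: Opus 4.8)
The plan is to use the classical criterion that a locally finite connected graph is transient for the simple random walk as soon as it carries a unit flow to infinity of finite energy, and to produce such a flow on one connected component of $(\Z^d,\EE')$ by a multi-scale renormalization driven by the decoupling of Theorem~\ref{thm:2boxdec_intro}. This splits naturally into the two halves corresponding to Sections~\ref{s:paths} and~\ref{s:flow}: first produce, $\IP$-almost surely, an infinite hierarchically organized ``good'' substructure of $\mathcal{V}^1_u$; then build the flow on it and transfer it to $\Z^d$.

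\emph{The renormalization.} Fix summable sprinkling sequences, say $\eps_n=\delta_n=c\,2^{-n}$ with $\sum_n\eps_n\le 3$, a target intensity $u^\ast$, and set $\rho_n=1+\sum_{k>n}\eps_k\downarrow 1$ and $u_n=u^\ast+\sum_{k>n}\delta_k\downarrow u^\ast$, so that $\rho_n\in[1,4]$ throughout. Choose a rapidly (doubly exponentially) growing sequence of scales $L_n$, so that inside a box of side $L_n$ there is room for pairs of sub-boxes of side $L_{n-1}$ at mutual distance at least $L_{n-1}^{2+\alpha}/\eps_n$, as required by Theorem~\ref{thm:2boxdec_intro}. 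Declare a box of side $L_n$ \emph{good} if the set of edges $\EE'$ associated with $\mathcal{V}^{\rho_n}_{u_n}$ inside a slight enlargement of the box contains a crossing cluster joining all pairs of opposite faces, this cluster is essentially unique, it is connected to the crossing clusters of the good scale-$L_{n-1}$ sub-boxes it meets, and it has graph diameter (hence effective resistance) at most polynomial in $L_n$ between prescribed ports near the faces. Being bad in this sense is an increasing event in the cylinder configuration, so, following the standard cascading construction, one sets $A^{(n)}=$ ``the box contains two sub-boxes at distance $\ge L_{n-1}^{2+\alpha}/\eps_n$ that are both cascading-bad at scale $n-1$'', checks that a non-good box triggers $A^{(n)}$, and extracts from Theorem~\ref{thm:2boxdec_intro} the recursion
\[
   \IP\big[A^{(n)}(\mathcal{V}^{\rho_n}_{u_n})\big]\;\le\;C\,(L_n/L_{n-1})^{2d}\,\IP\big[A^{(n-1)}(\mathcal{V}^{\rho_{n-1}}_{u_{n-1}})\big]^2\;+\;C\exp\{-\useiconst{c:2boxdec}\,\delta_n\eps_n^{\,d-1}L_n^{\alpha(d-1)}\},
\]
where the parameter shift from $(u_n,\rho_n)$ to $(u_n+\delta_n,\rho_n+\eps_n)$ caused by the two sprinklings is absorbed exactly by our choice $u_{n-1}\ge u_n+\delta_n$, $\rho_{n-1}=\rho_n+\eps_n$. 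With $L_n$ growing fast enough the stretched-exponential error dominates the combinatorial factor, and a standard iteration gives $\IP[A^{(n)}(\mathcal{V}^{\rho_n}_{u_n})]\to 0$ once the base probability $\IP[A^{(0)}(\mathcal{V}^{\rho_0}_{u_0})]$ is small; the latter holds once $L_0$ is large and $u^\ast$ is small, because then with probability close to $1$ no cylinder meets a fixed $L_0$-box and the box is trivially good. Since $\rho_n\ge 1$ and $u_n\ge u^\ast$ we have $\mathcal{V}^1_{u^\ast}\supseteq\mathcal{V}^{\rho_n}_{u_n}$, and as $A^{(n)}$ is decreasing in the vacant set this also gives $\IP[A^{(n)}(\mathcal{V}^1_{u^\ast})]\to 0$. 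A Borel--Cantelli argument then yields, $\IP$-almost surely, an infinite connected good substructure $\mathcal{G}\subseteq\EE'$ of $\mathcal{V}^1_{u^\ast}$ that carries its crossing clusters coherently across all scales; this is the content of the paths section.

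\emph{The flow.} Organize $\mathcal{G}$ as a rooted tree $\T$: the root is a good box at some fixed large scale $L_N$, and the children of a good scale-$n$ box are the good scale-$(n-1)$ sub-boxes whose crossing clusters it is connected to; by sparseness of the bad sub-boxes every vertex of $\T$ has at least $(L_n/L_{n-1})^d/2\ge 2$ children. Put a unit of flow at the root and split it equally among the children at every level; since the branching is at least $2$ while each edge of $\T$ is realized by a path in $\mathcal{G}$ of length (and effective resistance) at most polynomial in $L_N$, the resulting unit flow to infinity on $\T$ has finite energy, the depth-$k$ layer contributing at most $\mathrm{poly}(L_N)\,2^{-k}$. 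Pushing this flow forward — routing along the realizing paths through the crossing clusters — gives a genuine unit flow to infinity on a connected component of $(\Z^d,\EE')$, and because good boxes of a common scale are spatially disjoint, the realizing paths of a given scale are disjoint, so a routine superposition estimate bounds the energy of the transferred flow by a constant times that of the tree flow. Hence the component is transient. (Alternatively one can note that $\mathcal{G}$ contains the infinite cluster of a very supercritical percolation on a renormalized lattice and invoke its transience, but the direct flow construction is self-contained.)

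\emph{Main obstacle.} The crux is the joint bookkeeping in the renormalization. The decoupling of Theorem~\ref{thm:2boxdec_intro} is not an exact stochastic domination: it comes with \emph{two} sprinklings — in the intensity \emph{and} in the radius of the cylinders — and with the geometric separation requirement $|x_1-x_2|\ge L^{2+\alpha}/\eps$. One must choose the scales $L_n$, the sprinkling series $(\eps_n),(\delta_n)$ and the very definition of ``good box'' so that, simultaneously, the total sprinkling is finite (so the limiting parameters are the genuine pair $(\rho,u)=(1,u^\ast)$ of the theorem), the separation constraint is satisfiable inside each coarse box, the error $\exp\{-c\,\delta_n\eps_n^{d-1}L_n^{\alpha(d-1)}\}$ beats the combinatorial explosion $(L_n/L_{n-1})^{2d}$ in the recursion, and — most delicately — the surviving ``good'' event is strong enough to certify not just a crossing of the box but the cross-scale connectivity of crossing clusters together with a polynomially bounded internal resistance, both indispensable for the flow to have finite energy. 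Designing ``good box'' to meet all of these demands at once, and in particular controlling the geometry of the vacant crossing clusters, is where the real work lies; this is especially delicate for $d=3$, where these clusters are essentially one-dimensional and fragile.
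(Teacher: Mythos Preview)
Your recursion built on the two-box decoupling cannot close, and this is not a matter of choosing the scales more cleverly: the paper states this explicitly at the opening of Section~\ref{s:3box}. The separation hypothesis of Theorem~\ref{thm:2boxdec_intro} forces $L_n\ge L_{n-1}^{2+\alpha}/\eps_n$, so the combinatorial factor in your recursion is at least $(L_n/L_{n-1})^{2d}\ge L_{n-1}^{2d(1+\alpha)}$, whereas squaring only doubles the exponent of $p_{n-1}$. If you try $p_n\le L_n^{-M}$ you need $2d(1+\alpha)-2M\le -M(2+\alpha)$, i.e.\ $2d(1+\alpha)\le -M\alpha$, which is impossible; and if you try $p_n\le\exp(-(\log L_n)^{1+\delta})$ you need $2\ge (2+\alpha)^{1+\delta}$, again impossible. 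No choice of decay profile survives, because the growth exponent of the scales ($>2$) exceeds the gain from squaring ($=2$). The paper resolves this by proving a genuinely new \emph{three-box} decoupling (Theorem~\ref{thm:3boxdec}) for triples of boxes that are both well separated \emph{and} sufficiently unaligned; the key observation (Lemma~\ref{l:unalignedboxes}) is that the two families of ``problematic'' lines joining $B_1$ to $B_2$ and $B_1$ to $B_3$ are disjoint, so the sprinklings can be done independently. This yields $p_k\le c(L_k/L_{k-1})^{3d}\,p_{k-1}^3+\text{error}$, and now cubing (exponent $3$) beats the scale growth (exponent $2+\alpha+\beta<3$), exactly as in \eqref{eq:pkdec}--\eqref{eq:pkind}. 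The definition of a bad box at scale $k$ is tailored to this: three bad sub-boxes that are far apart \emph{and} unaligned (Definition~\ref{def:kbadbox}).

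Your flow construction is also too coarse to give finite energy. The paper does \emph{not} route a tree flow through arbitrary crossing clusters; the payoff of the three-box definition is that a good box contains at most a thin cylinder-shaped defect (Lemma~\ref{l:khole}), which allows a very concrete construction: on each good face one selects a hierarchical ``$k$-fractal'' of size $\asymp (L_k/L_0)^{d-1}$ (Definitions~\ref{def:0fractal}--\ref{def:kfractal}), and inside each good $k$-box one builds explicit bundles of $\asymp (L_k/L_{k-1})^{d-1}$ \emph{vertex-disjoint} paths of length $O(L_k/L_{k-1})$ between the chosen sub-fractals (Lemmas~\ref{l:17path}--\ref{l:kminus1paths}). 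These two features---many disjoint paths spreading the flow, and short paths---are what drive the energy recursion $\mathrm{Energy}_k\le c(L_{k-1}/L_k)^{d-2}\,\mathrm{Energy}_{k-1}$ in \eqref{eq:energykbound}; your ``split equally among $\ge 2$ children along a polynomial-length path'' gives contributions that do not even decay across scales. In particular, for $d=3$ the margin $d-2=1$ is thin and one genuinely needs the full width of disjoint paths, not merely two.
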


\begin{remark}
  \label{r:rw}
  Observe that the above result gives in particular the existence of an unbounded connected component of $\mathcal{V}^1_u$, as previously proved in \cite{HST12}.

  Theorem~\ref{t:transient} is stated in terms of a random walk (instead of a diffusion) to avoid technicalities involved in the construction and analysis of the Brownian Motion in the presence of potentially complex boundaries, see Remark~\ref{r:discrete}.

  It is also interesting to note that the transience of the simple random walk is an intrinsically non-planar property.
  This is the reason why we have chosen to present this result that does not rely on planarity as \cite{HST12}.
\end{remark}

\paragraph{Previous results on the model}

As mentioned earlier, the Poisson Cylinder's process was introduced in \cite{TW10b}, where a phase transition for the percolation of its vacant set was proved for all $d \geq 4$.
Later in \cite{HST12} the phase transition for $d = 3$ was established in a slab.
Since then, the model has been studied and extended in various directions.

The connectivity of the occupied set was proved in \cite{10.1214/14-AIHP641}, while a shape theorem was obtained in \cite{hilario2019shape}.
Cylinder models have been constructed in the hyperbolic space \cite{10.1214/EJP.v20-3645} and with axes that are parallel to the Euclidean basis \cite{hilario2011coordinate,hilario2019bernoulli}.
A fractal version of the cylinder's percolation model was presented in \cite{broman2020fractal}.
Also the intersection of cylinder's percolation with a plane gives rise to a random collection of stretched ellipses, whose more in depth exploration was done in \cite{teixeira2017ellipses}.

\paragraph{Overview of the proofs}

Let us now give a brief description of the proof of Theorem~\ref{thm:2boxdec_intro}.
Observe first that one can focus on the cylinders that intersect both boxes $B_1$ and $B_2$, since these are the cylinders that can carry dependence between them.

Roughly speaking, we will ``perturb'' each such cylinder $B(l_i, \rho)$, by first making them slightly thicker $B(l_i, \rho + \epsilon)$, as in the statement of Theorem~\ref{thm:2boxdec_intro}.
The most important observation at this point is that this thickening allows us to change slightly the original cylinder's direction (say from $B(l_i, \rho)$ to $B(l'_i, \rho)$), while still guaranteeing that $B(l_i', \rho) \cap B_1 \subseteq B(l_i, \rho + \epsilon)$.

This directional perturbation (together with the fact that the two boxes are well separated) is sufficient to make sure that the landing point of $B(l'_i, \rho)$ in $B_2$ is very delocalized.
Therefore, the process of ``perturbed'' cylinders viewed from $B_2$ looks indistinguishable from an independent cloud of random cylinders.
At this point we sprinkle the intensity $u$ of the process in order to dominate this cloud in $B_2$, finishing the proof of Theorem~\ref{thm:2boxdec_intro}.

The proof of Theorem~\ref{t:transient} follows a classical argument by Thompson, that provides a systematic way to prove transience of a simple random walk on a graph by building a finite energy flow from the origin to infinity.
The construction of this flow follows a multi-scale argument, since this technique is very well suited to the decoupling inequalities that we established before.

It is important to notice that Theorem~\ref{thm:2boxdec_intro} is not strong enough to be used in the renormalization schemes that we employ in our applications.
Therefore we prove a slightly strengthened version of Theorem~\ref{thm:2boxdec_intro} in  Section~\ref{s:3box}, see Theorem~\ref{thm:3boxdec}.

\paragraph{Open problems}

We believe that several questions for percolation of cylinders have been left unanswered because of a lack of a fast decoupling inequality like the one presented in Theorem~\ref{thm:2boxdec_intro}.

For this reason we list here some of the directions for which research in this model may now advance in the form of a list of open questions, all concerning the phase $u > 0$ small enough:
\begin{enumerate}[\qquad a)]
\item Is there a unique unbounded component for the vacant set left by cylinders?
\item Can we control on the radius of $C_0$ (the cluster of $\mathcal{V}^1_u$ containing the origin)? More precisely, can one prove a decay for $\mathbb{P}[C_0 \not \subset B(0, r), C_0 \text{ bounded}]$?
\item Can one establish quantitative bounds for the time constant of the first passage percolation on $\mathcal{V}^1_u$?
\item Does a Functional Central Limit Theorem hold for the Brownian motion on~$\mathcal{V}^1_u$?
\item Is it true that the phase transition for percolation on the Poisson Cylinder's model is sharp? This has been established for strongly dependent percolation models such as level sets of the Gaussian Free Field \cite{duminil2020equality} and Random Interlacements \cite{ri_sharp}.
\end{enumerate}
Although all of the above problems require new ideas and techniques to be solved, we believe that the present work will make these questions more approachable and appealing for future works.

\paragraph{Organization of the paper}

This paper is organized as follows.
In Section~\ref{s:preliminaries}, we introduce the basic notation and the definition of the Poisson Cylinder's model, finishing with proofs for some of its basic properties.
Our main decoupling inequality Theorem~\ref{thm:2boxdec_intro} is re-stated and proved it Section~\ref{s:2box}.
Section~\ref{s:3box} is dedicated to extending our main theorem to three boxes, which is surprisingly necessary in order to prove our main applications.
Finally, Sections~\ref{s:renorm}, \ref{s:paths} and \ref{s:flow} respectively: presents our main renormalization scheme, constructs the paths and builds the flows that culminate in the proof of Theorem~\ref{t:transient}.

\paragraph{A word about constants}
Throughout the text, the unnumbered letter~$c$ will denote a positive constant depending only on the dimension, its value could change from line to line. Numbered letters $c_0,c_1\dots$ are also positive constants, but their values are fixed on their first use in the text.

\paragraph{Acknowledgments}
During this research, AT has been supported by grants ``Projeto Universal'' (406250/2016-2) and ``Produtividade em Pesquisa'' (304437/2018-2) from CNPq and ``Jovem Cientista do Nosso Estado'', (202.716/2018) from FAPERJ. CA was supported by the FAPESP grant 2013/24928, the Noise-Sensitivity Everywhere ERC Consolidator Grant 772466, and the DFG Grant SA 3465/1-1.

\section{Preliminaries}
\label{s:preliminaries}

We begin this section with the basic notation that will be used throughout this paper.
We write $\mathbb{N}$ for the set $\{0, 1, 2, \dots\}$. Let $d\geq 3$ be a fixed integer. We let $|\cdot|$ denote the Euclidean norm on $\R^d$. Given $r>0$ and $x\in\R^d$, we define~$B(x,r)$ as the closed Euclidean ball of radius~$r$ centered at~$x$ and~$B_\infty(x,r)$ as the closed ball in the $l_\infty$-norm with same center and radius. Given $A,B\subset\R^d$ we define
\begin{equation*}
\dist(A,B):=\inf\{|x-y|:x\in A,y\in B\},
\end{equation*}
the Euclidean distance between~$A$ and~$B$, and
\begin{equation*}
B(A,r):=\bigcup_{x\in A} B(x,r),
\end{equation*}
the set of all points with distance at most~$r$ from~$A$.

\subsection{The Poisson cylinder process}
\label{ss:definition_model}

Regarding~$\R^k$, for some $k\geq 0$, we denote its canonical basis by~${\bf e}_1,\dots,{\bf e}_k$, its typical element by $(x_1,\dots,x_k)$, its Borel $\sigma$-algebra by $\mathcal{B}(\R^k)$ and its Lebesgue measure by~$\d v_k$.
We let $\nu$ denote the unique normalized Haar measure of~$SO_d$, the topological group of rigid rotations of~$\R^d$.

Let us now give a overview of the definition of the Poisson cylinder percolation process on $\mathbb{R}^d$ (defined in Section~$2$ of~\cite{TW10b}), a more detailed description will be presented later.
Define the set~$\LL$ of lines (or affine Grassmanian of $1$-dimensional affine spaces) of~$\R^d$. We start with a Poisson point process in that plane $\R^{d-1}\times\{0\}\subsetneq\R^d$ with intensity $u\d v_{d-1}$, where~$u$ is a positive real number. Through each of the points of the process we draw a line orthogonal to the plane. We then sample an element of~$SO_d$ according to~$\nu$ independently for each line. Finally, to each line we apply its associated random rotation around the origin of~$\R^d$. The resulting random subset of~$\LL$ is stationary under translations and rotations of~$\R^d$. By considering this set of lines as a subset of~$\R^d$, and then viewing each line as the axis of a cylinder with radius~$1$, we arrive at the definition of the cylinder set.

In more rigorous terms, given $x\in\R^{d-1}$, we let
\begin{equation*}
  \tau_x: \mathbb{R}^{d-1} \to \R^{d-1} \text{ defined through } \tau_x(y) = x + y
\end{equation*}
denote the translation by~$x$ in~$\mathbb{R}^{d-1}$. We identify~$\R^{d-1}$ with~$\R^{d-1}\times\{0\}$ and consider~$SO_d$ endowed with its natural topology. We then consider the function
\begin{equation}
  \lambda: \mathbb{R}^{d-1} \times SO_d \to \mathbb{L} \text{ that takes }
(x, \Gamma) \text{ and maps to } \Gamma(\tau_x(\{t {\bf e}_d:t\in\R\})),
\end{equation}
and the finest topology on~$\LL$ that makes~$\lambda$ continuous. We construct from this topology the $\sigma$-algebra~$\mathcal{B}(\LL)$ of borelian sets of~$\LL$. We also use the pushforward~$\lambda_{*}$ associated to~$\lambda$ to define the measure
\begin{equation}
\label{e:mu}
\mu = \lambda_{*}(\d v_{d-1} \otimes \nu)
\end{equation}
on $(\LL,\mathcal{B}(\LL))$. We introduce the space of locally finite point measures on~$\LL\times\R_+$:
\begin{equation}
  \Omega  = \Bigg\{ \sum_{i \geq 0} \delta_{(l_i,u_i)};
  \begin{array}{c} \;(l_i,u_i)\in \mathbb{L}\times\R_+ \text{ and }
    \sum_{i \geq 0} \delta_{(l_i,u_i)}(A) < \infty, \\ \text{ for every compact $A \in \mathcal{B}(\mathbb{L}\times\R_+)$}
  \end{array}
  \Bigg\},
\end{equation}
endowed with the $\sigma$-algebra $\mathcal{E}$ generated by the evaluation maps
\[
\varphi_A:\sum_{i \geq 0} \delta_{(l_i,u_i)}\in\Omega\mapsto \sum_{i \geq 0} \delta_{(l_i,u_i)}(A)\in\Z.
\]

We are now able to construct the space $(\Omega,\mathcal{E},\IP)$ of the Poisson point process on $\LL\times \R_+$ with intensity measure~$\mu\otimes \d v_1$, where $\d v_1$ denotes the Lebesgue measure on~$\R_+$. In particular, for~$u\geq 0$, we consider the restriction of said Poisson point process to~$\Omega\times [0,u]$, denoting its distribution as~$\PLP (u\mu)$. In what follows we let~$\omega$ be distributed according to~$\PLP (u\mu)$, and we will frequently identify~$\omega$ with its associated unlabeled set of lines in~$\mathbb{L}$. The cylinder set (with radius~$1$ and intensity~$u$) is then defined as
\begin{equation}
\C_u=\C_u(\omega):=\bigcup_{i;u_i\leq u} B(l_i,1).
\end{equation}
The corresponding vacant set is defined as
\begin{equation}
\mathcal{V}_u=\mathcal{V}_u(\omega):=\R^d\setminus\C_u.
\end{equation}

It will be important for us to define cylinder sets with different radii. Given $\rho>0$, we define
\begin{equation}
\C_u^\rho=\C_u^\rho(\omega):=\bigcup_{l_i\in\omega} B(l_i,\rho),
\end{equation}
the cylinder set with intensity~$u$ and radius~$\rho$. The complementary vacant set is defined analogously:
\begin{equation}
\mathcal{V}_u^\rho:=\R^d\setminus\C_u^\rho.
\end{equation}
The probability measure and expectation associated with these random sets will be denoted by $\IP_u^\rho$ and $\IE_u^\rho$, respectively. When the intensity of the process and radius of the cylinders are clear from the context, or when speaking of the measure which couples all the processes together on $\mathbb{L} \times \mathbb{R}_+$, we will drop the indexes, using simply~$\IP$ and~$\IE$.

Given a bounded measurable set~$A\subset\R^d$, let~$\mathcal{N}_{A}^{u,\rho}$ denote the number of cylinders of~$\C_u^\rho$ intersecting~$A$. We write
\begin{equation}
\label{eq:maurhodef}
\mathcal{M}_A^{u,\rho}(\omega):=\left( \mathcal{C}_u^\rho(\omega) \cap A, \mathcal{N}_{A}^{u,\rho}(\omega)   \right)
\end{equation}
for the variable enconding both the cylinder set intersected with~$A$ and the number of cylinders intersecting~$A$. Given~$\mathcal{B}(A) \times \N$, the set of Borelian subsets of~$A$ times~$\N$, we consider the partial order~$\preceq$ which, for $B,B'\in\mathcal{B}(A)$ and~$m, m' \in \N$, yields
\[
(B,m)\preceq (B',m')\iff B\subseteq B'\text{ and } m\leq m'.
\]
We say that a variable~$f$, measurable with respect to
\[
\sigma(\{\mathcal{M}_{A}^{u,\rho}(\omega);u,\rho\in\R_+\}),
\]
is \emph{increasing} if for~$u,u',\rho,\rho'\in\R_+$ and for different realizations $\omega,\omega'$ of the Poisson line process such that~$\mathcal{M}_{A}^{u,\rho}(\omega)\preceq \mathcal{M}_{A}^{u',\rho'}(\omega')$, we have~$f(\mathcal{M}_{A}^{u,\rho}(\omega))\leq f(\mathcal{M}_{A}^{u',\rho'}(\omega'))$. We say~$g$ is \emph{decreasing} if~$-g$ is increasing.

Though certainly useful, this previous characterization of the Poisson cylinder process will not satisfy our needs completely. It will be crucial to have a characterization that explicitly gives the intersection point of each cylinder axis in~$\LL$ with a given hyperplane, as well as the direction of each axis when viewed from its associated intersection point. With this in mind, we define the set of lines of~$\LL$ which are not contained in any of the planes parallel to $\R^{d-1}\times\{0\}$,
\begin{equation*}
\LL^*:=\LL\setminus\bigcup_{z\in\R}\left\{l\in\LL:l\subset\R^{d-1}\times\{z\}\right\},
\end{equation*}
which has total~$\mu$-measure. We also define the ``northern hemisphere'' of~$S^{d-1}$:
\begin{equation*}
\mathbb{D}:=\left\{  w\in\R^d:|w|=1, \langle w,{\bf e}_d \rangle > 0   \right\}.
\end{equation*}
We can unequivocally associate to each line~$l\in\LL^*$ its intersection point with $\R^{d-1}\times\{0\}$, denoted by~$p(l)$, and its direction~$d(l)\in \mathbb{D}$ when viewed from~$p(l)$. The function
\begin{equation}
  \label{e:2ndplpdef}
  \xi: \LL^* \to (\mathbb{R}^{d-1}\times\{0\})\times \mathbb{D} \text{ defined through } \xi(l) = (p(l),d(l))
\end{equation}
is clearly a bijection. Using the underlying measure structure inherited from~$\R^d$, we introduce in~$\mathbb{D}$ the probability measure~$\chi$ defined by
\newiconst{c:phi}
\newiconst{c:mu}
\begin{equation}
\label{e:defspheremes}
\chi (A):=\useiconst{c:phi}\int_A \langle w,{\bf e}_d \rangle  \sigma(\d w),
\end{equation}
for every measurable set $A\subseteq \mathbb{D}$, where~$\useiconst{c:phi}>0$ is a normalizing constant and~$\sigma$ is the Lebesgue measure on the sphere~$S^{d-1}\supset\mathbb{D}$. We can then use the bijection~$\xi$ to construct a new probability measure on~$\LL^*$:
\begin{equation*}
\tilde{\mu}:=\xi^{-1}_*(\d v_{d-1} \otimes \chi).
\end{equation*}
Since $\tilde{\mu}(\LL\setminus\LL^*)=0$, we can extend the measure~$\tilde{\mu}$ to the whole set~$\LL$ without any trouble. Using Proposition~$2.2.1$ of~\cite{teseDaniel} we can then see that, up to a constant factor, $\mu$ and~$\tilde{\mu}$ are equal: there exists a constant $\useiconst{c:mu}>0$ such that
\begin{equation}
\label{e:tildemu}
\mu=\uc{c:mu}\tilde{\mu}.
\end{equation}
Then by basic properties of the Poisson point process, we have
\begin{lemma}
\label{l:danielplpsampling}
We can regard any~$\omega\eqd\PLP(u\mu)$ as being sampled in the following way:
\begin{itemize}
\item[(i)]Sample a Poisson point process~$\sum_{i} \delta_{x_i}$ in~$\R^{d-1}\times\{0\}$ with intensity measure given by~$u\uc{c:mu}\d v_{d-1}$

\item[(ii)]Independently for each point~$x_i$ sampled by the above process, sample a vector~$d_i\in \mathbb{D}$ according to the measure~$\chi$.

\item[(iii)] For each~$x_i$, consider the line passing through~$x_i$ with direction~$d_i$ relative to the plane~$\R^{d-1}\times\{0\}$.

\item[(iv)]The resulting collection of lines will have the desired distribution.
\end{itemize}
\end{lemma}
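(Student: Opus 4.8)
The plan is to identify the two descriptions of $\PLP(u\mu)$ via the mapping theorem for Poisson point processes combined with the change-of-measure identity~\eqref{e:tildemu}. First I would record the general principle: if $\eta = \sum_i \delta_{y_i}$ is a Poisson point process on a space $Y$ with intensity $\kappa$, and $\psi: Y \to Z$ is measurable, then $\psi_*\eta = \sum_i \delta_{\psi(y_i)}$ is a Poisson point process on $Z$ with intensity $\psi_*\kappa$; moreover, if one enriches each point $y_i$ with an independent mark drawn from a probability kernel, the resulting marked process is Poisson with the obvious product-type intensity. Both of these are standard (see e.g.\ the references on Poisson processes) and I would simply cite them.

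The core of the argument is then a single line of bookkeeping. By~\eqref{e:tildemu} we have $u\mu = u\uc{c:mu}\tilde\mu$, and by the definition of $\tilde\mu$ as $\xi^{-1}_*(\d v_{d-1}\otimes\chi)$, sampling a line according to the intensity $u\mu$ is the same, under the bijection $\xi$ of~\eqref{e:2ndplpdef}, as sampling a pair $(p,d) \in (\R^{d-1}\times\{0\})\times\mathbb{D}$ from the intensity $u\uc{c:mu}(\d v_{d-1}\otimes\chi)$. By the marking/product structure just recalled, a Poisson point process with this intensity on $(\R^{d-1}\times\{0\})\times\mathbb{D}$ is obtained by: (i) sampling the first coordinates as a Poisson point process on $\R^{d-1}\times\{0\}$ with intensity $u\uc{c:mu}\d v_{d-1}$, since $\chi$ is a probability measure and hence integrates out to give the correct marginal intensity; and (ii) attaching to each point $x_i$ an independent $\mathbb{D}$-valued mark $d_i \sim \chi$. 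Pushing forward through $\xi^{-1}$, step (iii) recovers the line through $x_i$ with direction $d_i$, so the collection of lines in step (iv) is a Poisson point process on $\LL^*$ (equivalently on $\LL$, since $\mu(\LL\setminus\LL^*)=0$) with intensity $u\mu$, which is precisely $\PLP(u\mu)$ by definition.

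I do not anticipate a serious obstacle here; the statement is essentially a repackaging of~\eqref{e:tildemu} through the functorial properties of Poisson processes under measurable maps and independent marking. The only point requiring a modicum of care is the measurability of $\xi$ and $\xi^{-1}$ with respect to $\mathcal{B}(\LL^*)$ and the product $\sigma$-algebra on $(\R^{d-1}\times\{0\})\times\mathbb{D}$ — this is where the clause ``$\xi$ is clearly a bijection'' from the construction preceding~\eqref{e:2ndplpdef} is used, together with the fact that the topology on $\LL$ was defined precisely so that the relevant coordinate maps are continuous — and the harmless extension of $\tilde\mu$ across the null set $\LL\setminus\LL^*$, which does not affect the law of the point process. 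With these remarks in place the four bullet points are immediate.
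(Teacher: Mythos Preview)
Your proposal is correct and follows exactly the approach the paper intends: the paper does not give a detailed proof but simply states the lemma as a consequence of ``basic properties of the Poisson point process'' after establishing~\eqref{e:tildemu}. Your write-up is precisely the spelling-out of those basic properties (mapping theorem plus independent marking applied through the bijection~$\xi$), so there is nothing to add or correct.
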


Given a compact set~$A\subset \R^d$, we denote by~$\LL_A$ the set of lines in~$\LL$ that intersect~$A$. For~$B\subset \R^d$ also compact, we also write~$\LL_{A,B}:=\LL_A\cap \LL_B$, the set of lines that intersect both~$A$ and~$B$.

\section{Decoupling inequalities}
\label{s:2box}
In this section we will establish a decoupling inequality for the cylinder percolation process, one of the main results of our paper, and also necessary for the subsequent investigations here present. Heuristically we will show that, after a sprinkling of \emph{both} the Poisson process intensity \emph{and} the cylinders' radii, the correlation between the states of the process in two distant boxes becomes stretched exponentially small in the distance, at least when considering monotone functions of said states.

We start with the basic notation needed. Fix the box radius~$L>0$ and three numbers: $\alpha\in(0,1)$, related to the distance between the boxes, the radius-sprinkling value $\eps\in(0,1)$ and the initial cylinder radius $\rho\in[1,4]$.

Given these values we can define the boxes
\begin{equation}
B_1:=B_\infty(0,L) \qquad \text{and} \qquad B_2:=B_1+(2L+L^{2+\alpha}\eps^{-1})\cdot {\bf e}_d,
\end{equation}
so that the distance between~$B_1$ and~$B_2$ equals~$L^{2+\alpha}\eps^{-1}$.

The main result of this section states: \newiconst{c:2boxdec}
\begin{theorem}
  \label{thm:2boxdec}
  There exists a constant~$\useiconst{c:2boxdec}>0$ depending only on the dimension~$d$ such that, for any~$\delta>0$,~$\alpha,\eps\in(0,1)$,~$\rho\in[1,4]$, and any increasing variables
  \begin{equation}
      f_i: \Omega \to [0,1], \text{ measurable with respect to $\sigma(\{\mathcal{M}_{B_i}^{u,\rho}(\omega);u,\rho\in\R_+\})$}
  \end{equation}
  for~$i = 1, 2$, we have
  \begin{equation}
    \label{e:2boxdec}
    \begin{split}
      \IE \big[ f_1 \big( \mathcal{M}_{B_1}^{u, \rho}(\omega) \big) f_2 \big( \mathcal{M}_{B_2}^{u, \rho}(\omega) \big) \big]
    \leq \;
      & \IE \big[ f_1 \big( \mathcal{M}_{B_1}^{u, \rho + \eps}(\omega) \big) \big]
      \IE \big[ f_2 \big( \mathcal{M}_{B_2}^{u + \delta, \rho + \eps}(\omega) \big) \big] \\
      & + c_2^{-1} \exp \big\{ -\useiconst{c:2boxdec} \delta \eps^{d-1} L^{\alpha(d-1)} \big\}.
    \end{split}
  \end{equation}
  Analogously, if
  \begin{equation}
    g_i: \Omega \to [0, 1] \text{ are measurable with respect to } \{\mathcal{M}_{B_i}^{u,\rho}(\omega);u,\rho\in\R_+\}
  \end{equation}
  and decreasing variables for~$i = 1, 2$, then we have
  \begin{equation}
    \label{e:2boxdec2}
    \begin{split}
      \IE \big[ g_1 \big( \mathcal{M}_{B_1}^{u, \rho}(\omega) \big) g_2 \big( \mathcal{M}_{B_2}^{u, \rho}(\omega) \big) \big]
      \leq \;
      & \IE \big[ g_1 \big( \mathcal{M}_{B_1}^{u, \rho - \eps}(\omega) \big) \big]
      \IE \big[ g_2 \big( \mathcal{M}_{B_2}^{u - \delta, \rho - \eps}(\omega) \big) \big] \\
      & + c_2^{-1} \exp \big\{ -\useiconst{c:2boxdec}    \delta \eps^{d-1} L^{\alpha(d-1)} \big\}.
    \end{split}
  \end{equation}
\end{theorem}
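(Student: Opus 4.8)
The plan is to isolate the cylinders that can transmit dependence between the two boxes, namely the lines in $\LL_{B_1, B_2}$, and to show that after a directional perturbation their trace in $B_2$ is so delocalized that it can be dominated by an independent cloud at the cost of the sprinklings $\eps$ and $\delta$. I would begin by decomposing $\omega = \omega_1 + \omega_{12} + \omega_2 + \omega_0$, where $\omega_1$ collects the lines hitting $B_1$ but not $B_2$, $\omega_2$ those hitting $B_2$ but not $B_1$, $\omega_{12}$ those hitting both, and $\omega_0$ the rest; by basic properties of the Poisson process these are independent, and only $\omega_1 + \omega_{12}$ is relevant for $f_1$ and only $\omega_2 + \omega_{12}$ for $f_2$. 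The term $c_2^{-1}\exp\{\cdots\}$ should come out as a bound on the probability of an exceptional event concerning $\omega_{12}$: either $\omega_{12}$ contains ``too many'' lines, or one of its lines enters $B_1$ at an angle too shallow for the $\rho \to \rho+\eps$ thickening to absorb a usable directional wiggle. On the complement of this event the argument proceeds deterministically-plus-sprinkling.

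The key geometric step, carried out line by line for each $l_i \in \omega_{12}$: using the parametrization of Lemma~\ref{l:danielplpsampling} (intersection point with a hyperplane through $B_1$, direction in $\mathbb D$ distributed as $\chi$), I want to show that one may replace the direction $d(l_i)$ by a perturbed direction $d'(l_i)$ in a small cone of half-angle $\sim \eps / L$ around $d(l_i)$ while keeping $B(l_i', \rho) \cap B_1 \subseteq B(l_i, \rho + \eps)$ — this is exactly the ``thickening buys a directional wiggle'' observation flagged in the overview, and it holds precisely because $\rho \ge 1$ and the box has radius $L$. Now propagate: since $\dist(B_1, B_2) = L^{2+\alpha}\eps^{-1}$, a directional change of order $\eps/L$ displaces the axis by order $(\eps/L)\cdot L^{2+\alpha}\eps^{-1} = L^{1+\alpha}$ at the height of $B_2$, which dwarfs the box scale $L$. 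Consequently, conditionally on $\omega_1$, on $\omega_{12}$'s intersection points and multiplicity, and on the ``good'' event, the landing data of the perturbed cylinders in $B_2$ has a conditional law that is absolutely continuous with a density bounded by $1 + c\,\eps^{-(d-1)}L^{-\alpha(d-1)}$ against the law of the landing data of an \emph{independent} Poisson cloud; the gain factor on the density over $k$ such cylinders is then controlled by $\exp\{c k \eps^{-(d-1)}L^{-\alpha(d-1)}\}$. I expect to realize the domination concretely via a thinning/coupling: each perturbed cylinder in $B_2$ can be matched to a cylinder of an independent $\PLP$ process, and the mismatch is absorbed by adding an independent Poisson cloud of intensity $\delta$ in $B_2$ — one checks, using that a fresh cylinder hits $B_2$ with probability $\ge c L^{d-1}$ (from the $\d v_{d-1}$ factor) while the conditional number of relevant cylinders is $O(\delta \eps^{d-1} L^{\alpha(d-1)} \cdot \text{stuff})$ on the good event, that the sprinkled cloud stochastically dominates the perturbed $\omega_{12}$-trace in $B_2$.

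Assembling: on the good event, using monotonicity of $f_2$ and the coupling, $\IE[f_2(\mathcal M_{B_2}^{u,\rho}) \mid \text{relevant } \sigma\text{-algebra}] \le \IE[f_2(\mathcal M_{B_2}^{u+\delta, \rho+\eps})]$, a deterministic bound that no longer depends on $\omega_1$ or $\omega_{12}$; simultaneously $f_1(\mathcal M_{B_1}^{u,\rho}) \le f_1(\mathcal M_{B_1}^{u, \rho+\eps})$ because thickening and perturbing the $\omega_{12}$-cylinders only enlarges the trace in $B_1$. Taking expectations and adding the probability of the bad event — which I bound by $c^{-1}\exp\{-c\,\delta\eps^{d-1}L^{\alpha(d-1)}\}$ via a Poisson/Chernoff estimate on the number of lines in $\LL_{B_1,B_2}$ together with the shallow-angle estimate — yields \eqref{e:2boxdec}. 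The decreasing case \eqref{e:2boxdec2} follows by applying the increasing case to $1 - g_i$ with the roles of $(\rho, u)$ and $(\rho - \eps, u - \delta)$ swapped, i.e.\ running the same perturbation ``inward''; alternatively one reproves it symmetrically. The main obstacle I anticipate is making the conditional-density comparison genuinely quantitative: one must track how the $\chi$-density of directions transforms under the reparametrization ``direction as seen from the $B_1$-hyperplane'' versus ``landing point in $B_2$'', ensure the Jacobian stays bounded uniformly over the good event, and confirm that the loss is exactly of order $\eps^{-(d-1)}L^{-\alpha(d-1)}$ per cylinder so that the product over all $\omega_{12}$-cylinders is killed by the $\delta$-sprinkling — this is where the precise exponent $\alpha(d-1)$ in the statement is earned, and where the technical heart of the proof lies; it is precisely for this reason that in Section~\ref{s:3box} a strengthened version (Theorem~\ref{thm:3boxdec}) is needed for the applications.
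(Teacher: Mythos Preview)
Your intuition is right --- isolate the lines that can touch both boxes, perturb their directions by an angle of order $\eps/L$, absorb the perturbation in $B_1$ by the $\rho\to\rho+\eps$ thickening, and exploit the resulting $L^{1+\alpha}$ displacement at $B_2$ --- but the conditioning step has a genuine gap. You condition on ``$\omega_{12}$'s intersection points and multiplicity'' and then analyze the law of the \emph{perturbed} lines at $B_2$; yet $f_1$ depends on the full trace of $\omega_{12}$ in $B_1$, hence on the original directions $d(l_i)$, which you have \emph{not} conditioned on. So $f_1$ is not measurable with respect to your ``relevant $\sigma$-algebra'', and the factorization $\IE[f_1 f_2]=\IE[f_1\,\IE[f_2\mid\cdot]]$ does not go through. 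If instead you condition on the full $\omega_{12}$ (so that $f_1$ is measurable), the configuration in $B_2$ is deterministic and there is nothing left to delocalize. Your perturbation ``in a small cone around $d(l_i)$'' does not resolve this: the perturbed direction $d'(l_i)$ is not independent of $d(l_i)$, so knowing the perturbed trace in $B_1$ still carries information about the original direction and hence about the trace in $B_2$. (There is also no ``shallow-angle'' bad event here: since the boxes are aligned along ${\bf e}_d$ at distance $L^{2+\alpha}\eps^{-1}$, every line in $\omega_{12}$ automatically has direction within $O(\eps L^{-1-\alpha})$ of ${\bf e}_d$.)

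The paper fixes this with two ingredients you are missing. First, it uses \emph{two} stochastic operations, $\Gamma_1$ and $\Gamma_2$, each fixing the intersection point of a line with the hyperplane through the corresponding box and \emph{resampling} the direction afresh from $\bar\chi_{\eps,L}$ (the law $\chi$ conditioned on the small cap $D_{\eps,L}$ around ${\bf e}_d$); Lemma~\ref{l:bend} then gives $\mathcal{M}_{B_i}^{u,\rho}(\eta+\eta_i^0)\preceq\mathcal{M}_{B_i}^{u,\rho+\eps}(\Gamma_i(\eta)+\eta_i^0)$ for \emph{each} $i$ separately. Second, because $\Gamma_i$ resamples independently of the original direction, the pair $(\eta,\Gamma_i(\eta))$ is exchangeable (detailed balance, \eqref{eq:gammarevers}); this lets one rewrite the conditional expectation $\IE[f_2(\Gamma_2(\eta)+\eta_2^0)\mid\Gamma_1(\eta_{S_1})]$ as $\IE[f_2(\Gamma_2(\Gamma_1(\eta_{S_1}))+\cdots)\mid\eta_{S_1}]$. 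It is the \emph{composition} $\Gamma_2\circ\Gamma_1$ that delocalizes both the landing point (via $\Gamma_1$) and the direction (via $\Gamma_2$) at $B_2$, uniformly in the conditioning --- this is Lemma~\ref{l:theta} --- after which the poissonization/domination by $\PLP(\delta\mu)$ proceeds as you sketch. Your single ``wiggle around the original direction'' gives neither the exchangeability needed for the swap nor the second resampling needed to decouple the direction at $B_2$.
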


\begin{remark}
  We should emphasize that the dependecy present between~$\mathcal{M}_{B_1}^{u, \rho}(\omega)$ and~$\mathcal{M}_{B_2}^{u, \rho}(\omega) \big)$ comes exactly from the cylinders which are able to intersect both boxes $B_1$ and $B_2$. In fact, inequality \eqref{e:decouple_slow} from \cite{TW10b} comes from a bound on the intensity measure of such cylinders.
\end{remark}

What follows is a (very) heuristic roadmap explaining how we will obtain inequality~\eqref{e:2boxdec} (inequality~\eqref{e:2boxdec2} is obtained in an analogous way).
\vspace{.3cm}

\paragraph{Roadmap for the 2-box decoupling inequality}
\begin{itemize}
\item[(i)] We notice that, for large~$L$, the radii of the boxes $B_1$ and $B_2$ are much smaller than their mutual distance.
  Therefore, the lines that touch both boxes (which are the ones that may carry information between them) are those whose directions are closely aligned to~${\bf e}_d$.
  During the proof, we make small perturbations to the directions of these ``problematic'' lines which are ``close'' to intersecting both boxes~$B_1$ and $B_2$.
  These perturbations being done independently for each line and for each box;
\item[(ii)] We show that inside $B_1$, the ``problematic'' cylinders are still covered by their perturbed versions, so long as the perturbed cylinders have a slightly enlarged thickness;
\item[(iii)] Finally, we study the influence that the enlarged cylinder set intersecting~$B_1$ has on the respective set intersecting~$B_2$. We show, using a poissonization argument, that this influence can be dominated by a sprinkling of the parameter~$u$, at least when we exclude an event with vanishingly small probability.
\end{itemize}

In order to rigorously implement the above plan we will need additional definitions. We consider~$\Pi_1:=\R^{d-1}\times\{L\}$, and~$\Pi_2=\R^{d-1}\times\{L+L^{2+\alpha}\eps^{-1}\}$, so that~$\Pi_1$ and~$\Pi_2$ contain opposing faces of the hypercubes~$B_1$ and~$B_2$.

\begin{figure}
  \centering
  \begin{subfigure}[b]{0.3\textwidth}
      \centering
      \includegraphics[width=\textwidth]{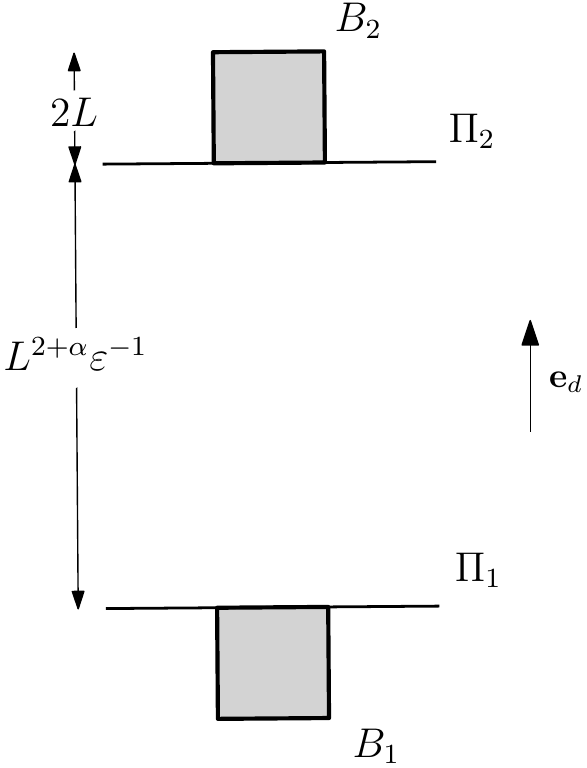}
  \end{subfigure}
  \hfill
  \begin{subfigure}[b]{0.55\textwidth}
      \centering
      \includegraphics[width=\textwidth]{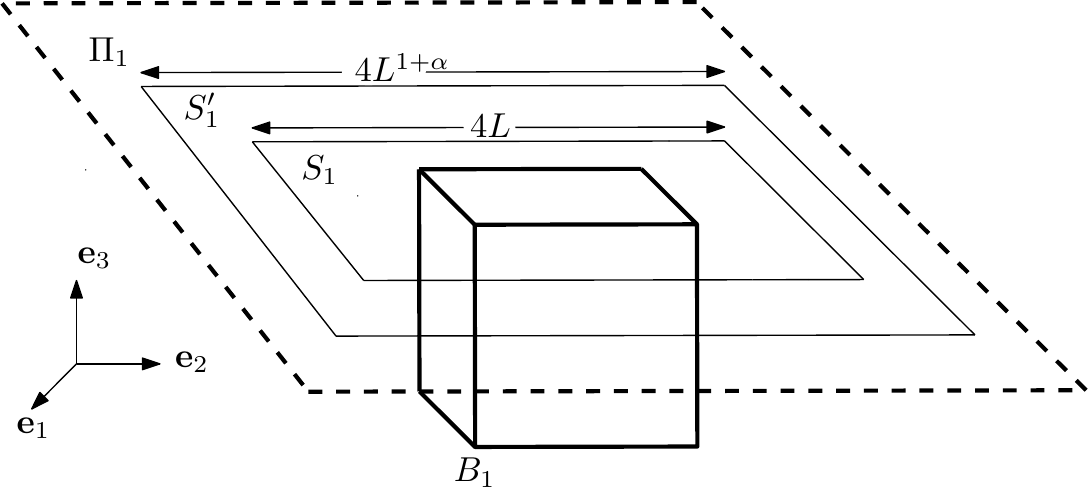}
  \end{subfigure}
  \caption{Two images showing representations of some of the sets involved in the decoupling inequality.}
  \label{f:s1def}
\end{figure}
The sets $\Pi_1$ and $\Pi_2$ allow us to consider two different parametrizations  of the lines in~$\LL^*$. For~$i = 1, 2$, we characterize a line~$l \in \LL^*$ by~$p_i(l)$, its intersection point with~$\Pi_i$, and its direction~$d_i(l)\in\mathbb{D}$, in an analogous manner to that of~\eqref{e:2ndplpdef}. We note that~$d_1(l) = d_2(l) = d(l)$, and that, by translation invariance of~$\mu$, we can sample~$\PLP(u\mu)$ in the manner of Lemma~\ref{l:danielplpsampling}, starting with a Poisson point process in either~$\Pi_1$ or~$\Pi_2$ instead of~$\R^{d-1}\times\{0\}$.

It is important to consider the subsets of $\Pi_i$ where the ``problematic'' lines start:
\begin{equation}
  \begin{array}{cclcl}
    S_1 & := & [-2L, 2L]^{d - 1} \times \{L\} & \subset & \Pi_1, \\
    S_2 & := & [-2L, 2L]^{d - 1} \times \{L + L^{2 + \alpha} \eps^{-1}\} & \subset & \Pi_2.
\end{array}
\end{equation}
Since we are going to perturb these lines, it is also important to consider a larger version of the above sets
\begin{equation}
  \begin{array}{cclcl}
    S_1' & := & [-2L^{1 + \alpha}, 2L^{1 + \alpha}]^{d - 1} \times \{L\} & \subset & \Pi_1, \\
   S_2' & := & [-2L^{1 + \alpha}, 2L^{1 + \alpha}]^{d - 1} \times \{L + L^{2 + \alpha} \eps^{-1} \} & \subset & \Pi_2.
  \end{array}
\end{equation}
The $(d-1)$-dimensional squares~$S_1', S_2'$ are the sets which will tell us if a line is ``close'' to~$B_1, B_2$, respectively, in the context of item (i) of our Roadmap.
Note that $S_1 \subset S_1'$ and $S_2 \subset S_2'$, see Figure~\ref{f:s1def}.

As we mentioned in the proof overview, the ``problematic'' lines are those aligned with the vertical direction.
It is therefore natural to define the spherical cap centered at the ``north pole''~${\bf e}_d$ with (Euclidean metric) diameter~$\eps/(8L)$:
\begin{equation}
\label{e:depsldef}
D_{\eps,L}:=\left\{ x \in \mathbb{D}; \dist(x,{\bf e}_d)< \eps(8L)^{-1}      \right\}.
\end{equation}
Define also~$\tilde{B}_i:=B(B_i,\rho(1+\eps))$, $i=1,2$, and note that if a line does not intersect this open neighborhood of~$B_i$, then the associated cylinder of radius~$\rho+\eps$ does not intersect~$B_i$.
We note that, for sufficiently large~$L$,
\begin{display}
  in order for a line with direction in~$D_{\eps,L}$ to intersect~$\tilde B_i$,\\
  it has also to intersect~$S_i$, for~$i=1,2$.
\end{display}

The final ingredient in our proof is a decomposition of our point measure into independent processes, distinguishing the lines depending on their directions and the sets they intersect.
This decomposition will make it clear why the vertically aligned lines are the source of dependence between $B_1$ and $B_2$.

Let us decompose the lines from
\[
\omega\equiv \sum_{i \geq 0,u_i\leq u} \delta_{(l_i,u_i)}\eqd\PLP(u\mu),
\]
intersecting~$\tilde B_i$ into separate (but not necessarily disjoint) point measures. As we mentioned, the first two are not troublesome, as they are unable to carry information from the cylinder state inside one box to the other. On the other hand, controlling the dependencies associated to the third point measure is the main focus of this section. Consider
\begin{equation}
\begin{split}
\eta_1^0 &:=\sum_{i; \; u_i\leq u} \delta_{(l_i,u_i)}{\bf 1} \{ l_i\cap   \tilde B_1\neq \varnothing; d_1(l_i)\notin D_{\eps,L}             \},
\\
\eta_2^0 &:=\sum_{i; \; u_i\leq u} \delta_{(l_i,u_i)}{\bf 1} \{ l_i\cap  \tilde B_2\neq \varnothing; d_2(l_i)\notin D_{\eps,L}             \},
\\
\eta& :=\sum_{i; \; u_i\leq u} \delta_{(l_i,u_i)}{\bf 1} \{  d(l_i) \in D_{\eps,L} \text{ and either }      p_1(l)\in S_1' \text{ or }    p_2(l)\in  S_2   '  \}.
\end{split}
\end{equation}
We note that, by elementary trigonometry, for large enough~$L$ the lines of~$\eta_1^0$ do not intersect~$\tilde B_2$, and the same holds changing the places of the indices~$1$ and~$2$.

We can now define the way in which we will perturb the directions of the cylinders' axes inside each box, as previewed in item (i) of the Roadmap. We will define two stochastic operations that essentially re-sample the direction~$d_i(l)\in D_{\eps,L}$ of each line~$l\in\eta$ while fixing the intersection point~$p_i(l)$ of~$l$ with~$S_i'$. Denote by~$\bar \chi_{\eps,L}$ the probability measure~$\chi$ defined in~\eqref{e:defspheremes} conditioned on sampling a point in~$D_{\eps,L}$. For~$i=1,2$ we define the stochastic operation
\begin{equation}
\label{eq:gammadef}
\begin{array}{cclc}
\Gamma_i: &\eta & \to& \Gamma_i(\eta)\\
&(p_i(l),d_i(l)) & \mapsto& (p_i(l), d_i'(l)) ,
\end{array}
\end{equation}
where~$ d_i'(l)$ is defined to be either a random vector in~$D_{\eps,L}$ sampled according to~$\bar \chi_{\eps,L}$ independently for each~$l\in\eta$ if~$p_i(l)\in S_i'$, or simply equal to~$d_i(l)$ otherwise. See Figure~\ref{f:gamma1gamma2eta} for an illustration of these stochastic operations.

\begin{figure}[ht]
  \centering
  \includegraphics[scale = .6]{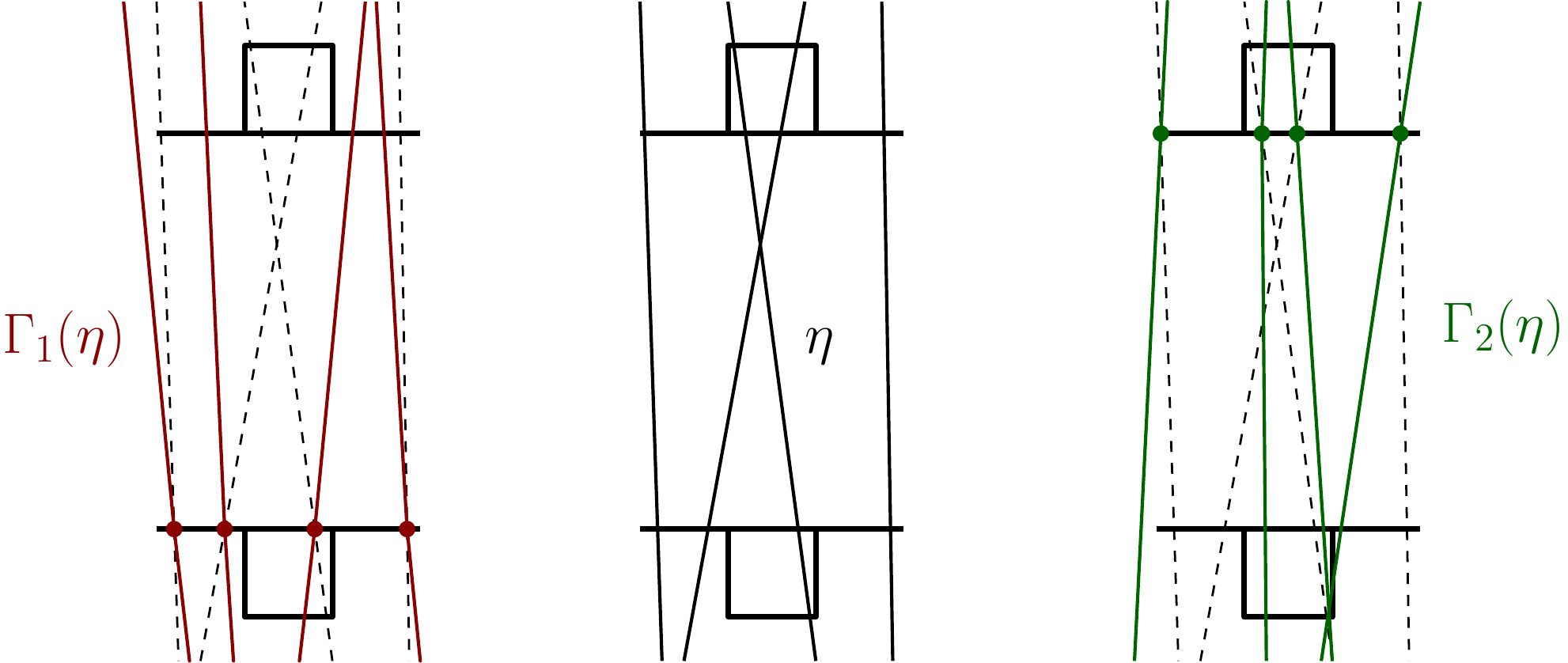}
  \vspace{0.1cm}
  \caption{The \emph{potentially problematic} lines of $\eta$, together with its perturbed versions, $\Gamma_1(\eta)$ and $\Gamma_2(\eta)$. The stochastic operation $\Gamma_i$ consists in fixing the intersection point of a line with the plane $\Pi_i$ and resampling its direction, conditioned on it being ``problematic''.}
  \label{f:gamma1gamma2eta}
\end{figure}

Crucially, by elementary properties of the Poisson process, we get that $\Gamma_i$ are reversible.
More precisely, they satisfy the detailed balance conditions
\begin{equation}
  \label{eq:gammarevers}
  \big( \eta, \Gamma_i (\eta) \big) \eqd \big( \Gamma_i(\eta), \eta \big), \end{equation}
for $i = 1, 2$.

We can now rigorously state and prove step (ii) of the Roadmap.
The lemma below is a deterministic statement which, informally speaking, says that the wiggling introduced by the $\Gamma_i$ operators can be dominated by slight thickening of the cylinders' radii. For an illustration showing this domination, see Figure~\ref{f:cylinder_contain}.
\begin{figure}[ht]
  \centering
  \includegraphics[scale = .5]{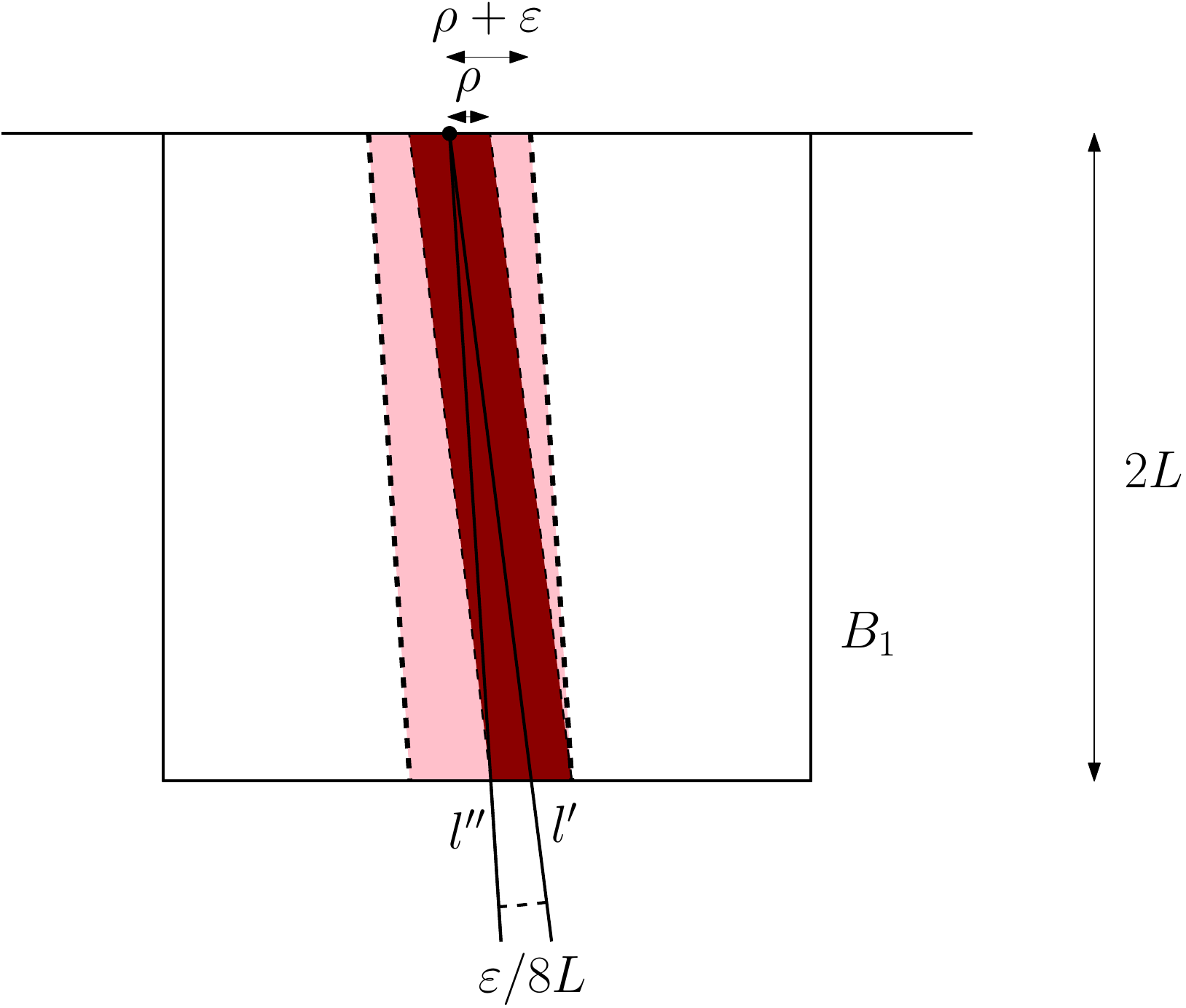}
  \vspace{0.1cm}
  \caption{By enlarging the radii of the perturbed cylinders, we will have the required domination between the cylinder processes: indeed the intersection of the smaller cylinders with the box $B_!$ will be contained in the intersection of the larger cylinders with the same box.}
  \label{f:cylinder_contain}
\end{figure}

\begin{lemma}
\label{l:bend}
With the notation above developed we have, for~$i=1,2$, and sufficiently large~$L$,
\begin{equation}
\label{eq:lemmabend}
\mathcal{M}_{B_i}^{u,\rho-\eps}\big(\Gamma_i(\eta)+\eta_i^0\big)\preceq \mathcal{M}_{B_i}^{u,\rho}\big(\eta+\eta_i^0\big) \preceq \mathcal{M}_{B_i}^{u,\rho+\eps}\big(\Gamma_i(\eta)+\eta_i^0\big).
\end{equation}
\end{lemma}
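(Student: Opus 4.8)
The plan is to prove the two partial-order inequalities in \eqref{eq:lemmabend} separately, and since they are symmetric (with the roles of $\eta$ and $\Gamma_i(\eta)$ exchanged and $\rho$ shifted by $\pm\eps$), it suffices to establish the right-hand inequality $\mathcal{M}_{B_i}^{u,\rho}(\eta+\eta_i^0)\preceq \mathcal{M}_{B_i}^{u,\rho+\eps}(\Gamma_i(\eta)+\eta_i^0)$; the left-hand one then follows by applying the same argument with $\Gamma_i(\eta)$ in the role of $\eta$, using reversibility \eqref{eq:gammarevers} only to make sense of the (pathwise) statement, and noting $(\rho-\eps)+\eps=\rho$. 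Recall that $\preceq$ requires two things: set containment of the cylinder sets intersected with $B_i$, and the inequality between the counts $\mathcal{N}_{B_i}$. First I would fix $i$, fix a realization of $\omega$, and split the lines contributing to either side into those in $\eta_i^0$ (which are untouched by $\Gamma_i$) and those in $\eta$. The $\eta_i^0$ part contributes identically to both sides at radii $\rho$ and $\rho+\eps$ respectively, so $B(l,\rho)\cap B_i\subseteq B(l,\rho+\eps)\cap B_i$ handles it trivially, and these lines are counted on both sides.

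The core is the $\eta$ part. Fix a line $l\in\eta$ with $d(l)\in D_{\eps,L}$. There are two cases. If $p_i(l)\notin S_i'$, then $\Gamma_i$ leaves $l$ unchanged, so again $B(l,\rho)\cap B_i\subseteq B(l,\rho+\eps)\cap B_i$ and $l$ is counted on both sides. If $p_i(l)\in S_i'$, then $\Gamma_i$ replaces $l$ by the line $l'$ through the \emph{same} point $p_i(l)\in\Pi_i$ with a new direction $d_i'(l)\in D_{\eps,L}$. The key geometric claim I would isolate as the heart of the proof is: \emph{for any point $q\in\Pi_i$ and any two directions $w,w'\in D_{\eps,L}$, the line $l$ through $q$ with direction $w$ satisfies $B(l,\rho)\cap B_i\subseteq B(l',\rho+\eps)\cap B_i$, where $l'$ is the line through $q$ with direction $w'$.} To see this, take any $x\in B(l,\rho)\cap B_i$. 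Then $x$ lies within distance $\rho$ of $l$; let $y$ be the foot of the perpendicular from $x$ to $l$. Parametrize $l$ by arc length from $q$, so $y=q+tw$ with $|t|\le \mathrm{diam}(B_i)+\rho\le cL$ since $x\in B_i$ and the relevant part of $l$ passes near $B_i$ (here I use that $q\in S_i'\subset\Pi_i$, which is a face-plane of $B_i$, so the segment from $q$ to $y$ has length at most the diameter of $B_i$ plus $\rho$). The corresponding point $y'=q+tw'$ on $l'$ then satisfies $|y-y'|=|t|\,|w-w'|\le cL\cdot \eps/(8L)\le \eps/2$ (using that $w,w'\in D_{\eps,L}$, hence $|w-w'|<\eps/(4L)$ by the triangle inequality from the diameter bound in \eqref{e:depsldef}). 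Therefore $\dist(x,l')\le |x-y|+|y-y'|\le \rho+\eps/2<\rho+\eps$, so $x\in B(l',\rho+\eps)$, and $x\in B_i$ by assumption. This proves the containment for each such line; and since $l'=\Gamma_i(l)$ is a line through a point of $S_i'$, it is among the lines of $\Gamma_i(\eta)$ contributing at radius $\rho+\eps$.

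It remains to handle the counts and to make sure the containment statement at the level of $\mathcal{M}_{B_i}$ (which involves \emph{unions} over lines and the \emph{total} count) follows from the per-line statements. For the union: taking the union over all lines of $\eta+\eta_i^0$ of the sets $B(\cdot,\rho)\cap B_i$ and comparing termwise with the union over $\Gamma_i(\eta)+\eta_i^0$ of $B(\cdot,\rho+\eps)\cap B_i$, the per-line containments give $\mathcal{C}_u^\rho(\eta+\eta_i^0)\cap B_i\subseteq \mathcal{C}_u^{\rho+\eps}(\Gamma_i(\eta)+\eta_i^0)\cap B_i$, noting that $\Gamma_i$ is a bijection between the lines of $\eta$ and those of $\Gamma_i(\eta)$ (it only moves directions) and leaves $\eta_i^0$ fixed. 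For the counts $\mathcal{N}_{B_i}$: I need that every line of $\eta+\eta_i^0$ whose radius-$\rho$ cylinder meets $B_i$ corresponds to a line of $\Gamma_i(\eta)+\eta_i^0$ whose radius-$(\rho+\eps)$ cylinder meets $B_i$ — but this is immediate from the per-line containment, since $B(l,\rho)\cap B_i\neq\varnothing$ and $B(l,\rho)\cap B_i\subseteq B(l',\rho+\eps)\cap B_i$ force $B(l',\rho+\eps)\cap B_i\neq\varnothing$; combined with the bijectivity of $\Gamma_i$ this gives $\mathcal{N}_{B_i}^{u,\rho}(\eta+\eta_i^0)\le \mathcal{N}_{B_i}^{u,\rho+\eps}(\Gamma_i(\eta)+\eta_i^0)$. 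Finally, the "sufficiently large $L$" hypothesis is exactly what is needed for the trigonometric bound $|t|\le cL$ (equivalently, for the displayed fact that a line with direction in $D_{\eps,L}$ meeting $\tilde B_i$ must meet $S_i$, and for $\eps/2<\eps$ to dominate the shift) to be valid uniformly in $\rho\in[1,4]$ and $\eps\in(0,1)$. I expect the main obstacle to be the careful bookkeeping of the arc-length parameter bound $|t|\le cL$: one must be slightly careful that the perpendicular foot $y$ from a point $x\in B_i$ onto a nearly-vertical line $l$ through a point of $S_i'$ genuinely lies within distance $cL$ of $q$ along $l$, which uses both that $S_i'$ sits in the face-plane $\Pi_i$ of $B_i$ and that the direction is close to ${\bf e}_d$ — this is the step where the specific geometry of the boxes and the definition of $D_{\eps,L}$ are all used at once.
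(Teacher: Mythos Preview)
Your proposal is correct and follows essentially the same geometric argument as the paper: both establish that two lines through a common point $p\in\Pi_i$ with directions in $D_{\eps,L}$ stay within distance $O(\eps)$ of one another throughout $B_i$, using that the relevant arc-length (equivalently, vertical-coordinate) parameter is $O(L)$ there while $|w-w'|\le\eps/(4L)$. The paper parametrizes by horizontal slices $\R^{d-1}\times\{z_0\}$, $z_0\in[-L,L]$, rather than by perpendicular feet, and leaves the $\mathcal{N}_{B_i}$ counting step that you spell out implicit, but the substance is the same.
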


\begin{proof}
  We will focus on the case~$i=1$ since the other follows analogously.
  Fix $v',v''\in D_{\eps,L}$, and denote respectively by~$v'_d,v''_d\in\R$ their~$d$-th coordinates. Let then~$p=(p_1,\dots,p_{d-1},L)\in \Pi_1$, and consider the lines
  \[
    l'=\{v' t +p;t\in \R \}\quad\text{ and }\quad l''=\{v''s +p; s\in \R \}.
  \]
  For~$z_0\in [-L,L]$, we show that
  \begin{display}
    \label{e:wiggle}
    for large enough $L$, the distance between the points\\
    $(\R^{d-1}\times \{z_0\} )\cap l'$ and $(\R^{d-1}\times \{z_0\} )\cap l''$ is smaller than~$3\eps/4$,
\end{display}
  which will prove the result. Write~$t_0:=(z_0-L)(v_d')^{-1}$ and~$s_0:=(z_0-L)(v_d'')^{-1}$. We have
  \[
    (\R^{d-1}\times \{z_0\} )\cap l' =p+t_0 v'\quad\text{ and }\quad(\R^{d-1}\times \{z_0\} )\cap l''  =p+ s_0 v''.
  \]
  Notice that by the Law of cosines,
  \[
    1-\frac{\eps^2}{128 L^2} \leq v_d',v_d'' \leq 1.
  \]
  We can then show, for sufficiently large~$L$,
  \begin{equation}
    \label{e:item3theta}
    \begin{split}
      \left|  p+t_0 v' -p - s_0 v''         \right|
      &=
      |t_0|\left|v'-\frac{s_0}{t_0}v''\right|
      \leq
      |t_0|\left(\left|v'-v''\right|  + \left|v''-\frac{s_0}{t_0}v''\right|           \right)
      \\
      &\leq
      \left|\frac{z_0-L}{v_d'}\right|\left(\frac{\eps}{4L}  + \left|1-\frac{s_0}{t_0}\right|           \right)
      \\
      &\leq
      2L\left(1+\frac{\eps^{2}}{64 L^2}\right)\left(\frac{\eps}{4L}  + 1-  \left(1+\frac{\eps^{2}}{64 L^2}\right)     \right)
      \leq \frac{3}{4}\varepsilon,
    \end{split}
  \end{equation}
  where in the last two inequalities, we used the fact that~$\eps/L$ was sufficiently small.
  This proves \eqref{e:wiggle} and consequently the result.
\end{proof}

Our next objective is to show how a slight change in the intensity of the process can be used to dominate the negative information that we may have obtained by looking at the other box.

We first split the random point measure~$\Gamma_1(\eta)$ into two measures taking into account whether their constituent lines intersect~$S_1$ or not. First define
\begin{equation}
\label{e:etas1}
\begin{split}
\eta_{S_1}&:=\sum_{(l_i,u_i)\in\eta} \delta_{(l_i,u_i)}{\bf 1} \{  p_1(l_i)\in S_1 ;    d_1(l_i)\in D_{\eps,L}      \};
\\
\eta_{S_1'\setminus S_1}&:=\eta-\eta_{S_1}.
\end{split}
\end{equation}
We then define the images of the above point measures after applying the stochastic operation~$\Gamma_1$,
\begin{equation}
\label{e:etas1g}
\begin{split}
 \Gamma_1(\eta_{S_1})&:=\sum_{(l_i,u_i)\in\Gamma_1(\eta)} \delta_{(l_i,u_i)}{\bf 1} \{  p_1(l_i)\in S_1 ;    d_1'(l_i)\in D_{\eps,L}      \};
\\
\Gamma_1(\eta_{S_1'\setminus S_1})&:=\Gamma_1(\eta)-\Gamma_1(\eta_{S_1}).
\end{split}
\end{equation}
Recall that, for sufficiently large~$L$, in order for a line with direction in~$D_{\eps,L}$ to intersect~$\tilde B_1$, it has also to intersect~$ S_1$. Therefore, between the two measures above, $\Gamma_1(\eta_{S_1})$ is the only one that can actually influence the cylinder set inside~$B_1$.

The following proposition rigorously states the first part of item~(iii) of the Roadmap. It provides us with a quantitative statement concerning the influence of~$\Gamma_1(\eta_{S_1})$ on the cylinder set intersected with~$B_2$, and it will be the kernel of the proof of Theorem~\ref{thm:2boxdec}.
\begin{proposition}
  \label{p:sprinkle}
  There exists a constant~$\useiconst{c:2boxdec}>0$ depending only on the dimension~$d$ such that, for~$\delta>0$,~$\alpha,\eps\in(0,1)$ and any increasing variable
  \begin{equation}
    f_2: \Omega \to [0,1], \text{ measurable with respect to $\sigma(\{\mathcal{M}_{B_2}^{u,\rho}(\omega);u,\rho\in\R_+\})$}
  \end{equation}
  we have,
  \begin{equation}
    \label{e:sprinkle1}
    \begin{split}
      \IE \big[ f_2 \big( \mathcal{M}_{B_2}^{u, \rho + \eps} \big( \Gamma_2(\eta) + \eta_2^0 \big) \big) \big\vert \Gamma_1(\eta_{S_1}) \big]
      \leq & \; \IE \big[ f_2 \big( \mathcal{M}_{B_2}^{u + \delta, \rho + \eps} \big( \Gamma_2(\eta) + \eta_2^0 \big) \big) \big] \\
      & + {\bf 1} \{\eta_{S_1} \in A\}
    \end{split}
  \end{equation}
  where the event $A$ satisfies
  \begin{equation}
    \mathbb{P} [ \eta_{S_1} \in A ] \leq \exp \big\{ -\useiconst{c:2boxdec} \delta \eps^{d - 1} L^{\alpha(d - 1)} \big\}.
  \end{equation}
  Furthermore, if
  \begin{equation}
    g_2: \Omega \to [0,1], \text{ measurable with respect to $\sigma(\{\mathcal{M}_{B_2}^{u,\rho}(\omega);u,\rho\in\R_+\})$}
  \end{equation}
  is a decreasing variable, then for $\delta \in (0, u)$ and~$\eps \in (0, \rho)$, we have
  \begin{equation}
    \label{e:sprinkle2}
    \begin{split}
      \IE \big[ g_2 \big( \mathcal{M}_{B_2}^{u, \rho - \eps} \big( \Gamma_2(\eta) + \eta_2^0 \big) \big) \big\vert \Gamma_1(\eta_{S_1}) \big]
      \leq & \; \IE \big[ g_2 \big( \mathcal{M}_{B_2}^{u - \delta, \rho - \eps} \big( \Gamma_2(\eta) + \eta_2^0 \big) \big) \big] + {\bf 1} \{ \eta_{S_1} \in B \}
    \end{split}
  \end{equation}
  where
  \begin{equation}
    \mathbb{P} [\eta_{S_1} \in B] \leq \exp \big\{ -\useiconst{c:2boxdec} \delta\eps^{d - 1}L^{\alpha(d - 1)} \big\}.
  \end{equation}
\end{proposition}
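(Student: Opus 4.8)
The plan is to reveal $\Gamma_1(\eta_{S_1})$ and show that, conditionally, the lines of $\Gamma_1(\eta_{S_1})$ that actually reach $B_2$ form a Poisson process whose intensity inside $B_2$ (more precisely, as a law on $\mathcal{M}_{B_2}$) is dominated by a $\delta$-sprinkling of the original cylinder process, up to a rare event. First I would describe the law of $\Gamma_1(\eta_{S_1})$: by the sampling of Lemma~\ref{l:danielplpsampling} started from $\Pi_1$, it is a Poisson process on $S_1 \times [0,u]$ of intersection points $p_1(l) \in S_1$ with i.i.d.\ marks, and after applying $\Gamma_1$ each such line gets an \emph{independent} fresh direction sampled from $\bar\chi_{\eps,L}$. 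Conditioning on $\Gamma_1(\eta_{S_1})$ thus amounts to conditioning on finitely many points in $S_1$, each carrying a uniformly-in-$D_{\eps,L}$-like direction; call $N := \eta_{S_1}(S_1 \times [0,u])$ the number of such lines, which is Poisson with parameter $c\, u\, |S_1| = c\, u\, (4L)^{d-1}$.

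Next, the key computation: for a line through a fixed point $p \in S_1 \subset \Pi_1$ with direction resampled from $\bar\chi_{\eps,L}$, I would estimate the probability that the corresponding radius-$(\rho+\eps)$ cylinder intersects $B_2$. Because $B_2$ sits at vertical distance $\asymp L^{2+\alpha}\eps^{-1}$ from $\Pi_1$ and has width $2L$, a line starting at $p$ reaches $\tilde B_2$ only if its direction lies in a sub-cap of $D_{\eps,L}$ of angular radius $\asymp L / (L^{2+\alpha}\eps^{-1}) = \eps L^{-1-\alpha}$, whereas $D_{\eps,L}$ itself has angular radius $\asymp \eps L^{-1}$; hence the $\bar\chi_{\eps,L}$-probability of hitting $B_2$ is at most $c\, (L^{-\alpha})^{d-1} = c\, L^{-\alpha(d-1)}$. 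Therefore the number $N'$ of lines of $\Gamma_1(\eta_{S_1})$ that reach $B_2$ is, conditionally on $\Gamma_1(\eta_{S_1})$ (equivalently on $N$ and the resampled directions), a Binomial$(N, q)$ with $q \le c\, L^{-\alpha(d-1)}$; unconditionally it is Poisson with parameter $\lambda_0 := c\, u\, L^{d-1} \cdot L^{-\alpha(d-1)}$. Moreover — and this is the point of resampling each direction \emph{independently} — conditionally on $\Gamma_1(\eta_{S_1})$ these $N'$ lines, as seen from $B_2$ (i.e.\ their traces $\mathcal{M}_{B_2}$), are independent with a fixed law that does not depend on which realization of $\Gamma_1(\eta_{S_1})$ we conditioned on, because $\Gamma_1$ re-randomizes the directions; so the conditional law of $\mathcal{M}_{B_2}^{u,\rho+\eps}(\Gamma_1(\eta_{S_1}))$ given $\Gamma_1(\eta_{S_1})$ is that of a Poisson cluster of at most $N'$ cylinders, dominated in $\preceq$ by a Poisson process of the right kind with parameter $\asymp N'$. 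The event $A$ is then taken to be $\{\eta_{S_1}\in A\} := \{N' \ge k_0\}$ for a threshold $k_0 := \lceil c\, \delta\, \eps^{d-1} L^{\alpha(d-1)}\rceil$-type quantity chosen so that on $A^c$ the extra cylinders coming from $\Gamma_1(\eta_{S_1})$ can be stochastically added into the independent process $\Gamma_2(\eta)+\eta_2^0$ at cost at most a $\delta$-sprinkling of $u$. Since $N'$ is (stochastically dominated by) a Poisson variable with a small parameter, a standard Poisson tail bound gives $\mathbb{P}[N' \ge k_0] \le \exp\{-\useiconst{c:2boxdec}\,\delta\,\eps^{d-1}L^{\alpha(d-1)}\}$; here the $\eps^{d-1}$ factor enters through the comparison between the "thick" cylinders one adds (radius $\rho+\eps$) and the density $\delta$ needed, i.e.\ each added cylinder of radius $\le \rho+\eps \le 5$ can be absorbed by a density increment of order $\eps^{d-1}\delta^{-1}$, matching the exponent in the statement.

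Finally I would assemble these pieces. On $A^c$ we have at most $k_0$ extra radius-$(\rho+\eps)$ cylinders hitting $B_2$; by a standard coupling/domination argument for Poisson processes (thinning and superposition), the process $\mathcal{M}_{B_2}^{u,\rho+\eps}(\Gamma_2(\eta)+\eta_2^0)$ \emph{together with} these extra cylinders is stochastically dominated (in the order $\preceq$ on $\mathcal{M}_{B_2}$) by $\mathcal{M}_{B_2}^{u+\delta,\rho+\eps}(\Gamma_2(\eta)+\eta_2^0)$; since $f_2$ is increasing this yields \eqref{e:sprinkle1}, absorbing the contribution of $A$ into the indicator term $\mathbf{1}\{\eta_{S_1}\in A\}$ (which costs nothing since $f_2 \le 1$). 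The decreasing case \eqref{e:sprinkle2} is symmetric: one removes, rather than adds, the cylinders of $\Gamma_1(\eta_{S_1})$ that reach $B_2$, using $\delta \in (0,u)$ and $\eps \in (0,\rho)$ to keep the sprinkled parameters positive, and the same tail bound controls $\mathbb{P}[\eta_{S_1}\in B]$.

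I expect the main obstacle to be making precise the claim that, conditionally on $\Gamma_1(\eta_{S_1})$, the trace $\mathcal{M}_{B_2}^{u,\rho+\eps}(\Gamma_1(\eta_{S_1}))$ has a law that is both (a) independent of the conditioning and (b) genuinely dominated by a $\delta$-sprinkle of the \emph{original} process rather than of $\Gamma_2(\eta)$; one has to be careful that $\Gamma_2$ only resamples directions of the $S_2'$-lines while fixing their $\Pi_2$-intersection points, so the extra cylinders from $\Gamma_1(\eta_{S_1})$ — whose $\Pi_2$-footprints are spread out over a region of size $\asymp L^{1+\alpha}$ inside $S_2'$ — have to be shown to be absorbable into the Poisson point process on $S_2'$ of intensity $\delta\, c_{\ref{c:mu}}\,\d v_{d-1}\otimes\bar\chi_{\eps,L}$, which is exactly where the exponent $\delta\,\eps^{d-1}L^{\alpha(d-1)}$ comes from (the spatial area $L^{\alpha(d-1)}$ times the directional measure $\eps^{d-1}$ of $D_{\eps,L}$ times the density $\delta$). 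Getting the bookkeeping of these three factors right, and verifying that the resampled directions of the $\Gamma_1(\eta_{S_1})$-lines are distributed exactly like the directions used by the independent auxiliary process so that the domination is clean, is the delicate part.
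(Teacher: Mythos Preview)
Your geometric intuition---the $L^{-\alpha(d-1)}$ hitting probability from the angular comparison, and the exponent $\delta\eps^{d-1}L^{\alpha(d-1)}$ arising as (spatial area) $\times$ (directional mass of $D_{\eps,L}$) $\times$ (density $\delta$)---is correct and matches the paper. But the argument as written has a real gap. You repeatedly work with ``the lines of $\Gamma_1(\eta_{S_1})$ that reach $B_2$'' and assert that, conditionally on $\Gamma_1(\eta_{S_1})$, these are random with a law independent of the conditioning ``because $\Gamma_1$ re-randomizes the directions''. This is backwards: once you condition on $\Gamma_1(\eta_{S_1})$, those lines are deterministic, so your $N'$ is a fixed number. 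More to the point, $\Gamma_1(\eta_{S_1})$ is not what enters $f_2$ at all: the function sees $\Gamma_2(\eta)=\Gamma_2(\eta_{S_1})+\Gamma_2(\eta_{S_1'\setminus S_1})$, and the dependence on the conditioning runs through $\Gamma_2(\eta_{S_1})$, which shares $\Pi_1$-footprints with $\Gamma_1(\eta_{S_1})$ but has \emph{different} directions. Your event $A=\{N'\geq k_0\}$, defined via lines of $\Gamma_1(\eta_{S_1})$ hitting $B_2$, is therefore measuring the wrong object and is not a function of $\eta_{S_1}$ as the statement demands.

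The paper supplies two ingredients you are missing. First, the detailed-balance identity $(\eta_{S_1},\Gamma_1(\eta_{S_1}))\eqd(\Gamma_1(\eta_{S_1}),\eta_{S_1})$ of \eqref{e:etasgetaeqd} is used to rewrite the conditional expectation so that one conditions on $\eta_{S_1}$ and the process inside $B_2$ becomes $\Gamma_2(\Gamma_1(\eta_{S_1}))+\eta_{S_1'\setminus S_1}+\eta_2^0$; this is the rigorous form of the ``re-randomization'' you are reaching for, and it makes the composition $\Gamma_2\circ\Gamma_1$ the relevant object. Second, Lemma~\ref{l:theta} gives a \emph{density} bound for a single line $\Gamma_2\circ\Gamma_1(l)$ on $\Pi_2\times D_{\eps,L}$: the $\Pi_2$-footprint has Lebesgue density at most $cL^{-(1+\alpha)(d-1)}$ and the direction is then a fresh sample from $\bar\chi_{\eps,L}$. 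This density bound---not merely a count of how many lines hit $B_2$---is what makes the poissonization go through: conditionally on $\eta_{S_1}$, the perturbed lines landing in $S_2$ are dominated by a Poisson process of intensity $\leq c\,\mathcal{N}(\eta_{S_1})\,L^{-\alpha(d-1)}\,v_{d-1}\otimes\bar\chi_{\eps,L}$, which is in turn dominated by $\PLP(\delta\mu)$ once $\mathcal{N}(\eta_{S_1})\leq N\asymp\delta L^{\alpha(d-1)}$. The bad event is accordingly $A=\{\mathcal{N}(\eta_{S_1})>N\}$, a condition on the \emph{total} number of $S_1$-lines (which is Poisson with mean $\asymp u\eps^{d-1}$ by Lemma~\ref{l:s1s2intens}, whence the $\eps^{d-1}$ in the tail), not on the number that happen to reach $B_2$.
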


The above proposition is the heart of the proof of our main theorem.
We thus postpone its proof to the end of the Section and show now that it is enough to establish Theorem~\ref{thm:2boxdec}.

\begin{proof}[Proof of Theorem~\ref{thm:2boxdec}]
Using Lemma~\ref{l:bend}, Proposition~\ref{p:sprinkle}, the fact that the lines in~$\Gamma_1(\eta_{S_1'\setminus S_1})$ do not intersect~$\tilde B_1$, and the fact that~$f_1,f_2$ are increasing functions, we obtain
\begin{equation}
  \label{e:decorr1}
  \begin{split}
    \IE_u^\rho & \big[ f_1 \big( \mathcal{M}_{B_1}^{u, \rho}(\omega) \big) f_2 \big( \mathcal{M}_{B_2}^{u, \rho}(\omega) \big) \big]
    = \IE \big[ f_1 \big( \mathcal{M}_{B_1}^{u, \rho} \big(\eta + \eta_1^0 \big) \big) f_2\big(\mathcal{M}_{B_2}^{u,\rho}\big(\eta+\eta_2^0\big)\big)    \big] \\
    &\!\!\!\!\!\!\!\!\!\stackrel{\mathrm{Lemma}~\ref{l:bend}}{\leq} \IE \big[ f_1 \big( \mathcal{M}_{B_1}^{u, \rho + \eps} \big( \Gamma_1(\eta) + \eta_1^0 \big) \big) f_2 \big( \mathcal{M}_{B_2}^{u, \rho + \eps} \big(\Gamma_2(\eta) + \eta_2^0 \big) \big) \big] \\
    &= \IE \big[ f_1 \big( \mathcal{M}_{B_1}^{u, \rho + \eps} \big( \Gamma_1(\eta_{S_1}) + \eta_1^0 \big) \big) f_2 \big( \mathcal{M}_{B_2}^{u, \rho + \eps} \big( \Gamma_2(\eta) + \eta_2^0 \big) \big) \big] \\
    &= \IE \big[ f_1 \big( \mathcal{M}_{B_1}^{u, \rho + \eps} \big( \Gamma_1(\eta_{S_1}) + \eta_1^0 \big) \big) \IE \big[ f_2 \big( \mathcal{M}_{B_2}^{u, \rho + \eps} \big( \Gamma_2(\eta) + \eta_2^0 \big) \big)  \big\vert \Gamma_1(\eta_{S_1}), \eta_1^0 \big] \big] .
  \end{split}
\end{equation}
Furthermore, using Proposition~\ref{p:sprinkle}, the fact~$\Gamma_2(\eta)$ and $\eta_2^0$ are both independent from~$\eta_1^0$, and that $\|f_1\|_\infty,\|f_2\|_\infty\leq 1$, we get
\begin{equation}
  \begin{split}
    \label{e:decorr2}
    \IE_u^\rho & \big[ f_1 \big( \mathcal{M}_{B_1}^{u, \rho}(\omega) \big) f_2 \big( \mathcal{M}_{B_2}^{u, \rho}(\omega) \big) \big]\\
    \leq & \; \IE \big[ f_1 \big( \mathcal{M}_{B_1}^{u, \rho + \eps} \big( \Gamma_1(\eta_{S_1}) + \eta_1^0 \big) \big) \big]
    \IE \big[ f_2 \big( \mathcal{M}_{B_2}^{u + \delta, \rho + \eps} \big( \Gamma_2(\eta) + \eta_2^0 \big) \big) \big] \\
    & + \exp \big\{ -\useiconst{c:2boxdec} \delta \eps^{d - 1}L^{\alpha(d - 1)} \big\}
  \end{split}
\end{equation}
and since $\mathcal{M}_{B_1}^{u,\rho+\eps}\big(\Gamma_1(\eta_{S_1})+\eta_1^0 \big)$ has the same distribution as~$\mathcal{M}_{B_1}^{u,\rho+\eps}\big(\omega \big)$,
\begin{equation}
  \begin{split}
    = & \; \IE \big[f_1\big(\mathcal{M}_{B_1}^{u,\rho+\eps}\big(\omega \big) \big) \big] \IE \big[ f_2 \big( \mathcal{M}_{B_2}^{u + \delta, \rho + \eps} \big( \omega \big) \big) \big]
    +\exp\big\{ -\useiconst{c:2boxdec} \delta \eps^{d - 1}L^{\alpha(d - 1)}      \big\}
  \end{split}
\end{equation}
Equation~\eqref{e:2boxdec2} follows by an analogous argument.
\end{proof}

Now that we have demonstrated how Proposition~\ref{p:sprinkle} can be used to derive our main result, let us turn to the proof of this proposition.

We start by considering the main expectation appearing in the proposition. Using that $\eta = \eta_{S_1} + \eta_{S_1' \setminus S_1}$ and the fact that $\Gamma_2$ acts independently in each line, we can write
\begin{equation}
  \label{e:main_expectation}
  \IE \Big[ f_2 \Big( \mathcal{M}_{B_2}^{u, \rho + \eps} \big( \Gamma_2(\eta_{S_1}) + \Gamma_2(\eta_{S_1' \setminus S_1}) + \eta_2^0 \big) \Big) \Big\vert \Gamma_1(\eta_{S_1}) \Big].
\end{equation}
Observing now that $\eta_{S_1}, \eta_{S_1' \setminus S_1}$ and $\eta^0_2$ are independent, we see that the only information obtained by the conditioning is contained in the term $\eta_{S_1}$.

Note that, the detailed balance conditions in \eqref{eq:gammarevers} are also valid for the corresponding restrictions to $S_1$ and $S_1' \setminus S_1$, that is
\begin{equation}
  \label{e:etasgetaeqd}
  \begin{array}{c}
    \big( \eta_{S_1}, \Gamma_1(\eta_{S_1}) \big) \eqd \big( \Gamma(\eta_{S_1}), \eta_{S_1} \big) \quad \text{and} \quad
    \big( \eta_{S_1' \setminus S_1}, \Gamma_1(\eta_{S_1' \setminus S_1}) \big) \eqd \big( \Gamma(\eta_{S_1' \setminus S_1}), \eta_{S_1' \setminus S_1} \big).
  \end{array}
\end{equation}
Therefore, we can rewrite \eqref{e:main_expectation} as
\begin{equation}
  \label{e:main_expectation_2}
  \IE \Big[ f_2 \Big( \mathcal{M}_{B_2}^{u, \rho + \eps} \big( \Gamma_2(\Gamma_1(\eta_{S_1})) + \Gamma_2(\eta_{S_1' \setminus S_1}) + \eta_2^0 \big) \Big) \Big\vert \eta_{S_1} \Big].
\end{equation}

Based on the above calculations, the next step in our proof is to reduce Proposition~\ref{p:sprinkle} to a simpler statement.

\begin{proposition}
  \label{p:sprinkle2}
  There exists an event $A$ such that
  \begin{equation}
    \label{e:prob_A}
    \mathbb{P}[ \eta_{S_1} \in A ] \leq \exp \big\{ -\useiconst{c:2boxdec} \delta \eps^{d - 1} L^{\alpha(d - 1)} \big\},
  \end{equation}
  and moreover, in the event $\eta_{S_1} \notin A$,
  \begin{equation}
    \label{e:wiggle_dominates}
    \begin{array}{c}
      \Gamma_2 \big( \Gamma_1(\eta_{S_1}) \big) \text{ is stochastically dominated by } \eta_\delta; \text{ where } \eta_\delta \text{ is distributed as } PLP(\delta \mu),
      \\
      \text{ and is independent from } \eta_{S_1}, \, \eta_2^0, \text{ and } \Gamma_2(\eta_{S_1' \setminus S_1}).
    \end{array}
  \end{equation}
\end{proposition}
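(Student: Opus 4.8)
The plan is to show that, with overwhelming probability, the perturbed point measure $\Gamma_2(\Gamma_1(\eta_{S_1}))$ — viewed as a process of cylinders hitting $B_2$ — is a "thin" Poisson-like cloud whose intensity can be absorbed into a $\PLP(\delta\mu)$ process. First I would record what $\eta_{S_1}$ actually contains: it is a Poisson process of lines with $p_1(l) \in S_1 \subset \Pi_1$ and direction in $D_{\eps,L}$. By the sampling description in Lemma~\ref{l:danielplpsampling} (translated to $\Pi_1$), $\eta_{S_1}$ is obtained from a Poisson point process on $S_1$ with intensity $u\uc{c:mu}\,\d v_{d-1}$, together with i.i.d.\ directions sampled from $\chi$ conditioned to land in $D_{\eps,L}$. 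Its expected number of points is of order $u \cdot L^{d-1} \cdot \chi(D_{\eps,L})$, and since $\chi(D_{\eps,L})$ scales like $(\eps/L)^{d-1}$, this expectation is bounded by a dimensional constant times $u\eps^{d-1}$; in particular the event $A_0 := \{ |\eta_{S_1}| > K\eps^{d-1}\}$ for a suitable dimensional $K$ has probability at most $\exp\{-c\,\eps^{d-1}\}$ by standard Poisson tail bounds. This is not yet the bound we want, so the real work is in understanding the \emph{law of the images} of these points.

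Next I would analyze $\Gamma_2(\Gamma_1(\eta_{S_1}))$ line by line, conditionally on $\eta_{S_1}$. Fix a line $l \in \eta_{S_1}$ with $p_1(l) \in S_1$. Applying $\Gamma_1$ resamples its direction in $D_{\eps,L}$ according to $\bar\chi_{\eps,L}$ while keeping $p_1(l) \in \Pi_1$ fixed, \emph{provided} $p_1(l) \in S_1'$, which holds since $S_1 \subset S_1'$. Applying $\Gamma_2$ then resamples the direction again in $D_{\eps,L}$, this time keeping the intersection point with $\Pi_2$ fixed — but only if that intersection point lies in $S_2'$. The crucial geometric point is that after the $\Gamma_1$ step the line has a uniformly-random-ish direction in a cap of diameter $\eps/(8L)$ anchored at a point of $S_1$; projected forward a distance $L^{2+\alpha}\eps^{-1}$ to $\Pi_2$, the landing point is spread out over a region of diameter roughly $(\eps/L)\cdot L^{2+\alpha}\eps^{-1} = L^{1+\alpha}$, i.e.\ comparable to $S_2'$. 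So I would split off the bad event $A_1 := \{\text{some line of }\Gamma_1(\eta_{S_1})\text{ lands outside }S_2'\}$; on $A_1^c$ every line of $\Gamma_1(\eta_{S_1})$ does get its direction resampled by $\Gamma_2$, and the key claim is that conditionally on $\eta_{S_1}$ and on $A_0^c \cap A_1^c$, the resulting lines of $\Gamma_2(\Gamma_1(\eta_{S_1}))$ each have a direction distribution that is \emph{absolutely continuous with respect to} (a constant multiple of) the direction marginal of $\PLP(\mu)$, and whose $\Pi_2$-intersection points lie in the fixed bounded region $S_2'$. A finite collection of at most $K\eps^{d-1}$ such lines, each with bounded Radon--Nikodym derivative against the line measure $\mu$ restricted to $\{p_2 \in S_2', d \in D_{\eps,L}\}$, is stochastically dominated by a Poisson process on that region with intensity $c K \eps^{d-1} \cdot \mu$ — and here I would invoke the standard fact that a fixed finite point configuration in a region of $\mu$-mass $m$, with uniformly bounded density, is dominated by $\PLP(\lambda\mu)$ once $\lambda m$ exceeds a constant multiple of the configuration size, at the cost of an event of probability $\exp\{-c\lambda m\}$. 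Choosing $\lambda m \asymp \delta \eps^{d-1} L^{\alpha(d-1)}$ (note $\mu$-mass of the relevant region of $\Pi_2$ is $\asymp (L^{1+\alpha})^{d-1}\cdot(\eps/L)^{d-1} = \eps^{d-1}L^{\alpha(d-1)}$) yields exactly the claimed tail $\exp\{-\useiconst{c:2boxdec}\delta\eps^{d-1}L^{\alpha(d-1)}\}$, and the dominating process can be realized as $\eta_\delta \eqd \PLP(\delta\mu)$ built on auxiliary randomness independent of everything else, in particular of $\eta_{S_1}$, $\eta_2^0$, and $\Gamma_2(\eta_{S_1'\setminus S_1})$. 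The event $A := A_0 \cup A_1 \cup A_2$, where $A_2$ is the failure of the Poisson domination, then satisfies \eqref{e:prob_A}, and on $A^c$ the domination \eqref{e:wiggle_dominates} holds.

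The independence assertions in \eqref{e:wiggle_dominates} I would get for free from the construction: $\eta_{S_1}$, $\eta_{S_1'\setminus S_1}$ and $\eta_2^0$ are independent by disjointness of the underlying Poisson process supports (restriction to disjoint sets of lines), $\Gamma_2$ acts with fresh independent randomness per line, and $\eta_\delta$ is manufactured from a separate independent source. The main obstacle I anticipate is the \emph{uniform control of the Radon--Nikodym derivative} in the second paragraph: after two successive conditional resamplings ($\Gamma_1$ then $\Gamma_2$) with constrained anchor points, one must check that the law of the $\Pi_2$-parametrization $(p_2(l), d_2(l))$ of a single output line — conditioned on the input $p_1(l)$ and on landing in $S_2'$ — has a density against $\d v_{d-1}\otimes\chi|_{D_{\eps,L}}$ that is bounded above and below by dimensional constants uniformly in $\eps, L, \alpha$ (and in the input point, as it ranges over $S_1$). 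This requires carefully tracking the Jacobian of the change of variables between the $\Pi_1$- and $\Pi_2$-parametrizations on the sliver of directions $D_{\eps,L}$ and verifying it is $1 + o(1)$ for large $L$, which is the same kind of elementary trigonometry already used in Lemma~\ref{l:bend} but now applied to the density rather than to distances; I would isolate this as a separate lemma about the map $\xi$ restricted to near-vertical lines.
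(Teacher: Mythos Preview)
Your strategy is the paper's: bound $|\eta_{S_1}|$, show each image line under $\Gamma_2\circ\Gamma_1$ has small density on $\Pi_2\times D_{\eps,L}$ (the paper isolates this Jacobian estimate as Lemma~\ref{l:theta}, giving density $\leq cL^{-(1+\alpha)(d-1)}$ against $v_{d-1}\otimes\bar\chi_{\eps,L}$), then poissonize and compare intensities. But the error bookkeeping is miswired.

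The threshold in your $A_0$ is too small. With $A_0=\{|\eta_{S_1}|>K\eps^{d-1}\}$ you get $\mathbb{P}[A_0]\leq\exp\{-c\eps^{d-1}\}$, and this does \emph{not} improve with $L$; since $A\supseteq A_0$, your final bound on $\mathbb{P}[A]$ cannot be better than $\exp\{-c\eps^{d-1}\}$, which for large $L$ is strictly weaker than the target $\exp\{-c\delta\eps^{d-1}L^{\alpha(d-1)}\}$. Meanwhile, the ``standard fact'' you invoke for $A_2$ is not correct as stated: once each image line has density pointwise below the intensity of the dominating Poisson, the poissonization coupling (using $p\leq 1-e^{-2p}$ for the single-line landing probability $p$, as the paper does) is \emph{exact}, with no residual error event. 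So with your $A_0$ threshold the domination on $A_0^c$ actually holds deterministically (since $K\eps^{d-1}\ll \delta\eps^{d-1}L^{\alpha(d-1)}$ for large $L$), your $A_2$ is empty, and all the error lands on $A_0$ --- which is too large.

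The paper instead takes a single bad event $G_N^c=\{|\eta_{S_1}|>N\}$ with $N$ of order $\delta L^{\alpha(d-1)}$: this is essentially the largest $N$ for which the summed intensity of the $N$ poissonized images still lies pointwise below $\delta\mu$, so on $G_N$ the domination by $\PLP(\delta\mu)$ is exact. The Poisson tail at this much larger threshold (mean $O(u\eps^{d-1})$, threshold growing like $L^{\alpha(d-1)}$) then yields precisely $\exp\{-c\delta\eps^{d-1}L^{\alpha(d-1)}\}$. Two minor points: your $A_1$ is in fact empty for large $L$ (the landing set has diameter $\asymp L^{1+\alpha}/8\ll 2L^{1+\alpha}$, so every image lands in $S_2'$ deterministically), and only the \emph{upper} bound on the Radon--Nikodym density is needed --- the lower bound you worry about plays no role.
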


Assuming the validity of the above, we can jump to the following.

\begin{proof}[Proof of Proposition~\ref{p:sprinkle}]
  We will prove~\eqref{e:2boxdec}, since~\eqref{e:2boxdec2} has essentially the same proof, with the difference being that the sprinkling term~$\delta$ clearly cannot be larger than the parameter~$u$.

  We use \eqref{e:main_expectation} and \eqref{e:main_expectation_2} to write the main expectation as:
  \begin{equation}
    \label{e:sprinkle1}
    \begin{split}
      \IE \big[ & f_2 \big( \mathcal{M}_{B_2}^{u, \rho + \eps} \big( \Gamma_2(\eta) + \eta_2^0 \big) \big) \big\vert \Gamma_1(\eta_{S_1}) \big] \\
      &
      \begin{array}{e}
        & = &\IE \Big[ f_2 \Big( \mathcal{M}_{B_2}^{u, \rho + \eps} \big( \Gamma_2(\Gamma_1(\eta_{S_1})) + \Gamma_2(\eta_{S_1' \setminus S_1}) + \eta_2^0 \big) \Big) \Big\vert \eta_{S_1} \Big] \\
        & \overset{\eqref{e:etasgetaeqd}}= &\IE \Big[ f_2 \Big( \mathcal{M}_{B_2}^{u, \rho + \eps} \big( \Gamma_2(\Gamma_1(\eta_{S_1})) + \eta_{S_1' \setminus S_1} + \eta_2^0 \big) \Big) \Big\vert \eta_{S_1} \Big] \\
        & \overset{\eqref{e:prob_A}}= & \IE \Big[ f_2 \Big( \mathcal{M}_{B_2}^{u, \rho + \eps} \big( \Gamma_2(\Gamma_1(\eta_{S_1})) + \eta_{S_1' \setminus S_1} + \eta_2^0 \big) \Big) {\bf 1}_A \Big\vert \eta_{S_1} \Big] + {\bf 1} \{\eta_{S_1} \in A\} \\
        & \overset{\eqref{e:wiggle_dominates}}\leq & \IE \Big[ f_2 \Big( \mathcal{M}_{B_2}^{u, \rho + \eps} \big( \eta_\delta + \eta_{S_1' \setminus S_1} + \eta_2^0 \big) \Big) \Big\vert \eta_{S_1} \Big] + {\bf 1} \{\eta_{S_1} \in A\} \\
        & \leq & \IE \big[ f_2 \big( \mathcal{M}_{B_2}^{u + \delta, \rho + \eps} \big( \Gamma_2(\eta) + \eta_2^0 \big) \big) \big] + {\bf 1} \{\eta_{S_1} \in A\},
      \end{array}
    \end{split}
  \end{equation}
  as desired.
\end{proof}

We are now left with the proof of Proposition~\ref{p:sprinkle2}, which in turn will be based on a comparison of the intensities of Poisson Point Processes.
Therefore it is natural to start with the estimate of the measure of $S_i \times D_{\eps, L}$ below.

\newiconst{c:s1s2intens}

\begin{lemma}
  \label{l:s1s2intens}
  There exists a constant~$\useiconst{c:s1s2intens}>0$ such that for~$i=1,2$,
  \begin{equation}
    \label{e:s1s2intens}
    \tilde\mu\left(  S_i \times D_{\eps,L}\right) = \useiconst{c:s1s2intens} \eps^{d-1}\Big(1-\frac{\eps^2}{256L^2}\Big)^{\frac{d-1}{2}}.
  \end{equation}
\end{lemma}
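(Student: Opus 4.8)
The plan is to compute the product measure $\tilde\mu(S_i\times D_{\eps,L})$ directly from its definition $\tilde\mu=\xi^{-1}_*(\d v_{d-1}\otimes\chi)$, exploiting that under the parametrization $\xi$ the measure factorizes into a Lebesgue piece on the hyperplane coordinate and the spherical-cap piece $\chi$ on the direction. First I would note that $\xi$ maps $\{l:p_i(l)\in S_i,\ d_i(l)\in D_{\eps,L}\}$ bijectively onto $(S_i$ viewed as a $(d-1)$-cube$)\times D_{\eps,L}$, so that
\begin{equation*}
  \tilde\mu\big(S_i\times D_{\eps,L}\big)=v_{d-1}\big([-2L,2L]^{d-1}\big)\cdot\chi\big(D_{\eps,L}\big)=(4L)^{d-1}\,\chi\big(D_{\eps,L}\big).
\end{equation*}
Here I am using that the intersection-point coordinate of $S_i$ is the cube $[-2L,2L]^{d-1}$ regardless of $i$, which is why the right-hand side of \eqref{e:s1s2intens} does not depend on $i$.

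The remaining task is to evaluate $\chi(D_{\eps,L})=\useiconst{c:phi}\int_{D_{\eps,L}}\langle w,{\bf e}_d\rangle\,\sigma(\d w)$, where $D_{\eps,L}$ is the spherical cap of Euclidean diameter $\eps/(8L)$ centred at ${\bf e}_d$, i.e.\ the set of $w\in S^{d-1}$ with $|w-{\bf e}_d|<\eps/(8L)$. The clean way is to use the coarea/slicing formula for the sphere: writing $w=(\sqrt{1-h^2}\,\theta,h)$ with $h=\langle w,{\bf e}_d\rangle\in(h_0,1]$ and $\theta\in S^{d-2}$, one has $\sigma(\d w)=(1-h^2)^{(d-3)/2}\,\sigma_{d-2}(\d\theta)\,\d h$, and the constraint $|w-{\bf e}_d|^2=2(1-h)<\eps^2/(64L^2)$ becomes $h>h_0:=1-\eps^2/(128L^2)$. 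Therefore
\begin{equation*}
  \chi\big(D_{\eps,L}\big)=\useiconst{c:phi}\,\sigma_{d-2}(S^{d-2})\int_{h_0}^1 h\,(1-h^2)^{(d-3)/2}\,\d h
  =\useiconst{c:phi}\,\sigma_{d-2}(S^{d-2})\cdot\frac{(1-h_0^2)^{(d-1)/2}}{d-1},
\end{equation*}
where the last step is the elementary antiderivative $\int h(1-h^2)^{(d-3)/2}\d h=-\tfrac{1}{d-1}(1-h^2)^{(d-1)/2}$. It remains to simplify $1-h_0^2=(1-h_0)(1+h_0)=\tfrac{\eps^2}{128L^2}\big(2-\tfrac{\eps^2}{128L^2}\big)=\tfrac{\eps^2}{64L^2}\big(1-\tfrac{\eps^2}{256L^2}\big)$, so that $(1-h_0^2)^{(d-1)/2}=\big(\tfrac{\eps}{8L}\big)^{d-1}\big(1-\tfrac{\eps^2}{256L^2}\big)^{(d-1)/2}$. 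Collecting everything,
\begin{equation*}
  \tilde\mu\big(S_i\times D_{\eps,L}\big)=(4L)^{d-1}\cdot\frac{\useiconst{c:phi}\,\sigma_{d-2}(S^{d-2})}{(d-1)\,8^{d-1}}\cdot\eps^{d-1}\Big(1-\frac{\eps^2}{256L^2}\Big)^{\frac{d-1}{2}},
\end{equation*}
and setting $\useiconst{c:s1s2intens}:=\useiconst{c:phi}\,\sigma_{d-2}(S^{d-2})\,(4/8)^{d-1}/(d-1)=\useiconst{c:phi}\,\sigma_{d-2}(S^{d-2})\,2^{1-d}/(d-1)$ — a constant depending only on $d$ — gives exactly \eqref{e:s1s2intens}.

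I do not expect any serious obstacle here; this is a bookkeeping computation. The only points requiring a little care are (a) checking that $\xi$ restricted to the relevant family of lines is a measurable bijection onto the product set, so that the pushforward really does factorize as claimed — this is immediate from \eqref{e:2ndplpdef} and the definition of $\tilde\mu$; and (b) getting the Jacobian of the spherical slicing right, including the factor $\langle w,{\bf e}_d\rangle=h$ coming from the density of $\chi$ in \eqref{e:defspheremes}, which is precisely what makes the integral elementary and produces the clean closed form. One should also keep $L$ large enough (equivalently $\eps/L$ small) only insofar as it is needed elsewhere; the identity \eqref{e:s1s2intens} itself is exact and valid for all $\eps\in(0,1)$, $L>0$.
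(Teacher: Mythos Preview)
Your proof is correct and follows essentially the same approach as the paper: both factor $\tilde\mu(S_i\times D_{\eps,L})=(4L)^{d-1}\chi(D_{\eps,L})$ and then compute $\chi(D_{\eps,L})$ by integrating the density $\langle w,{\bf e}_d\rangle$ over the cap. The only cosmetic difference is that the paper uses spherical angular coordinates $(\phi_1,\dots,\phi_{d-2},\psi)$ and arrives at $c\sin^{d-1}\big(\arccos(1-\eps^2/128L^2)\big)$, whereas you use the height-slicing substitution $h=\cos\phi_1$ directly; the two computations are related by this change of variable and yield the same closed form.
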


\begin{proof}
  We know by the definition of~$\tilde \mu$ that
  \[
    \tilde\mu\left(  S_i \times D_{\eps,L}\right) = 4^{d-1}L^{d-1}\chi(D_{\eps,L}),
  \]
  so that we need only to properly estimate~$\chi(D_{\eps,L})$ in order to prove the result. Using the Law of cosines and spherical coordinates with~${\bf e}_d$ as the north pole, we can parametrize~$D_{\eps,L}$ as
  \begin{equation}
    D_{\eps,L}:=\left\{
      \begin{array}{c}
	r=1,(\phi_1,\dots,\phi_{d-2})\in[0,\pi]^{d-2},\psi\in[0,2\pi]; \\ \phi_1\leq\arccos\left(1-\frac{\eps^2}{128L^2}\right)
      \end{array}
    \right\}.
  \end{equation}
  Equation~\eqref{e:defspheremes} then implies
  \begin{equation}
    \label{e:chidepsl}
    \begin{split}
      \chi (D_{\eps,L})
      &=\useiconst{c:phi}\int_{[0,2\pi]}\int_{[0,\pi]^{d-3}}\left(\int_{0}^{\arccos(1-\eps^2/128L^2)}\!\!\!\!\cos (\phi_1) \sin^{d-2}(\phi_1 )  \d\phi_1 \right)
      \\
      &\phantom{*************}\times \sin^{d-3}(\phi_2)\dots \sin(\phi_{d-2}) \d \phi_2\dots \d \phi_{d-2} \d \psi
      \\
      &=c\sin^{d-1}\left(    \arccos(1-\eps^2/128L^2)          \right)
            =
      c\frac{\eps^{d-1}}{L^{d-1}}\Big(1-\frac{\eps^2}{256L^2}\Big)^{\frac{d-1}{2}}
      ,
    \end{split}
  \end{equation}
  which finishes the proof of the lemma.
\end{proof}

For the proof Proposition~\ref{p:sprinkle2} we will need a lemma quantifying the influence that each line of~$\eta_{S_1}$ has on~$\Gamma_2(\Gamma_1 (\eta))$.
Consider a line~$l \in \eta_{S_1}$ with parameters $(p_1(l),d_1(l))$. We first apply~$\Gamma_1$ to it in order to obtain a line with parameterization~$(p_1(l),d_1'(l))$ belonging to~$\Gamma(\eta_{S_1})$. Note that this stochastic operation \emph{changes the intersection point of}~$l$ \emph{with}~$\Pi_2$. We then apply~$\Gamma_2$ to the resulting line. We denote this stochastic operation by~$\Gamma_2\circ\Gamma_1$. Informally, the next lemma shows that this operation greatly dilutes the information carried by conditioning on~$l$. \newiconst{c:intersecdens}

\begin{lemma}
  \label{l:theta}
  Consider~$l\in \eta_{S_1}$. Denote by~$\Gamma_2\circ\Gamma_1(l)$ the line in~$\Gamma_2(\Gamma_1(\eta))$ corresponding to~$l$ in~$\eta_{S_1}$, and by $\bar\chi_{\eps,L}$ the distribution $\chi$ conditioned on sampling from $D_{\eps,L}$. There exists a constant~$\useiconst{c:intersecdens}>0$ such that for every~$p\in \Pi_1$,~$d\in D_{\eps,L}$ and sufficiently large~$L$,
  \begin{equation}
    \label{e:intersecdens}
    \begin{split}
      \lefteqn{\IP \left(  p_2(\Gamma_2\circ \Gamma_1(l)) \in A   , d_2(\Gamma_2\circ \Gamma_1(l)) \in B \middle \vert p_1(l)=p,d_1(l)=d \right)}\phantom{****************}
      \\
      &\leq \useiconst{c:intersecdens}L^{-(1+\alpha)(d-1)}  \int {\bf 1}_A \d v_{d-1}\cdot  \bar\chi_{\eps,L}(B),
    \end{split}
  \end{equation}
  for every Borelian subsets~$A\subseteq \Pi_2$, $B\subseteq D_{\eps,L}$.
\end{lemma}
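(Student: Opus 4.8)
The plan is to track explicitly the two-step composition $\Gamma_2 \circ \Gamma_1$ and bound the resulting density of the landing point $p_2(\Gamma_2 \circ \Gamma_1(l))$ in $\Pi_2$. First I would observe that the direction part factorizes trivially: by the definition \eqref{eq:gammadef} of $\Gamma_2$, conditionally on everything that happened under $\Gamma_1$, the operation $\Gamma_2$ re-samples the direction of the line from $\bar\chi_{\eps,L}$ independently — and this is exactly the law of $d_2(\Gamma_2 \circ \Gamma_1(l))$, independent of its intersection point $p_2$. So the claimed product structure on the right-hand side of \eqref{e:intersecdens} is built in, and the content of the lemma is the bound $c\, L^{-(1+\alpha)(d-1)}$ on the density of $p_2$ with respect to $\d v_{d-1}$ on $\Pi_2$.

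Next I would compute that density. After $\Gamma_1$, the line $l$ has a fixed intersection point $p_1(l) = p \in S_1$ with $\Pi_1$ (unchanged, since $p \in S_1 \subset S_1'$), and a fresh direction $d_1'(l) = v' \in D_{\eps,L}$ sampled from $\bar\chi_{\eps,L}$. This line hits $\Pi_2$ — which sits at vertical distance $h := L^{2+\alpha}\eps^{-1}$ above $\Pi_1$ — at the point $p + (h / v'_d)\, (v' - v'_d \mathbf{e}_d)$ where I write $v'_d = \langle v', \mathbf{e}_d\rangle$; that is, the horizontal displacement from $p$ is $(h/v'_d)$ times the horizontal part of $v'$. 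Since $v' \in D_{\eps,L}$, the horizontal part of $v'$ ranges over a $(d-1)$-dimensional ball of radius $\asymp \eps/(8L)$ around the origin, and $v'_d \in [1 - \eps^2/(128L^2), 1]$ is essentially $1$. Therefore $p_2(\Gamma_1(l))$, as $v'$ varies, is spread over a $(d-1)$-dimensional region of diameter $\asymp h \cdot \eps/(8L) = L^{1+\alpha}/8$ centered near $p$ (in particular inside $S_2'$ for large $L$, consistent with the Roadmap). The pushforward of $\bar\chi_{\eps,L}$ under the smooth map $v' \mapsto$ (horizontal displacement) has a density on $\R^{d-1}$; since $\bar\chi_{\eps,L}$ has a bounded density with respect to Lebesgue measure on the cap $D_{\eps,L}$ (the factor $\langle w, \mathbf{e}_d\rangle$ in \eqref{e:defspheremes} is bounded above and below on $D_{\eps,L}$), and the Jacobian of $v' \mapsto (h/v'_d)(v' - v'_d\mathbf{e}_d)$ is of order $(h)^{d-1}$ (bounded above and below, uniformly in $v' \in D_{\eps,L}$, for large $L$), the density of $p_2(\Gamma_1(l))$ with respect to $\d v_{d-1}$ is bounded by $c\, h^{-(d-1)} = c\, \eps^{d-1} L^{-(2+\alpha)(d-1)} \le c\, L^{-(1+\alpha)(d-1)}$, uniformly in $p \in S_1$. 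Finally, applying $\Gamma_2$ to this line either leaves $p_2$ unchanged (if $p_2 \notin S_2'$, but as noted this does not occur for large $L$) or, if $p_2 \in S_2'$, fixes $p_2$ and re-samples only the direction — so $\Gamma_2$ does not alter the marginal density of $p_2$ at all. Composing gives the desired bound on the joint law, and integrating against the Borel sets $A, B$ yields \eqref{e:intersecdens}.

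The main obstacle I expect is the change-of-variables/Jacobian bookkeeping: one must verify that the map $v' \mapsto (h/v'_d)(v' - v'_d \mathbf{e}_d)$ from the cap $D_{\eps,L} \subset S^{d-1}$ onto its image in $\R^{d-1}$ is a diffeomorphism with Jacobian comparable to $h^{d-1}$ uniformly over $v' \in D_{\eps,L}$, using that $v'_d$ stays within $\eps^2/(128L^2)$ of $1$ so all the correction factors are $1 + O(\eps^2/L^2)$, hence harmless for $L$ large. This is exactly the same elementary trigonometry underlying Lemma~\ref{l:bend} and the displays \eqref{e:item3theta}, just carried one step further to the far plane $\Pi_2$; I would state the comparison of Jacobians as a short computation and not belabor the constants, since only their dependence on $d$ (and not on $\eps, \alpha, L$) matters for the final decoupling bound. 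One small point worth stating cleanly: the factor $L^{-(1+\alpha)(d-1)}$ in the statement is deliberately weaker than the $h^{-(d-1)} = \eps^{d-1} L^{-(2+\alpha)(d-1)}$ one actually gets; keeping the stated (looser) form is harmless and simplifies later bookkeeping, so I would simply note $\eps^{d-1} L^{-(2+\alpha)(d-1)} \le L^{-(1+\alpha)(d-1)}$ for $\eps \in (0,1)$, $L \ge 1$.
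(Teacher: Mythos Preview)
Your approach is essentially the paper's: compute the Jacobian of the map $v' \mapsto p_2(\Gamma_1(l))$ (the paper works with the inverse map $\pi_{p,\mathbb{D}}\colon x \mapsto (x-p)/|x-p|$ from $\Pi_2$ to $\mathbb{D}$, but this is cosmetic), then observe that for large $L$ one has $p_2(\Gamma_1(l))\in S_2'$, so that $\Gamma_2$ fixes $p_2$ and independently resamples the direction from $\bar\chi_{\eps,L}$. However, there is a bookkeeping slip in your density computation that happens to leave the conclusion intact but makes both your intermediate density bound and your closing remark incorrect.

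You assert that $\bar\chi_{\eps,L}$ ``has a bounded density with respect to Lebesgue measure on the cap'', citing that $\langle w, {\bf e}_d\rangle$ is bounded on $D_{\eps,L}$. But the density of $\bar\chi_{\eps,L}$ with respect to surface measure on $D_{\eps,L}$ is $\useiconst{c:phi}\langle w,{\bf e}_d\rangle/\chi(D_{\eps,L})$, and by \eqref{e:chidepsl} the normalizing constant $\chi(D_{\eps,L})$ is of order $(\eps/L)^{d-1}$, so this density is of order $(L/\eps)^{d-1}$, not $O(1)$. Consequently the density of $p_2(\Gamma_1(l))$ on $\Pi_2$ is
\[
c\,(L/\eps)^{d-1}\cdot h^{-(d-1)} \;=\; c\,(L/\eps)^{d-1}\cdot \eps^{d-1} L^{-(2+\alpha)(d-1)} \;=\; c\,L^{-(1+\alpha)(d-1)},
\]
which is precisely the bound in \eqref{e:intersecdens}. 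So the stated exponent is not a deliberate weakening of a sharper $h^{-(d-1)}$ bound; it is the actual outcome of the change of variables once the normalization of $\bar\chi_{\eps,L}$ is carried through. The paper makes this explicit by keeping the factor $1/\chi(D_{\eps,L})$ in the change-of-variables integral and arriving directly at $cL^{-(d-1)(1+\alpha)}\int{\bf 1}_A\,\d v_{d-1}$.
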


\begin{proof}
Consider the projection
\begin{equation}
  \label{e:pid}
  \pi_{p,\mathbb{D}}: \Pi_2 \to \mathbb{D} \text{ taking } x \text{ and mapping to } \frac{x-p}{|x-p|},
\end{equation}
and notice that it is actually a bijection.
\begin{figure}[ht]
	\centering
	\includegraphics[scale = 1]{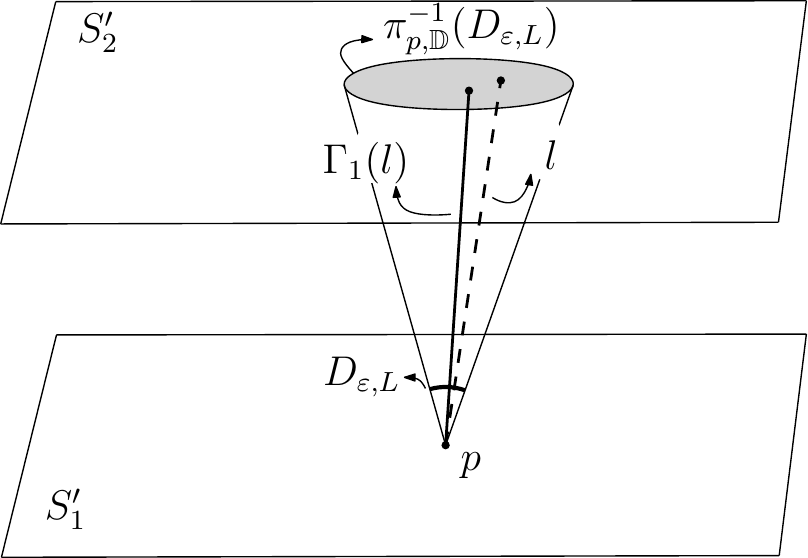}
	\vspace{0.1cm}
	\caption{The figure shows schematically how~$\Gamma_1$ acts on a line~$l$.}
	\label{f:piepsd}
\end{figure}
Writing
\[
x=(x_1,\dots,x_{d-1},L+L^{2+\alpha}\eps^{-1}),\quad\quad p=(p_1,\dots,p_{d-1},L),
\]
we can compute the partial derivative of~$\pi_{p,\mathbb{D}}$ in the~$j$-th direction: for~$j=1,\dots,d-1$,
\begin{equation}
\label{e:dpid}
\begin{split}
\partial_j \pi_{p,\mathbb{D}}(x)
&=
\frac{{\bf e}_i}{|x-p|}-\frac{(x-p)(x_{j}-p_j)}{|x-p|^3}.
\end{split}
\end{equation}
In particular, since~$|x-p|\geq L^{2+\alpha}\eps^{-1}$, each coordinate of~$\partial_j \pi_{p,\mathbb{D}}$ is bounded from above in absolute value by~$2L^{-(2+\alpha)}\eps$ for large enough $L$, which yields
\begin{equation}
\label{e:dpid2}
|\det \d \pi_{p,\mathbb{D}}| \leq c L^{-(d-1)(2+\alpha)}\eps^{d-1}.
\end{equation}
Conditioned on the fact that~$p_1(l)=p\in  S_1$, $p_2(\Gamma_1(l))$ takes value on $\pi_{p,\mathbb{D}}^{-1}(D_{\eps,L})$, and, by construction, its direction~$d_1(\Gamma_1(l))=d_2(\Gamma_1(l))$ is independent from~$d_1(l)$. Furthermore, $\pi_{p,\mathbb{D}}(p_2(\Gamma_1(l)))$ is distributed according to~$\bar\chi_{\eps,L}$. We then have, by the change of variables formula, Equations~\eqref{e:defspheremes}, \eqref{e:chidepsl}, \eqref{e:dpid2}, and the definition of~$\bar\chi_{\eps,L}$, for some Borelian set~$A\subset \pi_{p,\mathbb{D}}^{-1}(D_{\eps,L})$,
\begin{equation}
\label{e:pid2}
\begin{split}
\lefteqn{\IP\left(p_2( \Gamma_1(l))\in A \middle \vert p_1(l)=p,d_1'(l)=d \right)}\phantom{**************}
\\
&=
\frac{\useiconst{c:phi}}{\chi(D_{\eps,L})}\int_{\pi_{p,\mathbb{D}}(A)} \langle w, {\bf e}_d\rangle \sigma(\d w)
\\
&=
\frac{\useiconst{c:phi}}{\chi(D_{\eps,L})}\int_{A} \langle \pi_{p,\mathbb{D}}(x), {\bf e}_d\rangle
|\det \d \pi_{p,\mathbb{D}}|
\d v_{d-1} (x)
\\
&\leq c L^{-(d-1)(1+\alpha)} \int{\bf 1}_A
\d v_{d-1}.
\end{split}
\end{equation}
We note that, by definition,
\[
p_2(\Gamma_2 \circ \Gamma_1(l)) = p_2( \Gamma_1(l)).
\]
Also, by the definition of the sets~$S_1$ and~$S_2'$, as well as elementary trigonometry, we must have~$p_2( \Gamma_1(l))\in S_2'$ for sufficiently large~$L$.
This implies, by the construction of~$\Gamma_2$, that~$d_2(\Gamma_2\circ \Gamma_1(l)) $ is independent from these random elements and distributed according to~$\bar\chi_{\eps,L}$.
\end{proof}

We can now prove Proposition~\ref{p:sprinkle2}.
\begin{proof}[Proof of Proposition~\ref{p:sprinkle2}]

Let~$(l_k)_{k=1}^{\mathcal{N}(\eta_{S_1})}$ denote the collection of lines of~$\eta_{S_1}$. We have
\begin{equation}
\begin{split}
\label{e:lkN12}
{\IE \big[f_2\big(\mathcal{M}_{B_2}^{u,\rho+\eps}\big(\Gamma_2(\Gamma_1(\eta))+\eta_2^0\big)\big)  \big\vert \eta_{S_1} \big]}
=
\IE \left[f_2\big(\mathcal{M}_{B_2}^{u,\rho+\eps}\big(\Gamma_2(\Gamma_1(\eta))+\eta_2^0\big)\big)  \big\vert(l_k)_{k=1}^{\mathcal{N}(\eta_{S_1})}\right].
\end{split}
\end{equation}
Note that~$\Gamma_2\circ\Gamma_1$ acts independently on each line of~$(l_k)_{k=1}^{\mathcal{N}(\eta_{S_1})}$ by construction, and that by Lemma~\ref{l:s1s2intens}, the variable~$\mathcal{N}(\eta_{S_1})$ denoting the number of lines in~$\eta_{S_1}$ has Poisson distribution with parameter bounded from above by~$\uc{c:s1s2intens}u\eps^{d-1}$. Let~$G_N$ denote the event where~$\mathcal{N}(\eta_{S_1})\leq N\in\N$. By Lemma~\ref{l:s1s2intens}, Equations~\eqref{e:lkN12} and the fact that
\[
\mathcal{M}_{B_2}^{u,\rho+\eps}(\Gamma_2(\Gamma_1(\eta))+\eta_2^0)\eqd\mathcal{M}_{B_2}^{u,\rho+\eps}(\omega),
\]
and that~$\|f_2\|_\infty\leq 1$, we obtain
\begin{equation}
\begin{split}
\label{e:lkN12part2}
\lefteqn{\IE \left[f_2\big(\mathcal{M}_{B_2}^{u,\rho+\eps}\big(\Gamma_2(\Gamma_1(\eta))+\eta_2^0\big)\big)  \big\vert(l_k)_{k=1}^{\mathcal{N}(\eta_{S_1})}\right]}\phantom{****}
\\
&\leq \IP_u^{\rho+\eps}\left(     G_N^C      \right)+
\IE \left[{\bf 1}_{G_N} \IE \left[
f_2\big(\mathcal{M}_{B_2}^{u,\rho+\eps}\big(\Gamma_2(\Gamma_1(\eta))+\eta_2^0  \big)\big)  \big\vert(l_k)_{k=1}^{\mathcal{N}(\eta_{S_1})}\right]\right]
\\
&\leq c\exp\left\{  -\useiconst{c:s1s2intens}\eps^{d-1}N      \right\}
+
\IE \left[{\bf 1}_{G_N} \IE \left[
f_2\big(\mathcal{M}_{B_2}^{u,\rho+\eps}\big(\Gamma_2(\Gamma_1(\eta))+\eta_2^0\big)\big)  \big\vert(l_k)_{k=1}^{\mathcal{N}(\eta_{S_1})}\right]\right].
\end{split}
\end{equation}

We aim to show that, on~$G_N$, with a suitably chosen~$N$, the subset of lines of~$\Gamma_2\circ\Gamma_1((l_k)_{k=1}^{\mathcal{N}(\eta_{S_1})})$ that actually intersect~$ S_2$ can be dominated by a Poisson point process of lines with distribution~$\PLP(\delta \mu)$. The idea is based on a ``poissonization'' argument: we use a Poisson process to stochastically dominate the binomial process of lines that~$\Gamma_2(\Gamma_1(\eta_{S_1}))$ conditioned on~$\eta_{S_1}$ generates on~$\Gamma_2(\eta)$. We do this in order to simplify the computations and to make the later comparison to the process with the distribution~$\PLP(\delta \mu)$ straightforward.

As in Lemma~\ref{l:theta}, we write~$\Gamma_2\circ\Gamma_1(l_k)$ to denote the resulting line after $\Gamma_2\circ\Gamma_1$ acts on~$l_k\in \eta_{S_1}$. Lemma~\ref{l:theta} implies
\begin{equation}
\label{e:mu2stochdom1}
\begin{split}
\IP\left(   p_2(   \Gamma_2\circ\Gamma_1(l_k)   )    \in   S_2   \middle \vert l_k\right) \leq
4^{d-1}\useiconst{c:intersecdens} L^{-\alpha(d-1)},
\end{split}
\end{equation}
and therefore, for sufficiently large~$L$,
\begin{equation}
\label{e:mu2stochdom2}
\begin{split}
\IP\left(   p_2(   \Gamma_2\circ\Gamma_1(l_k)   )    \in   S_2    \middle \vert l_k\right) \leq
1-\exp\left\{-2\IP\left(   p_2(   \Gamma_2\circ\Gamma_1(l_k)   )    \in   S_2    \middle \vert l_k\right)\right\}.
\end{split}
\end{equation}
Consider the random measure~$\IP^{l_k}$ such that for~$A\subset  S_2$, $B\subset D_{\eps,L}$,
\[
\IP^{l_k}(A\times B)=\IP\left(   p_2(   \Gamma_2\circ\Gamma_1(l_k)   )    \in  A,  d_2(   \Gamma_2\circ\Gamma_1(l_k)   )    \in  B \middle \vert l_k \right) .
\]
Considering each line~$l\in\LL$ to be parametrized by their intersection point with $p_2(l)\in \Pi_2$ and their direction~$d_2(l)\in D$, we can construct a Poisson point process~$\eta_{l_k}$ in~$\LL_{ S_2}$ with intensity measure~$2  \IP^{l_k}$. Furthermore, by~\eqref{e:mu2stochdom2}, we can consider~$\eta_{l_k}$ to be coupled to~$\Gamma_2\circ\Gamma_1(l_k) $ so that whenever~$\Gamma_2\circ\Gamma_1(l_k) $ intersects~$  S_2$, $\Gamma_2\circ\Gamma_1(l_k) $ is a line in~$\eta_{l_k}$. To see this, note that a line in~$\eta_{l_k}$, if one such line exists, has the same distribution as~$\Gamma_2\circ\Gamma_1(l_k) $ conditioned on intersecting~$S_2$. One can then sample~$\Gamma_2\circ\Gamma_1(l_k)\cap \LL_{ S_2} $ by first sampling~$\eta_{l_k}$, then, on the event where~$\eta_{l_k}\neq \varnothing$, selecting a line~$l_*$ in the support of~$\eta_{l_k}$ uniformly at random and letting
\begin{equation}
\nonumber
\begin{split}
\label{e:etalkcoup}
\Gamma_2\circ\Gamma_1(l_k)\cap \LL_{ S_2}=
\left\{
\begin{array}{l}
l_* \text{ with probability }
\frac{\IP\left(   p_2(   \Gamma_2\circ\Gamma_1(l_k)   )    \in   S_2    \middle \vert l_k\right)}{1-\exp\left\{-2\IP\left(   p_2(   \Gamma_2\circ\Gamma_1(l_k)   )    \in   S_2    \middle \vert l_k\right)\right\}};
\\
\varnothing \text{ with probability }
1-\frac{\IP\left(   p_2(   \Gamma_2\circ\Gamma_1(l_k)   )    \in   S_2    \middle \vert l_k\right)}{1-\exp\left\{-2\IP\left(   p_2(   \Gamma_2\circ\Gamma_1(l_k)   )    \in   S_2    \middle \vert l_k\right)\right\}}.
\end{array}
\right.
\end{split}
\end{equation}
If~$\Gamma_2\circ\Gamma_1(l_k)\cap \LL_{ S_2}=\varnothing$, we sample~$\Gamma_2\circ\Gamma_1(l_k)$ independently from~$\eta_{l_k}$ conditioned on intersecting~$\LL_{\Pi_2\setminus S_2}$. We note that we can construct the above coupling independently for each~$l_k\in\Gamma_1(\eta_{S_1})$.

By~\eqref{e:chidepsl} and~\eqref{e:intersecdens}, we obtain that uniformly over all possible collections $(l_k)_{k=1}^{\mathcal{N}(\eta_{S_1})}$, the intensity measure of the process
\[
\sum_{k=1}^{\mathcal{N}(\eta_{S_1})}\eta_{l_k}
\]
is bounded from above in~$G_N$ by
\[
\tilde c N L^{-\alpha(d-1)}  \cdot   v_{d-1}\otimes \bar\chi_{\eps,L},
\]
for some~$\tilde c>0$, where we consider~$  v_{d-1}$ to be the Lebesgue measure on the plane~$\Pi_2$. Let~$N:=\lfloor \useiconst{c:mu} \tilde c^{-1}\delta L^{\alpha(d-1)}\rfloor$. Using Lemma~\ref{l:danielplpsampling} and elementary properties of the Poisson process, we can construct a process~$\tilde \omega_\delta $ with distribution~$\PLP(\delta \mu)$ such that,on~$G_N$,
\[
(\Gamma_2\circ \Gamma_1(l_k))_{l=1}^{\mathcal{N}(\eta_{S_1})}\subset \tilde \omega_\delta\quad \text{ and }\quad (l_k)_{l=1}^{\mathcal{N}(\eta_{S_1})} \perp \tilde \omega_\delta.
\]
Given~$B,B'\subset B_2$ and~$m,m'\in\N$, we define
\[
(B,m)\oplus(B',m'):=(B\cup B,m+m').
\]
We then obtain from~\eqref{e:lkN12part2},
\begin{equation}
\begin{split}
\label{e:lkN12part3}
\lefteqn{\IE \left[f_2\big(\mathcal{M}_{B_2}^{u,\rho+\eps}\big(\Gamma_2(\eta)+\eta_2^0\big)\big)  \big\vert(l_k)_{k=1}^{\mathcal{N}(\eta_{S_1})}\right]}\phantom{*}
\\
&\leq c\exp\left\{  -c'\eps^{d-1}\delta L^{\alpha(d-1)}      \right\}
+
\IE \big[
f_2\big(\mathcal{M}_{B_2}^{u,\rho+\eps}\big(\Gamma_2(\eta)+\eta_2^0\big)\oplus\mathcal{M}_{B_2}^{\delta,\rho+\eps}\big( \tilde \omega_\delta\big)\big)  \big]
\\
&\leq c\exp\left\{  -c'\eps^{d-1}\delta L^{\alpha(d-1)}      \right\}
+
\IE \big[
f_2\big(\mathcal{M}_{B_2}^{u+\delta,\rho+\eps}\big(  \omega\big)\big)  \big],
\end{split}
\end{equation}
finishing the proof of the result.
\end{proof}

\section{3-Box decoupling}
\label{s:3box}

Theorem~\ref{thm:2boxdec} is unfortunately not strong enough for our (and possible future) applications: it requires too large a distance between the two boxes in order to be useful in multi-scale arguments.

For illustrative purposes, imagine a standard multi-scale proof with a sequence of scales~$(L_k)_{k\geq 0}$, where the occurrence of a \emph{bad event} in a box at the $(k+1)$-th scale implies the occurrence of two analogous events at the~$k$-th scale in two boxes far away from each other. Denoting by~$p_k$ the probability of the bad event at scale~$k$, one gets the general inequality after ignoring the sprinkling terms:
\[
p_{k+1}\leq    ({\rm combinatorial }\, {\rm complexity })_{k+1} \left( p_k^2 + ({\rm decoupling }\, {\rm error })_{k+1} \right) .
\]
The problem is, in order to use Theorem~\ref{thm:2boxdec}, the scales must grow very fast: we must have~$L_{k+1} \gg L_k^{2+\alpha}$. This fast growth makes the combinatorial complexity too large, outweighing the influence of the exponent~$2$ in the term~$p_k^2$. We will therefore need a stronger decoupling result relating three boxes. More than that, we shall see that we will need three~\emph{sufficiently unaligned boxes} in order to translate the arguments in Theorem~\ref{thm:2boxdec} into this new context.
This is the subject of our next result.

\begin{theorem}
\label{thm:3boxdec} \newiconst{c:3boxdec}
For~$\alpha,\eps\in(0,1)$ and~$L\in\R_+$ sufficiently large, let~$x_1,x_2,x_3\in \R^d$ that are sufficiently ``far apart'':
\begin{equation}
\label{e:3boxdechyp0}
|x_1-x_2|,|x_1-x_3|,|x_2-x_3|\geq \eps^{-1}L^{2+\alpha},
\end{equation}
and ``unaligned'':
\begin{equation}
\label{e:3boxdechyp}
\sqrt{2} \geq \dist \left( \frac{x_1-x_2}{|x_1-x_2|}, \frac{x_1-x_3}{|x_1-x_3|}\right)\geq 30\sqrt{d} \frac{\eps}{L}.
\end{equation}
Define ~$B_i:=B_\infty(x_i,L)$, for~$i=1,2,3$.
Then there exists a constant~$\useiconst{c:3boxdec}>0$ depending only on the dimension~$d$ such that, for~$\delta>0$ and increasing functions
\[
f_i: \Omega \to [0,1], \text{ measurable with respect to $\sigma(\{\mathcal{M}_{B_i}^{u,\rho}(\omega);u,\rho\in\R_+\})$}
\]
for~$i=1,2,3$, we have
\begin{equation}
\label{e:3boxdec}
\begin{split}
  \IE \big[ f_1 & \big( \mathcal{M}_{B_1}^{u, \rho}(\omega) \big) f_2 \big( \mathcal{M}_{B_2}^{u, \rho}(\omega) \big) f_3 \big( \mathcal{M}_{B_3}^{u, \rho}(\omega) \big) \big]
\\
&\leq
\IE \big[ f_1\big(\mathcal{M}_{B_1}^{u + \delta, \rho + \eps}(\omega) \big) \big]
\IE \big[ f_2\big(\mathcal{M}_{B_2}^{u + \delta, \rho + \eps}(\omega) \big) \big]
\IE \big[ f_3\big(\mathcal{M}_{B_3}^{u + \delta, \rho + \eps}(\omega) \big) \big]
\\
&\quad
+\useiconst{c:3boxdec}^{-1}\exp\big\{     -\useiconst{c:3boxdec}   \delta\eps^{d-1}L^{\alpha(d-1)}      \big\}.
\end{split}
\end{equation}
An analogous theorem is also valid for decreasing events.
\end{theorem}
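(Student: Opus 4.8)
\emph{Overview.} I would prove Theorem~\ref{thm:3boxdec} by \emph{iterating} the two-box estimate: first peel $B_1$ off the pair $\{B_2,B_3\}$, and then separate $B_2$ from $B_3$ by a direct appeal to Theorem~\ref{thm:2boxdec}. To keep the final sprinkling parameters equal to $\eps$ and $\delta$ I would run the whole argument with $\eps/2$ and $\delta/2$, so that the two sprinklings add up correctly at the price of a worse dimensional constant in the exponent.

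\emph{Geometry.} By translation and rotation invariance of $\mu$ I fix a convenient frame. Cylinders carrying dependence between two far-apart boxes are exactly those whose direction lies in a spherical cap of Euclidean diameter $\asymp\eps/L$ around the direction of the segment joining the box centres; write $D^{12},D^{13}$ for the caps attached to $(B_1,B_2)$ and $(B_1,B_3)$. Hypothesis~\eqref{e:3boxdechyp} is precisely what guarantees, for $L$ large: $(i)$ $D^{12}\cap D^{13}=\varnothing$ --- the lower bound $30\sqrt d\,\eps/L$ keeps $\widehat{x_1-x_2}$ away from $\widehat{x_1-x_3}$, and the upper bound $\sqrt2$, i.e.\ the angle at $x_1$ being at most $\pi/2$, keeps $\widehat{x_1-x_2}$ away from $-\widehat{x_1-x_3}$ at distance $\ge\sqrt2$, so the caps around $\pm\widehat{x_1-x_2}$ and $\pm\widehat{x_1-x_3}$ are pairwise disjoint; $(ii)$ no single cylinder meets all three boxes, since that would force a direction in $D^{12}\cap D^{13}$; $(iii)$ as in the two-box setup, a cylinder meeting $\tilde B_1$ and $\tilde B_2$ must cross the strip $S_1$ near $B_1$, and likewise for $(B_1,B_3)$.

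\emph{Peeling $B_1$ and separating $B_2,B_3$.} Decompose $\omega$ into the lines hitting $\tilde B_1$ with direction outside $D^{12}\cup D^{13}$ (which touch only $\tilde B_1$), the analogous families for $B_2,B_3$, and the problematic families $\eta_{12},\eta_{13},\eta_{23}$ (direction in the corresponding cap and crossing the enlarged strips $S_i'$); by $(i)$, $\eta_{12}\perp\eta_{13}$. Perturb the directions of $\eta_{12}$ within $D^{12}$ fixing the $\Pi_1$-intercept ($\Gamma_1^{(12)}$) and of $\eta_{13}$ within $D^{13}$ ($\Gamma_1^{(13)}$); Lemma~\ref{l:bend}, applied to each family, shows that after thickening the radius by $\eps$ the perturbed cylinders dominate the originals inside $B_1$, while --- acting on different strips --- not interfering with one another. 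Conditioning on the $B_1$-configuration, using the reversibility~\eqref{eq:gammarevers} exactly as in the proofs of Propositions~\ref{p:sprinkle}--\ref{p:sprinkle2}, and running that machinery once on the $(B_1,B_2)$ channel and once on the independent $(B_1,B_3)$ channel, the wiggled images of $\eta_{12}$ and $\eta_{13}$ seen from $B_2$ and from $B_3$ are dominated by two \emph{independent} $\PLP(\tfrac\delta2\mu)$ clouds, up to events of probability $\le\exp\{-c\,\tfrac\delta2(\tfrac\eps2)^{d-1}L^{\alpha(d-1)}\}$. With $f_i\le1$ and monotonicity this gives
\[
\IE\big[f_1 f_2 f_3\big]\ \le\ \IE\big[f_1(\mathcal M_{B_1}^{u,\rho+\eps})\big]\cdot\IE\big[f_2(\mathcal M_{B_2}^{u+\delta/2,\rho+\eps/2})\,f_3(\mathcal M_{B_3}^{u+\delta/2,\rho+\eps/2})\big]\ +\ \text{(two small terms)},
\]
the second factor being a \emph{bona fide} joint cylinder-process expectation. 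Since $|x_2-x_3|\ge\eps^{-1}L^{2+\alpha}$, invariance lets me apply Theorem~\ref{thm:2boxdec} to $(B_2,B_3)$ at intensity $u+\delta/2$, radius $\rho+\eps/2$, with sprinkling $\delta/2,\eps/2$; this splits the second factor into $\IE[f_2(\mathcal M_{B_2}^{u+\delta/2,\rho+\eps})]\,\IE[f_3(\mathcal M_{B_3}^{u+\delta,\rho+\eps})]$ plus one more term of the same exponential order. Bumping every remaining parameter up to $(u+\delta,\rho+\eps)$ by monotonicity and absorbing the $O(1)$ error terms, each $\le\exp\{-c\,2^{-d}\delta\eps^{d-1}L^{\alpha(d-1)}\}$, into a single term with a smaller dimensional constant $\useiconst{c:3boxdec}$ finishes the increasing case; the decreasing case is identical with all sprinklings reversed and the constraints $\delta<u$, $\eps<\rho$.

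\emph{Main obstacle.} The delicate step is peeling $B_1$ off \emph{both} other boxes at once: one must run the perturbation-plus-sprinkling of Propositions~\ref{p:sprinkle}--\ref{p:sprinkle2} on the $B_2$-channel and the $B_3$-channel simultaneously and keep the two $\delta$-sprinkles independent of each other and of everything relevant to $B_2,B_3$. This hinges on $D^{12}\cap D^{13}=\varnothing$, so that $\eta_{12}$ and $\eta_{13}$, and hence their perturbed images landing respectively in $B_2$ and $B_3$, are genuinely independent; checking this, and that the residual $(B_2,B_3)$ joint law is stochastically an honest cylinder process at the sprinkled parameters so that Theorem~\ref{thm:2boxdec} applies verbatim, is where hypothesis~\eqref{e:3boxdechyp} --- including the less obvious upper bound $\sqrt2$ --- gets used in full.
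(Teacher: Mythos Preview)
Your proposal is correct and follows essentially the same approach as the paper: establish that the two problematic channels $\eta^{12}$ and $\eta^{13}$ are independent via the disjointness of their direction-caps (this is the paper's Lemma~\ref{l:unalignedboxes}, and your reading of the role of both bounds in~\eqref{e:3boxdechyp} is right), run the perturbation-plus-sprinkling machinery of Propositions~\ref{p:sprinkle}--\ref{p:sprinkle2} simultaneously on both channels to peel off $B_1$, and then apply Theorem~\ref{thm:2boxdec} to the remaining $(B_2,B_3)$ pair. The paper also uses the halving trick, phrasing it as ``substituting $2\eps$ by $\eps$ and $2\delta$ by $\delta$'' at the end rather than working with $\eps/2,\delta/2$ from the start. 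One small technical wrinkle the paper makes explicit and you only gesture at via ``invariance'': since Theorem~\ref{thm:2boxdec} is stated for axis-aligned boxes in a fixed relative position, applying it to $(B_2,B_3)$ requires first enclosing them in slightly larger boxes aligned with the $x_2$--$x_3$ axis.
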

\begin{remark}
The~$\sqrt{2}$ upper bound in~\eqref{e:3boxdechyp} is not too restricting: in the triangle formed by the points~$x_1,x_2,x_3$ there exists at least one acute angle.
\end{remark}

In order to prove the above result, we will show that conditions~\eqref{e:3boxdechyp0} and~\eqref{e:3boxdechyp} imply the existence of two pairs of boxes, one covering~$B_1$ and~$B_2$, the other covering~$B_1$ and~$B_3$, such that a coupling construction analogous to the one in Proposition~\ref{p:sprinkle} works, and such that the line sets involved in this construction are disjoint, which makes the associated Poisson line processes independent, see Figure~\ref{f:3boxdec}.

From now on we will assume~$x_1,x_2,x_3\in\R^d$ fixed and satisfying~\eqref{e:3boxdechyp0} and~\eqref{e:3boxdechyp}. Define the unit vectors
\begin{equation}
\label{e:v12v13def}
v_{12}:=\frac{x_2-x_1}{|x_2-x_1|},\quad v_{13}:=\frac{x_3-x_1}{|x_3-x_1|}.
\end{equation}
These vectors will play the role which the ``north pole'' ${\bf e}_d$ played in Section~\ref{s:2box}. With that in mind, we fix two rotations~$\mathcal{R}_{12}$ and~$\mathcal{R}_{13}$ which bring~${\bf e}_d$ to~$v_{12}$ and~$v_{13}$ respectively. Letting~$\tau_x$ denote the translation by~$x\in\R^d$ in~$\R^d$, we define the rotated boxes
\begin{equation}
\label{e:B12def}
\begin{split}
B_{12}:=\tau_{x_1} \mathcal{R}_{12} \tau_{x_1}^{-1} (B_\infty(x_1,2\sqrt{d}L)), &\quad
B_{13}:=\tau_{x_1} \mathcal{R}_{13} \tau_{x_1}^{-1} (B_\infty(x_1,2\sqrt{d}L)),
\\
B_{22}:=\tau_{x_2} \mathcal{R}_{12} \tau_{x_2}^{-1} (B_\infty(x_2,2\sqrt{d}L)), &\quad
B_{33}:=\tau_{x_3} \mathcal{R}_{13} \tau_{x_3}^{-1} (B_\infty(x_3,2\sqrt{d}L)),
\end{split}
\end{equation}
and we note that
\begin{equation}
  \label{e:B12contain}
  B_{12}\supset B_1, \quad
  B_{13}\supset B_1, \quad
  B_{22}\supset B_2 \quad \text{and} \quad
  B_{33}\supset B_3.
\end{equation}
\begin{figure}[ht]
	\centering
	\includegraphics[scale = 1]{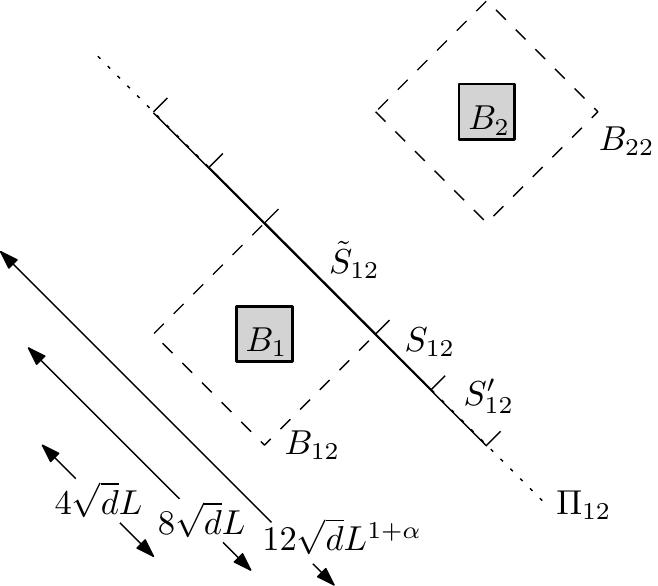}
	\vspace{0.1cm}
	\caption{An illustration (not to scale) showing various sets defined for the proof of Theorem~\ref{thm:3boxdec}.}
	\label{f:s12def}
\end{figure}

Denote by~$\Pi_{12}$ and~$\Pi_{22}$ the hyperplanes orthogonal to~$v_{12}$ containing the respective hyperfaces of~$B_{12}$ and~$B_{22}$ which are closest to each other, denote these faces by~$\tilde S_{12}$ and~$\tilde S_{22}$ respectively. Define $ S_{12}'$ and $S_{22}'$, as~$(d-1)$-dimensional $\ell_\infty$-boxes with radii~$6\sqrt{d}L^{1+\alpha}$ containing respectively  $\tilde S_{12}$ and $\tilde S_{22}$, and having also the same respective barycenters. Denote also by~$ S_{12}$ and $ S_{22}$ the~$(d-1)$-dimensional $\ell_\infty$-boxes with radii~$4\sqrt{d}L$ containing respectively~$\tilde S_{12}$ and $\tilde S_{22}$ and also with same centers of mass. Analogously define~$\Pi_{13}$ and~$\Pi_{33}$, $\tilde S_{13}$ and~$\tilde S_{33}$, $ S_{13}'$ and $S_{33}'$, and $ S_{13}$ and $ S_{33}$. For~$\lambda\in\{12,13,22,33\}$, we have
\begin{equation}
\label{e:Slambdacontain}
\begin{split}
\tilde S_\lambda \subset S_\lambda \subset  S_\lambda' \subset  \Pi_\lambda .
\end{split}
\end{equation}
We refer to Figure~\ref{f:s12def} for clarification. We also define the rotated spherical caps
\begin{equation}
\label{eq:D12def}
\begin{split}
\mathbb{D}_{22}=\mathbb{D}_{12}:=\mathcal{R}_{12}(\mathbb{D}),\quad
\mathbb{D}_{33}=\mathbb{D}_{13}:=\mathcal{R}_{13}(\mathbb{D}).
\end{split}
\end{equation}
Since the radii of the boxes considered was increased, the size of the ``north pole neighborhoods'' must be decreased so that a result analogous to Lemma~\ref{l:bend} may hold. With that in mind, we define
\begin{equation}
\label{e:depstildeldef}
\tilde{D}_{\eps,L}:=\left\{ x \in \mathbb{D}; \dist(x,{\bf e}_d)< \eps(16\sqrt{d}L)^{-1}      \right\},
\end{equation}
as well as
\begin{equation}
\label{eq:D12tildef}
\begin{split}
D_{\eps,L}^{12}=D_{\eps,L}^{12}:=\mathcal{R}_{12}(\tilde{D}_{\eps,L}),\quad
D_{\eps,L}^{13}=D_{\eps,L}^{13}:=\mathcal{R}_{13}(\tilde{D}_{\eps,L}).
\end{split}
\end{equation}

\begin{figure}[ht]
	\centering
	\includegraphics[scale = 1]{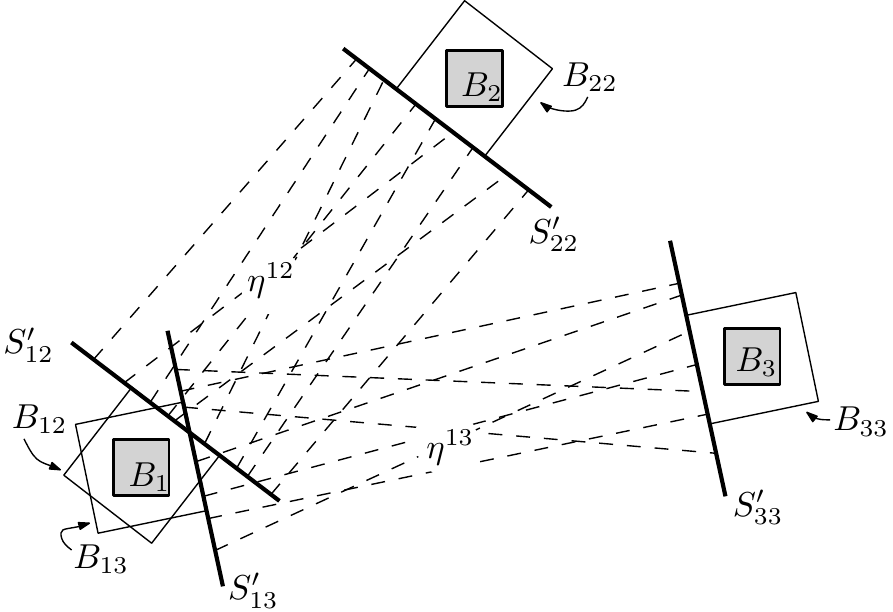}
	\vspace{0.1cm}
	\caption{A schematic showing the sets involved in the proof of Theorem~\ref{thm:3boxdec}.}
	\label{f:3boxdec}
\end{figure}

For~$\lambda=12,13,22,33$, we now characterize (except in a zero $\mu$-measure set) a line~$l\in\LL$ by~$p_\lambda(l)$, its intersection point with~$\Pi_\lambda$, and its direction~$d_\lambda (l)\in\mathbb{D}_\lambda$. Again, a result in the manner of Lemma~\ref{l:danielplpsampling} holds, where we can sample~$\PLP(u\mu)$ starting with a Poisson point process in the above planes instead of in~$\R^{d-1}\times\{0\}$.

We define the ``harmless'' Poisson line processes
\begin{equation}
\begin{split}
\eta_1^0 &=
\sum_{i \geq 0,u_i\leq u} \delta_{(l_i,u_i)}{\bf 1}\{
l_i\cap S_1\neq \varnothing;
d_{12}(l_i)\notin D_{\eps,L}^{12}  ; d_{13}(l_i)\notin D_{\eps,L}^{13}   \},
\\
\eta_2^0 &=
\sum_{i \geq 0,u_i\leq u} \delta_{(l_i,u_i)}{\bf 1}\{
l_i\cap S_2\neq \varnothing;
d_{12}(l_i)\notin D_{\eps,L}^{12}  \}
,
\\
\eta_3^0 &=
\sum_{i \geq 0,u_i\leq u} \delta_{(l_i,u_i)}{\bf 1}\{
l_i\cap S_3\neq \varnothing;
d_{13}(l_i)\notin D_{\eps,L}^{13}  \}
,
\end{split}
\end{equation}
as well as the processes which can ``carry information'' between the pairs of boxes
\begin{equation}
\begin{split}
\eta^{12}& =\sum_{i \geq 0,u_i\leq u} \delta_{(l_i,u_i)}{\bf 1} \{  d_{12}(l_i) \in D_{\eps,L}^{12} \text{ and either }      p_{12}(l)\in S_{12}' \text{ or }    p_2(l)\in  S_2   '  \},
\\
\eta^{13}& =\sum_{i \geq 0,u_i\leq u} \delta_{(l_i,u_i)}{\bf 1} \{  d_{13}(l_i) \in D_{\eps,L}^{13} \text{ and either }      p_{13}(l)\in S_{13}' \text{ or }    p_3(l)\in  S_3  '  \},
\end{split}
\end{equation}
see Figure~\ref{f:3boxdec} for an illustration depicting the two last point measures. What is crucial for the proof of Theorem~\ref{thm:3boxdec} is the already advertised fact that~$\eta^{13}$ and~$\eta^{12}$ are independent line processes, and therefore a coupling construction like the one of Proposition~\ref{p:sprinkle} can be done simultaneously for the two processes. The next lemma rigorously states this result.

\begin{lemma}
\label{l:unalignedboxes}
Using the notation above defined, we have that, for sufficiently large~$L$, $\eta^{13}$ and~$\eta^{12}$ are independent Poisson line processes.
\end{lemma}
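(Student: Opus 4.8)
The plan is to show that the two Poisson line processes $\eta^{12}$ and $\eta^{13}$ are obtained by restricting the original process $\omega$ to \emph{disjoint} subsets of $\LL\times\R_+$; since a Poisson point process restricted to disjoint measurable sets yields independent processes, this immediately gives the claim. Concretely, I would let $A_{12}\subset\LL$ be the set of lines $l$ with $d_{12}(l)\in D^{12}_{\eps,L}$ and ($p_{12}(l)\in S_{12}'$ or $p_2(l)\in S_2'$), and similarly $A_{13}$ with the index $12$ replaced by $13$ and box $2$ by box $3$. Then $\eta^{12}$ is the restriction of $\omega$ to $A_{12}\times[0,u]$ and $\eta^{13}$ to $A_{13}\times[0,u]$, so it suffices to prove $A_{12}\cap A_{13}=\varnothing$ for $L$ large.

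Suppose for contradiction that some line $l$ lies in $A_{12}\cap A_{13}$. Then $d_{12}(l)\in D^{12}_{\eps,L}=\mathcal R_{12}(\tilde D_{\eps,L})$ and $d_{13}(l)\in D^{13}_{\eps,L}=\mathcal R_{13}(\tilde D_{\eps,L})$, i.e. the (un-oriented) direction of $l$ is within Euclidean distance $\eps/(16\sqrt d\,L)$ of both $v_{12}=\mathcal R_{12}({\bf e}_d)$ and $v_{13}=\mathcal R_{13}({\bf e}_d)$ (up to sign). By the triangle inequality, $\operatorname{dist}(v_{12},v_{13})\leq 2\eps/(16\sqrt d\,L)=\eps/(8\sqrt d\,L)$ — or, if the relevant directions point oppositely, $v_{12}$ is that close to $-v_{13}$. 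The first case directly contradicts the lower bound $\operatorname{dist}(v_{12},v_{13})\geq 30\sqrt d\,\eps/L$ from hypothesis~\eqref{e:3boxdechyp} (here I would use that $v_{12}=(x_2-x_1)/|x_2-x_1|$ equals $-(x_1-x_2)/|x_1-x_2|$, matching the normalization in \eqref{e:3boxdechyp}, up to the trivial sign flip that \eqref{e:3boxdechyp} is insensitive to). The antipodal case is ruled out by the upper bound $\operatorname{dist}(v_{12},v_{13})\leq\sqrt2$ in \eqref{e:3boxdechyp}: if $v_{12}$ were within $\eps/(8\sqrt d\,L)<\sqrt2$ of $-v_{13}$ it would be at distance at least $2-\eps/(8\sqrt d\,L)>\sqrt2$ from $v_{13}$ itself, a contradiction. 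Hence no such $l$ exists.

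There is a mild subtlety I would address carefully: the point-of-intersection conditions ($p_{12}(l)\in S_{12}'$ vs. $p_2(l)\in S_2'$, etc.) are genuinely different constraints, but they play no role in the disjointness argument — the directional constraints alone already force $A_{12}\cap A_{13}=\varnothing$, so I would simply note that $A_{12}\subseteq\{l:d_{12}(l)\in D^{12}_{\eps,L}\}$ and likewise for $A_{13}$, and run the contradiction on these larger sets. I expect the only real point requiring care is the bookkeeping of which unit vector is which and the fact that the spherical-cap condition ``$d_\lambda(l)\in\mathbb D_\lambda$'' already fixes an orientation (the ``northern hemisphere'' convention from Section~\ref{s:preliminaries} transported by $\mathcal R_\lambda$), which is why one must separately dispose of the antipodal scenario using the $\sqrt2$ bound rather than assuming it away; this is the step I would consider the main, though modest, obstacle. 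Everything else is the standard independence property of Poisson processes on disjoint sets, which I would invoke from the construction of $(\Omega,\mathcal E,\IP)$ together with Lemma~\ref{l:danielplpsampling}.
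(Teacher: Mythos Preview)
Your argument is correct and in fact cleaner than the paper's. Both proofs reduce to showing that the index sets $A_{12},A_{13}\subset\LL$ underlying $\eta^{12},\eta^{13}$ are disjoint, but the routes diverge. The paper fixes a line through $y_1\in S_{13}'$ and $y_3\in S_{33}'$, uses the separation hypothesis~\eqref{e:3boxdechyp0} to show that $(y_3-y_1)/|y_3-y_1|$ lies within $30\sqrt d\,\eps/L$ of $v_{13}$, and then invokes the lower bound in~\eqref{e:3boxdechyp} to exclude it from $D^{12}_{\eps,L}$; in other words, it recovers the directional information from the spatial constraints. You instead discard the spatial constraints and work directly with the caps $D^{12}_{\eps,L}$ and $D^{13}_{\eps,L}$: a common line would force its direction to lie within $\eps/(16\sqrt d\,L)$ of both $v_{12}$ and $\pm v_{13}$, and you dispatch the two sign cases using respectively the lower and upper bounds in~\eqref{e:3boxdechyp}. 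Your route is shorter, does not touch~\eqref{e:3boxdechyp0}, and has the virtue of making explicit why the $\sqrt 2$ upper bound is present in the hypotheses (to kill the antipodal scenario), a point the paper's write-up leaves implicit. The paper's approach, on the other hand, is closer to the geometric picture driving the whole section---namely that lines joining the $S'$ squares are nearly parallel to the corresponding $v_{1i}$---which is why it reuses the estimate~\eqref{e:3boxdechyp0} already in play.
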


\begin{proof}
We will show that lines intersecting both~$S_{13}'$ and~$S_{33}'$ cannot have its direction in~$D_{\eps,L}^{12}$, which will show the result by elementary properties of the Poisson process. With that in mind, consider~$y_3\in S_{33}'$ and~$y_1\in S_{13}'$. By the Pythagorean Theorem, we have that, for large enough~$L$, there exist vectors~$w_1,w_3\in\R^d$ such that
\begin{equation}
\label{eq:unnalignedboxes1}
\begin{split}
y_1=x_1+w_1,\, y_3=x_3+w_3,\text{ and }|w_1|,|w_3| \leq 7\sqrt{d} L^{1+\alpha}.
\end{split}
\end{equation}
We then have
\begin{equation}
\label{eq:unnalignedboxes2}
\begin{split}
\frac{y_3-y_1}{|y_3-y_1|}
&=
\left(\frac{x_3-x_1}{|x_3-x_1|}+\frac{w_3-w_1}{|x_3-x_1|}\right)
\left(\frac{|x_3-x_1 +w_3-w_1|}{|x_3-x_1|}\right)^{-1},
\end{split}
\end{equation}
and for large enough~$L$, by the triangle inequality,
\begin{equation}
  \label{eq:unnalignedboxes3}
  \begin{split}
    \Big| \frac{x_3-x_1}{|x_3-x_1|} & \left(\frac{|x_3-x_1 +w_3-w_1|}{|x_3-x_1|}\right)^{-1}-\frac{x_3-x_1}{|x_3-x_1|} \Big|
    =
    \Big| 1-\left(\frac{|x_3-x_1 +w_3-w_1|}{|x_3-x_1|}\right)^{-1} \Big|
    \\
    &\leq
    15\sqrt{d} \frac{\eps}{L}.
  \end{split}
\end{equation}
Using again the triangular inequality and Equation~\eqref{eq:unnalignedboxes2}, we obtain, for large enough~$L$,
\begin{equation}
\label{eq:unnalignedboxes4}
\begin{split}
\left|\frac{y_3-y_1}{|y_3-y_1|}-\frac{x_3-x_1}{|x_3-x_1|}\right|
&\leq
15\sqrt{d} \frac{\eps}{L}+14\sqrt{d} \frac{\eps}{L}
\left(\frac{|x_3-x_1 +w_3-w_1|}{|x_3-x_1|}\right)^{-1}
< 30\sqrt{d} \frac{\eps}{L}.
\end{split}
\end{equation}
Now, the definition of~$D_{\eps,L}$ in~\eqref{e:depsldef}, the definition of~$D_{\eps,L}^{12}$ in~\eqref{eq:D12def}, the hypothesis~\eqref{e:3boxdechyp}, and the triangular inequality show that
\begin{equation}
\label{eq:unnalignedboxes5}
\begin{split}
\frac{y_3-y_1}{|y_3-y_1|}\notin
D_{\eps,L}^{12},
\end{split}
\end{equation}
finishing the proof of the result.
\end{proof}

Let~$\tilde \chi_{\eps,L}$ denote the measure~$\chi_{\eps,L}$ conditioned on selecting a direction in~$\tilde D_{\eps,L}$, and let~$\bar \chi_{\eps,L}^{12}$ and~$\bar \chi_{\eps,L}^{13}$ denote respectively the pushforward of the measure~$\tilde \chi_{\eps,L}$ by the rotations~$\mathcal{R}_{12}$ and~$\mathcal{R}_{13}$. For $\lambda=12,22$, we define direction re-sampling operations in the same manner of~\eqref{eq:gammadef}:
\begin{equation}
\label{eq:gamma12def}
\begin{array}{cclc}
\Gamma_{\lambda}^{12}: &\eta^{12} & \to& \Gamma_{\lambda}^{12}(\eta^{12})\\
&(p_\lambda(l),d_\lambda(l)) & \mapsto& (p_\lambda(l), d_\lambda'(l)) ,
\end{array}
\end{equation}
where~$ d_\lambda'(l)$ is defined to be either a random vector in~$D_{\eps,L}^{12}$ sampled according to~$\bar \chi_{\eps,L}^{12}$ independently for each~$l\in\eta^{12}$ if~$p_\lambda(l)\in S_{\lambda}'$, or simply equal to~$d_\lambda(l)$ otherwise. We analogously define~$\Gamma_{13}^{13}$ and~$\Gamma_{33}^{13}$.

In the manner of~(\ref{e:etas1}) and~(\ref{e:etas1g}) we define, for~$i=2,3$,
\begin{equation}
\label{e:etas1i}
\begin{split}
\eta^{1i}_{S_{1i}}&:=\sum_{(l_j,u_j)\in\eta^{1i}} \delta_{(l_j,u_j)}{\bf 1} \{  p_{1i}(l_j)\in S_{1i} ;    d_{1i}(l_j)\in D_{\eps,L}      \};
\\
\eta^{1i}_{S_{1i}'\setminus S_{1i}}&:=\eta^{1i}-\eta^{1i}_{S_{1i}},
\end{split}
\end{equation}
as well as
\begin{equation}
\label{e:etas1ig}
\begin{split}
\Gamma^{1i}_{1i}(\eta^{1i}_{S_{1i}})&:=\sum_{(l_j,u_j)\in\Gamma^{1i}_{1i}(\eta^{1i})} \delta_{(l_j,u_j)}{\bf 1} \{  p_{1i}(l_j)\in S_{1i} ;    d_{1i}'(l_j)\in D_{\eps,L}      \};
\\
\Gamma^{1i}_{1i}(\eta^{1i}_{S_{1i}'\setminus S_{1i}})&:=\Gamma^{1i}_{1i}(\eta^{1i})-\Gamma^{1i}_{1i}(\eta^{1i}_{S_{1i}}).
\end{split}
\end{equation}
As in \eqref{eq:gammarevers}, we have, by definition, detailed balance equations for these stochasic operations:
\begin{equation}
  \label{eq:gammalambdarevers}
  \begin{split}
  \big( \eta^{12}, \Gamma_{\lambda}^{12}( \eta^{12} ) \big)
    &\eqd
      \big( \Gamma_{\lambda}^{12}( \eta^{12} ), \eta^{12} \big), \qquad
  \big( \eta^{12}, \Gamma_{\lambda}^{22}( \eta^{12} ) \big)
    \eqd
      \big( \Gamma_{\lambda}^{22}( \eta^{12} ), \eta^{12} \big),
  \\
  \big( \eta^{13}, \Gamma_{\lambda}^{13}( \eta^{13} ) \big)
    &\eqd
      \big( \Gamma_{\lambda}^{13}( \eta^{13} ), \eta^{13} \big), \qquad
  \big( \eta^{13}, \Gamma_{\lambda}^{33}( \eta^{13} ) \big)
    \eqd
      \big( \Gamma_{\lambda}^{33}( \eta^{13} ), \eta^{13} \big).
  \end{split}
\end{equation}

The following lemmas are analogous to lemmas~\ref{l:bend}, \ref{l:s1s2intens}, and~\ref{l:theta}, and they are proved in the same way.

\begin{lemma}
  \label{l:bend3box}
  With the notation above developed we have, for large~$L$,
  \begin{equation}
    \nonumber
    \label{eq:lemmabend3box}
    \begin{split}
&\mathcal{M}_{B_{12}}^{u,\rho-\eps}\big(\Gamma^{12}_{12}(\eta^{12})+\eta_1^0\big)\preceq \mathcal{M}_{B_{12}}^{u,\rho}\big(\eta^{12}+\eta_1^0\big)\preceq \mathcal{M}_{B_{12}}^{u,\rho+\eps}\big(\Gamma^{12}_{12}(\eta^{12})+\eta_1^0\big),
      \\
&\mathcal{M}_{B_{13}}^{u,\rho-\eps}\big(\Gamma^{13}_{13}(\eta^{13})+\eta_1^0\big)\preceq \mathcal{M}_{B_{13}}^{u,\rho}\big(\eta^{13}+\eta_1^0\big)\preceq \mathcal{M}_{B_{13}}^{u,\rho+\eps}\big(\Gamma^{13}_{13}(\eta^{13})+\eta_1^0\big),
      \\
&\mathcal{M}_{B_{22}}^{u,\rho-\eps}\big(\Gamma^{12}_{22}(\eta^{12})+\eta_2^0\big)\preceq \mathcal{M}_{B_{22}}^{u,\rho}\big(\eta^{12}+\eta_2^0\big) \preceq \mathcal{M}_{B_{22}}^{u,\rho+\eps}\big(\Gamma^{12}_{22}(\eta^{12})+\eta_2^0\big),
      \\
&\mathcal{M}_{B_{33}}^{u,\rho-\eps}\big(\Gamma^{13}_{33}(\eta^{13})+\eta_3^0\big)\preceq \mathcal{M}_{B_{33}}^{u,\rho}\big(\eta^{13}+\eta_3^0\big) \preceq \mathcal{M}_{B_{33}}^{u,\rho+\eps}\big(\Gamma^{13}_{33}(\eta^{13})+\eta_3^0\big).
    \end{split}
  \end{equation}
\end{lemma}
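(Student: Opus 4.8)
The plan is to prove Lemma~\ref{l:bend3box} by reducing each of the four displayed domination inequalities to exactly the geometric estimate that was already carried out in the proof of Lemma~\ref{l:bend}, after translating and rotating the configuration so that the relevant ``north pole direction'' becomes ${\bf e}_d$. Since all four inequalities are completely parallel, I would treat only the first one, $\mathcal{M}_{B_{12}}^{u,\rho-\eps}\big(\Gamma^{12}_{12}(\eta^{12})+\eta_1^0\big)\preceq \mathcal{M}_{B_{12}}^{u,\rho}\big(\eta^{12}+\eta_1^0\big)\preceq \mathcal{M}_{B_{12}}^{u,\rho+\eps}\big(\Gamma^{12}_{12}(\eta^{12})+\eta_1^0\big)$, in detail, and remark that the remaining three follow by the identical argument with $12\mapsto 13,22,33$ and $B_1\mapsto B_1,B_2,B_3$ as appropriate.

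The first step is to reduce to the lines that actually matter. The map $\Gamma^{12}_{12}$ modifies only the lines of $\eta^{12}$ whose intersection point with $\Pi_{12}$ lies in $S_{12}'$; all other lines, as well as all lines of $\eta_1^0$, are left untouched, so they contribute identically to both sides and can be ignored. For a modified line $l$ we have $d_{12}(l)\in D_{\eps,L}^{12}=\mathcal{R}_{12}(\tilde D_{\eps,L})$, and the point of the lemma is that $\Gamma^{12}_{12}$ only re-samples the direction within $D_{\eps,L}^{12}$ while keeping $p_{12}(l)$ fixed. Applying the isometry $\tau_{x_1}\mathcal{R}_{12}\tau_{x_1}^{-1}$, which by \eqref{e:B12def} maps $B_\infty(x_1,2\sqrt d L)$ to $B_{12}$ and maps the hyperplane orthogonal to ${\bf e}_d$ through the appropriate face to $\Pi_{12}$, this is reduced to the statement: if $l'$ and $l''$ are two lines through a common point $p$ of the (upper) face of an $\ell_\infty$-box of radius $2\sqrt d L$, with directions $v',v''\in\tilde D_{\eps,L}$, then on the slab between the two horizontal faces of that box the two lines stay within Euclidean distance $\eps/2$ of each other — which, once translated, gives that $B(l',\rho)\cap B_{12}\subseteq B(l'',\rho+\eps)$ and $B(l'',\rho-\eps)\cap B_{12}\subseteq B(l',\rho)$.

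The core computation is then exactly the one in \eqref{e:wiggle}--\eqref{e:item3theta}, but with the constants adjusted to the enlarged box: the vertical extent is now $4\sqrt d L$ instead of $2L$, which is why $\tilde D_{\eps,L}$ in \eqref{e:depstildeldef} was defined with the smaller diameter $\eps(16\sqrt d L)^{-1}$ rather than $\eps(8L)^{-1}$. Concretely, for $v',v''\in\tilde D_{\eps,L}$ one has $|v'-v''|\le \eps(8\sqrt d L)^{-1}$ and $1-\eps^2/(128 d L^2)\le v'_d,v''_d\le 1$ by the law of cosines; feeding these into the same chain of inequalities as in \eqref{e:item3theta}, with $|z_0-L|$ replaced by the at-most-$4\sqrt d L$ vertical displacement inside $B_{12}$, gives that the horizontal separation of the two lines throughout $B_{12}$ is at most $(3/4)\eps<\eps$ for $L$ large (depending only on $d$). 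This is the ``$\preceq$ in the radius direction is dominated by the $\Gamma$-wiggle'' statement, and it yields both of the $\preceq$'s in the first line of the lemma: the middle set is covered by the $(\rho+\eps)$-version of the perturbed configuration, and the $(\rho-\eps)$-version of the perturbed configuration is covered by the unperturbed $\rho$-configuration; the count components of the $\mathcal{M}$'s agree trivially since $\Gamma^{12}_{12}$ is a bijection on lines. Finally one records that $\Gamma^{12}_{22}$, $\Gamma^{13}_{13}$, $\Gamma^{13}_{33}$ are defined with respect to $\Pi_{22},\Pi_{13},\Pi_{33}$ and the rotations $\mathcal{R}_{12},\mathcal{R}_{13}$ in precisely the same way, so the other three lines of \eqref{eq:lemmabend3box} follow verbatim.

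The only genuinely delicate point is bookkeeping rather than mathematics: one must make sure the diameter chosen for $\tilde D_{\eps,L}$ together with the radius $2\sqrt d L$ of the enlarged boxes still makes the final estimate come out below $\eps$ with room to spare, and that the ``for sufficiently large $L$'' is uniform in $\eps,\rho\in[1,4]$ — exactly as in Lemma~\ref{l:bend}, the relevant smallness is that of $\eps/L$, so this is automatic. I therefore do not expect any real obstacle; the lemma is a routine transcription of Lemma~\ref{l:bend} to four rotated copies, and the proof can legitimately be abbreviated to ``same as the proof of Lemma~\ref{l:bend}, after applying the isometries $\tau_{x_i}\mathcal{R}_{1j}\tau_{x_i}^{-1}$ and using that $\tilde D_{\eps,L}$ was shrunk to compensate for the larger boxes.''
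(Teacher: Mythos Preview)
Your proposal is correct and matches the paper's approach exactly: the paper does not give a separate proof of Lemma~\ref{l:bend3box} but simply states that it is ``analogous to [Lemma~\ref{l:bend}] and proved in the same way.'' You have written out in full what ``the same way'' means---apply the rigid motion $\tau_{x_i}\mathcal{R}_{1j}\tau_{x_i}^{-1}$ to reduce to the axis-aligned picture, then rerun the estimate \eqref{e:item3theta} with box radius $2\sqrt d\,L$ and cap radius $\eps(16\sqrt d\,L)^{-1}$, noting that these were chosen precisely so that the leading term $4\sqrt d\,L\cdot \eps/(8\sqrt d\,L)=\eps/2$ matches the original---and your bookkeeping is correct.
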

\newiconst{c:s1s2intens3box}

\begin{lemma}
  \label{l:s1s2intens3box}
  Denote by~$\tilde\mu^{12}$ and~$\tilde\mu^{13}$ the pushforward of the measure~$\tilde \mu$ by the rotations~$\mathcal{R}_{12}$ and~$\mathcal{R}_{13}$ respectively. There exists a constant~$\useiconst{c:s1s2intens3box}>0$ such that, for~$i=2,3$,
  \begin{equation}
    \label{e:s1s2intens3box}
    \tilde\mu^{1i}\left(  S_{1i} \times D_{\eps,L}^{1i}\right) = \useiconst{c:s1s2intens3box} \eps^{d-1}\Big(1-\frac{\eps^2}{256L^2}\Big)^{\frac{d-1}{2}}.
  \end{equation}
\end{lemma}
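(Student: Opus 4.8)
The plan is to reproduce the proof of Lemma~\ref{l:s1s2intens} almost verbatim, the only extra ingredient being the rotation invariance of the underlying measure. Recall from~\eqref{e:tildemu} that~$\tilde\mu$ is a constant multiple of the Haar measure~$\mu$ on~$\LL$, which is invariant under every rotation and translation of~$\R^d$. I would use this exactly as in the remark after Lemma~\ref{l:danielplpsampling}, where one notes that~$\PLP(u\mu)$ can be sampled starting from a Poisson point process on a tilted hyperplane instead of on~$\R^{d-1}\times\{0\}$: parametrizing a line~$l\in\LL$ (outside a~$\mu$-null set) by its intersection point~$p_{1i}(l)$ with~$\Pi_{1i}$ and its direction~$d_{1i}(l)\in\mathbb{D}_{1i}=\mathcal{R}_{1i}(\mathbb{D})$, the image of~$\tilde\mu$ under this parametrization---which is the measure~$\tilde\mu^{1i}$ of the statement---factorizes as~$\d v_{d-1}\otimes\chi^{1i}$, where~$\chi^{1i}$ is the pushforward of~$\chi$ by~$\mathcal{R}_{1i}$, that is,~$\chi^{1i}(A)=\useiconst{c:phi}\int_A\langle w,v_{1i}\rangle\,\sigma(\d w)$ for measurable~$A\subseteq\mathbb{D}_{1i}$. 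This product form is exactly what Proposition~2.2.1 of~\cite{teseDaniel} (already invoked for~\eqref{e:tildemu}) gives once combined with the invariances above.

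With the product form in hand the rest is a short computation. Since~$S_{1i}$ is a~$(d-1)$-dimensional~$\ell_\infty$-box of radius~$4\sqrt{d}L$ inside~$\Pi_{1i}$, one has~$v_{d-1}(S_{1i})=(8\sqrt{d}L)^{d-1}$, while the change of variables formula and the definition~\eqref{eq:D12tildef} of~$D_{\eps,L}^{1i}=\mathcal{R}_{1i}(\tilde D_{\eps,L})$ give~$\chi^{1i}(D_{\eps,L}^{1i})=\chi(\tilde D_{\eps,L})$, so that
\[
  \tilde\mu^{1i}\big(S_{1i}\times D_{\eps,L}^{1i}\big)=v_{d-1}(S_{1i})\,\chi^{1i}\big(D_{\eps,L}^{1i}\big)=(8\sqrt{d}L)^{d-1}\,\chi(\tilde D_{\eps,L}),
\]
and everything reduces to estimating~$\chi(\tilde D_{\eps,L})$.

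For that I would copy the spherical-coordinates argument from the proof of Lemma~\ref{l:s1s2intens}: the law of cosines identifies~$\tilde D_{\eps,L}$ with the unit vectors whose polar angle~$\phi_1$ relative to~${\bf e}_d$ satisfies~$\phi_1\leq\arccos(1-c_d\eps^2/L^2)$ for an explicit dimensional constant~$c_d$; plugging the density from~\eqref{e:defspheremes} into spherical coordinates and using~$\int_0^{\Theta}\cos\phi_1\sin^{d-2}\phi_1\,\d\phi_1=\frac{1}{d-1}\sin^{d-1}\Theta$ gives~$\chi(\tilde D_{\eps,L})=c\,\sin^{d-1}(\arccos(1-c_d\eps^2/L^2))$, and the identity~$\sin(\arccos t)=\sqrt{1-t^2}$ rewrites this as a dimensional constant times~$\eps^{d-1}L^{-(d-1)}$ multiplied by a factor of the form~$(1-O(\eps^2/L^2))^{(d-1)/2}$. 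Substituting back into the display above yields~\eqref{e:s1s2intens3box}, with~$\useiconst{c:s1s2intens3box}$ absorbing all dimensional factors.

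I do not anticipate a genuine obstacle, as the argument is just a rotation-invariance wrapper around an already-established computation. The single point worth a line of care is in the first paragraph: checking that re-parametrizing~$\tilde\mu$ through~$(p_{1i},d_{1i})$ really reproduces the product measure~$\d v_{d-1}\otimes\chi^{1i}$, rather than some sheared version of it. This is where the rotation invariance of~$\mu$ (and of the spherical Lebesgue measure~$\sigma$) is used essentially, and it is the very same mechanism that justified sampling~$\PLP(u\mu)$ from the tilted planes~$\Pi_\lambda$ in the preceding pages.
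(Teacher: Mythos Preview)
Your proposal is correct and matches the paper's approach exactly: the paper simply states that this lemma is proved in the same way as Lemma~\ref{l:s1s2intens}, and you have carried out precisely that computation, with the rotation-invariance reduction making explicit why the tilted parametrization still factorizes as~$\d v_{d-1}\otimes\chi^{1i}$. One cosmetic remark: with the smaller cap~$\tilde D_{\eps,L}$ of radius~$\eps/(16\sqrt d L)$ the law-of-cosines step actually produces the correction factor~$(1-\eps^2/(1024dL^2))^{(d-1)/2}$ rather than the~$256$ appearing in the displayed formula, but this discrepancy is inconsequential for the applications and appears to be a cosmetic slip in the paper's own statement.
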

\newiconst{c:intersecdens3box}
\begin{lemma}
  \label{l:theta3box}
  For~$i=2,3$, consider~$l\in \eta^{1i}_{S_{1i}}$. Denote by~$\Gamma_{ii}^{1i}\circ\Gamma_{1i}^{1i}(l)$ the line in~$\Gamma_{ii}^{1i}(\eta^{1i})$ corresponding to~$\Gamma_{1i}^{1i}(l)$ in~$\eta_{S_{1i}}^{1i}$. There exists a constant~$\useiconst{c:intersecdens3box}>0$ such that for every~$p\in \Pi_{1i}$,~$d\in D_{\eps,L}^{1i}$ and sufficiently large~$L$,
  \begin{equation}
    \nonumber
    \label{e:intersecdens3box}
    \begin{split}
      \lefteqn{\IP \left(  p_{ii}(\Gamma_{ii}^{1i}\circ \Gamma_{1i}^{1i} (l)) \in A   , d_{ii}(\Gamma_{ii}^{1i}\circ \Gamma_{1i}^{1i}(l)) \in B \middle \vert p_{1i}(l)=p,d_{1i}'(l)=d \right)}\phantom{****************}\phantom{************}
      \\
      &\leq \useiconst{c:intersecdens3box}L^{-(1+\alpha)(d-1)}  \int {\bf 1}_A \d v_{d-1}\cdot  \bar\chi_{\eps,L}^{1i}(B),
    \end{split}
  \end{equation}
  for every Borelian subsets~$A\subseteq \Pi_{ii}$, $B\subseteq D_{\eps,L}^{1i}$.
\end{lemma}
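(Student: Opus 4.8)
\emph{Proof proposal for Lemma~\ref{l:theta3box}.} The plan is to transcribe the proof of Lemma~\ref{l:theta} with the obvious dictionary: the distinguished direction~${\bf e}_d$ becomes~$v_{1i}$, the hyperplanes~$\Pi_1,\Pi_2$ become~$\Pi_{1i},\Pi_{ii}$, the squares~$S_1,S_2'$ become~$S_{1i},S_{ii}'$, the cap~$D_{\eps,L}$ becomes~$D_{\eps,L}^{1i}$, the measure~$\bar\chi_{\eps,L}$ becomes~$\bar\chi_{\eps,L}^{1i}$, and the operators~$\Gamma_1,\Gamma_2$ become~$\Gamma_{1i}^{1i},\Gamma_{ii}^{1i}$. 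This is legitimate because~$\mu$ is rotation invariant, so (as remarked after~\eqref{eq:D12tildef}) the sampling scheme of Lemma~\ref{l:danielplpsampling} may be run starting from a Poisson process on~$\Pi_{1i}$ or on~$\Pi_{ii}$; equivalently one may apply the rotation~$\tau_{x_1}\mathcal{R}_{1i}^{-1}\tau_{x_1}^{-1}$ around~$x_1$, which maps the~$(1i)$-parametrization onto an axis-aligned $2$-box configuration with box radius~$2\sqrt d L$ and cap~$D_{\eps,2\sqrt d L}$, the plane separation becoming a constant multiple of~$\eps^{-1}(2\sqrt d L)^{2+\alpha}$. The two cases~$i=2$ and~$i=3$ are handled identically.

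With this reduction, one replays the three ingredients of the proof of Lemma~\ref{l:theta}. First, the projection~$\pi_{p,\mathbb{D}_{1i}}\colon \Pi_{ii}\to\mathbb{D}_{1i}$, $x\mapsto (x-p)/|x-p|$, is a bijection; the only quantitative point to re-check is the separation of the two hyperplanes, namely that since~$B_{1i},B_{ii}$ have~$\ell_\infty$-radius~$2\sqrt d L$ and~$|x_1-x_i|\geq\eps^{-1}L^{2+\alpha}$ by~\eqref{e:3boxdechyp0}, one has~$\dist(\Pi_{1i},\Pi_{ii})\geq\eps^{-1}L^{2+\alpha}-4\sqrt d L\geq\tfrac12\eps^{-1}L^{2+\alpha}$ for large~$L$, whence~$|x-p|\geq\tfrac12\eps^{-1}L^{2+\alpha}$ for every~$x\in\Pi_{ii}$. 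Exactly as in~\eqref{e:dpid}--\eqref{e:dpid2}, every entry of~$\d\pi_{p,\mathbb{D}_{1i}}$ is then~$O(\eps L^{-(2+\alpha)})$ and~$|\det\d\pi_{p,\mathbb{D}_{1i}}|\leq c\,\eps^{d-1}L^{-(d-1)(2+\alpha)}$. Conditionally on~$p_{1i}(l)=p$, the operation~$\Gamma_{1i}^{1i}$ makes the resampled direction~$\bar\chi_{\eps,L}^{1i}$-distributed and independent of~$d_{1i}(l)$, with~$\pi_{p,\mathbb{D}_{1i}}\big(p_{ii}(\Gamma_{1i}^{1i}(l))\big)\eqd\bar\chi_{\eps,L}^{1i}$; since the normalizing constant of~$\bar\chi_{\eps,L}^{1i}$ relative to~$\sigma$ on~$D_{\eps,L}^{1i}$ is of order~$(L/\eps)^{d-1}$ by the computation~\eqref{e:chidepsl} (equivalently Lemma~\ref{l:s1s2intens3box}), the change-of-variables formula carried out as in~\eqref{e:pid2} yields
\[
  \IP\big(\, p_{ii}\big(\Gamma_{1i}^{1i}(l)\big)\in A \,\big\vert\, p_{1i}(l)=p \,\big) \;\leq\; c\, L^{-(1+\alpha)(d-1)} \int {\bf 1}_A \,\d v_{d-1}.
\]
Second, since~$\Gamma_{ii}^{1i}$ fixes the intersection point with~$\Pi_{ii}$, we have~$p_{ii}(\Gamma_{ii}^{1i}\circ\Gamma_{1i}^{1i}(l))=p_{ii}(\Gamma_{1i}^{1i}(l))$, so the displayed bound is also the spatial marginal of~$\Gamma_{ii}^{1i}\circ\Gamma_{1i}^{1i}(l)$. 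Third, a line started from~$S_{1i}$ (radius~$4\sqrt d L$) with direction in~$D_{\eps,L}^{1i}$ deviates from~$v_{1i}$ by at most~$\eps(16\sqrt d L)^{-1}$, so over the inter-plane distance~$\leq\eps^{-1}L^{2+\alpha}$ its transverse displacement is~$\leq L^{1+\alpha}(16\sqrt d)^{-1}$; hence its landing point lies in~$S_{ii}'$ (radius~$6\sqrt d L^{1+\alpha}$) once~$L$ is large, so~$\Gamma_{ii}^{1i}$ resamples its direction from~$\bar\chi_{\eps,L}^{1i}$ independently of the landing point, contributing the factor~$\bar\chi_{\eps,L}^{1i}(B)$ and completing the estimate.

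As the paper already signals, there is no essential obstacle: the argument is a transcription of that of Lemma~\ref{l:theta}. The step I would expect to demand the most care is the purely geometric bookkeeping — verifying that the enlarged parameters (radius~$2\sqrt d L$ for~$B_\lambda$, radii~$4\sqrt d L$ and~$6\sqrt d L^{1+\alpha}$ for~$S_\lambda$ and~$S_\lambda'$, and the shrunken cap diameter~$\eps(16\sqrt d L)^{-1}$ from~\eqref{e:depstildeldef}) are chosen mutually consistently so that every ``for large~$L$'' containment above genuinely holds, and in particular that~$\dist(\Pi_{1i},\Pi_{ii})$ stays of order~$\eps^{-1}L^{2+\alpha}$ despite the boxes being larger than those of Section~\ref{s:2box}. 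Everything else — the Jacobian computation, the change of variables, and the independence of the resampled direction — goes through word for word.
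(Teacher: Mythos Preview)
Your approach is exactly what the paper intends: it says explicitly that Lemma~\ref{l:theta3box} is ``proved in the same way'' as Lemma~\ref{l:theta}, and your transcription via the rotation $\mathcal{R}_{1i}$ is the right dictionary. The Jacobian bound and the change-of-variables step go through verbatim once one notes, as you do, that $|x-p|\geq \tfrac12\eps^{-1}L^{2+\alpha}$.

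There is one slip in your third ingredient. You write ``over the inter-plane distance $\leq \eps^{-1}L^{2+\alpha}$'', but hypothesis~\eqref{e:3boxdechyp0} only gives the \emph{lower} bound $|x_1-x_i|\geq \eps^{-1}L^{2+\alpha}$; no upper bound is assumed in Theorem~\ref{thm:3boxdec}. If $|x_1-x_i|$ is very large, a line from $S_{1i}$ with direction in $D_{\eps,L}^{1i}$ can land outside $S_{ii}'$, in which case $\Gamma_{ii}^{1i}$ does not resample and the product form does not follow for general $A\subseteq\Pi_{ii}$ by your argument. This is harmless for two reasons. First, whenever $A\subseteq S_{ii}'$ the event $\{p_{ii}\in A\}$ forces the landing into $S_{ii}'$, so $\Gamma_{ii}^{1i}$ does resample and the bound holds; and the coupling in the proof of Theorem~\ref{thm:3boxdec} (the analog of Proposition~\ref{p:sprinkle2}) only ever uses $A\subseteq S_{ii}\subseteq S_{ii}'$. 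Second, in the only application (Definition~\ref{def:kbadbox} and~\eqref{eq:pkdec}) the three centers lie in a common box at scale $k$, which supplies an upper bound of order $L_{k-1}^{2+\alpha+\beta}$ on the separations and makes your landing estimate valid as written. So the geometric bookkeeping you flagged as delicate is indeed the only place requiring care, and the fix is either to restrict to $A\subseteq S_{ii}'$ or to record the implicit upper bound on $|x_1-x_i|$.
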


\begin{proof}[Proof of Theorem~\ref{thm:3boxdec}]

Note that, for~$L$ large enough and~$i=2,3$, in order for a line with direction in~$D_{\eps,L}^{1i}$ to intersect~$B_1$, it has to intersect also~$ S_{1i}$. We obtain, in the manner of~\eqref{e:decorr1}, using Lemma~\ref{l:bend3box} and the monotonicity of the functions being considered,
\begin{align}
\label{e:decorr3}
\lefteqn{\IE\big[f_1\big(\mathcal{M}_{B_{1}}^{u,\rho}(\omega)\big) f_2\big(\mathcal{M}_{B_{2}}^{u,\rho}(\omega)\big)  f_3\big(\mathcal{M}_{B_{3}}^{u,\rho}(\omega)\big)    \big] }
\\
&\leq \IE \left[
\begin{array}{l}
f_1\big(\mathcal{M}_{B_{1}}^{u,\rho+\eps}\big(\Gamma_{12}^{12}(\eta^{12}_{S_{12}})+\Gamma_{13}^{13}(\eta^{13}_{S_{13}})+\eta_{1}^0\big)\big)
\\
\quad \times \IE \left[
\begin{array}{l}
f_2\big(\mathcal{M}_{B_{22}}^{u,\rho+\eps}\big(\Gamma_{22}^{12}(\eta^{12})+\eta_{2}^0\big)\big)
\\
\quad\times f_3\big(\mathcal{M}_{B_{33}}^{u,\rho+\eps}\big(\Gamma_{33}^{13}(\eta^{13})+\eta_{3}^0\big)\big)
\end{array}
\middle \vert \Gamma_{12}^{12}(\eta^{12}_{S_{12}}),\Gamma_{13}^{13}(\eta^{13}_{S_{13}}),\eta_1^0 \right]
\end{array}
\right].
 \nonumber
\end{align}
Using lemmas~\ref{l:unalignedboxes} and~\ref{l:theta3box}, we can construct two couplings, analogous to the one in Proposition~\ref{p:sprinkle}, simultaneously and independently. In this way we obtain a coupling between~$\Gamma^{12}_{22}\circ\Gamma^{12}_{12} (\eta^{12}_{S_{12}})$ conditioned on~$\eta^{12}_{S_{12}}$, $\Gamma^{13}_{33}\circ\Gamma^{13}_{13}(\eta^{13}_{S_{13}})$ conditioned on~$\eta^{13}_{S_{13}}$, and a process~$\omega_\delta\eqd \PLP(\delta \mu)$ independent from~$\eta^{12}_{S_{12}}$ and~$\eta^{13}_{S_{13}}$ such that, whenever the number of lines in~$\eta^{12}_{S_{12}}$ and~$\eta^{13}_{S_{13}}$ is not too large,
\[
\left(\Gamma^{12}_{22}\circ\Gamma^{12}_{12}(\eta^{12}_{S_{12}})\right)\cup \left(\Gamma^{13}_{33}\circ\Gamma^{13}_{13}(\eta^{13}_{S_{13}})\right)\subseteq \omega_\delta,
\]
where we identified the point measures with their supports in~$\LL$. This implies, by the same reasoning as in~\eqref{e:lkN12part2} and~\eqref{e:lkN12part3}, as well as the reversibility equations in \eqref{eq:gammalambdarevers},
\begin{equation}
\label{e:decorr4}
\begin{split}
\lefteqn{\IE\big[f_1\big(\mathcal{M}_{B_{1}}^{u,\rho}(\omega)\big) f_2\big(\mathcal{M}_{B_{2}}^{u,\rho}(\omega)\big)  f_3\big(\mathcal{M}_{B_{3}}^{u,\rho}(\omega)\big)    \big] }\phantom{******}
\\
&\leq
\IE\big[f_1\big((\mathcal{M}_{B_{1}}^{u,\rho + \eps}(\omega)\big) \big]
\IE\big[ f_2\big((\mathcal{M}_{B_{22}}^{u+\delta,\rho+\eps}(\omega)\big)  f_3\big(\mathcal{M}_{B_{33}}^{u+\delta,\rho+\eps}(\omega)\big)    \big]
\\
&\quad +c^{-1}\exp\big\{     -c  \delta\eps^{d-1}L^{\alpha(d-1)}      \big\}.
\end{split}
\end{equation}
Now applying Theorem~\ref{e:2boxdec} to the expectation of the product in the above right hand side, considering slightly larger boxes in order for them to be parallel, we obtain~\eqref{e:3boxdec} after substituting~$2\eps$ by~$\eps$ and~$2\delta$ by~$\delta$.
\end{proof}

\section{Renormalization strategy}
\label{s:renorm}

In this section we describe how one can use the decoupling inequality obtained in Theorem~\ref{thm:3boxdec} in order to prove results about the vacant set of the cylinder percolation process for small intensities of the parameter~$u$.
The idea is to use multi-scale renormalization to prove that with high probability there exists a fractal-like `carpet' where the percolation process is well behaved.
We start with the necessary definitions of scales in our renormalization scheme.

Throughout the next sections, the $0$-th scale, denoted by $L_0$, will play an important role: we will need to choose it to be sufficiently large in order for the statements that follow to hold. We will therefore consider a constant $\clzero > 0$, which will ultimately depend only on the dimension~$d$, but whose value will be updated as needed in $(\ref{eq:0boxdecay}, \ref{e:L_k_large}, \ref{e:khole2}, \ref{eq:mathsfGnonempty}, \ref{eq:0scale_path_isop}, \ref{eq:flow0_1})$, and we will take $L_0 > \clzero$. We let
\begin{equation}
\label{eq:alphafix}
\alpha \in \left(  1-\frac{\gamma}{2(d-1)}, 1 \right)
\end{equation}
and~$\beta\in(0,1-\alpha)$. We then define the sequence of growing scales
\begin{equation}
\label{eq:Lkdef}
L_k:= 17 \left( k^2 \cdot 2L_{k-1}^2 \lceil  L_{k-1}^{\alpha+\beta}   \rceil + L_{k-1}\right),
\end{equation}
for~$k\in\N$.

Despite the above definition looking involved, it is a simple choice that guarantees that the scales $L_k$ will satisfy the following properties:
\begin{enumerate}
\item $L_k \in 17 \N$ for every~$k \in \N$, as some of our arguments divide boxes $B(x_k, L_k)$ into boxes of radius $17^{-1} L_k$;
\item $L_k$ is roughly of order $L_{k-1}^{\alpha + \beta + 2}$;
\item $L_k$ is divisible by~$L_{k-1}$, but is not divisible by~$2L_{k-1}$, which we will need in order to partition the faces of boxes at scale $k$ into faces of boxes at scale $k - 1$. 
\end{enumerate}
Having defined the scales, we introduce, for each~$k\in \N$, the coarse-grained lattices
\begin{equation}
\label{eq:Mxdef}
\mathbb{M}_{k}=\mathbb{M}_k(L_0):=2L_k\Z^d \times \{k\} .
\end{equation}
If~$m=(x,k)\in\mathbb{M}_k$, we write
\begin{equation}
\label{eq:Bmdef}
B_m:=B_{\infty}\left( x, L_k \right),
\end{equation}
and we call~$B_m$ a \emph{box of the $k$-th scale}. We will sometimes abuse the notation and refer to~$m$ directly as a box. It will be crucial for us that the boxes $(B_m)_{m \in \mathbb{M}_k}$ for a gien $k \geq 0$ are not disjoint, the box $B_m$ will share faces with its neighboring boxes of the same scale.

During the renormalization argument, both the intensity of the cylinder's process, as well as the radius of our cylinders will vary from scale to scale.
This will allow us to use our decoupling result when relating probabilities of bad events in different scales.

To introduce these sequences, fix some~$\gamma \in (0, 1/5)$.
Given $L_0$ as above, we define the initial intensity $\tilde{u}$ and radius $\tilde{\rho}$ as
\begin{equation}
\label{eq:u0def}
\tilde u=\tilde u(\gamma,L_0):=\frac{1}{L_0^{d-1-\frac{\gamma}{2}}},\quad \tilde \rho:=2.
\end{equation}
The denisity $\tilde u$ is chosen such that w.h.p.\ at most $L_0^\gamma$ cylinders actually intersect the boxes at the $0$-th scale, as we will see in~\eqref{eq:0boxdecay}. For $k\in\N$, we then define
\begin{equation}
\label{eq:ukdef}
u_k=u_k(\gamma,L_0):=\tilde u\cdot \left(  1- \frac{1}{k+2}  \right),\quad \rho_k:=2\left(  1- \frac{1}{k+2}  \right).
\end{equation}

We can now define \emph{good} and \emph{bad} boxes in different scales.
For the first scale, we simply control the number of cylinders intersecting the box:

\begin{definition}
  \label{def:0badbox}
  Given~$m\in\mathbb{M}_0$, we say that the box~$B_m$ is $(u,\rho,0)$-bad (or simply bad) for~$\omega$ if the number of cylinders of radius~$\rho$ at level~$u$ of~$\omega$ intersecting $B_m$ is larger than~$ L_0^{\gamma}$.
\end{definition}

For other values of scale $k$ we will introduce the notion of bad box inductively.
Roughly speaking, we will say that a box is bad if it has at least three bad sub-boxes that are well separated and not aligned.
The requirements are inspired by the decoupling of three boxes introduced in Section~\ref{s:3box}.

\begin{definition}
  \label{def:kbadbox}
  Given~$k\in\N$ and~$m\in\mathbb{M}_k$, we say that the box~$B_m$ is $(u,\rho,k)$-bad (or simply bad) for~$\omega$ if there exist~$m_1,m_2,m_3\in \mathbb{M}_{k-1}$, represented respectively by~$(x_1,k),(x_2,k),(x_3,k)$ such that
  \begin{itemize}
  \item [(i)] $B_{m_i}\subset B_m$ for~$i=1,2,3$;
  \item [(ii)] $ \displaystyle
    |x_1-x_2|,|x_1-x_3|,|x_2-x_3|\geq k^2 \cdot   L_{k-1}^{2+\alpha}
    $;
  \item [(iii)]
    $ \displaystyle
    \sqrt{2} \geq \dist \left( \frac{x_1-x_2}{|x_1-x_2|}, \frac{x_1-x_3}{|x_1-x_3|}\right)\geq 30\sqrt{d} \frac{1}{k^2 L_{k-1}}
    $.
  \item[(iv)]$B_{m_1},B_{m_2},B_{m_3}$ are~$(u,\rho,k-1)$-bad for~$\omega$.
  \end{itemize}
\end{definition}

Given~$m\in \mathbb{M}_k$, we say that~$m$ is $(u,\rho,k)$-good (or simply good) if it is not~$(u,\rho,k)$-bad. We note that the event where~$m$ is bad for~$\omega$ is increasing.

\bigskip

In what follows we will show that for appropriate choices of parameters, the probability that a box is bad decays fast with the scale.
First consider the probabilities
\begin{equation}
\label{eq:pkdef}
p_k(u,\rho) := \sup_{m = (x, k) \in \mathbb{M}_k} \IP\left[ (x,k) \text{ is } (u, \rho, k) \text{-bad} \right] = \IP \left[ (0,k) \text{ is } (u, \rho, k) \text{-bad} \right] .
\end{equation}
We want to estimate the probabilities $p_k(u_k, \rho_k)$, starting from the initial scale. Note that the boxes at smaller scales \emph{will use $u_k$ and $\rho_k$ as parameters}. For example, $p_k(u_k, \rho_k)$ heuristically is the probability that there are three well separated and unaligned boxes at scale $k$ which are $(u_k, \rho_k, k-1)$-bad and contained in a specific box at scale $k$.

By Lemma~$(2.2)$ of~\cite{TW10b}, the number of cylinders of radius~$2$ intersecting a $0$-box is Poisson distributed with parameter bounded from above by $\uc{c:capacity} u (L_0+2)^{d-1}$. For~$\clzero$ large depending on $d$, ~$L_0 > \clzero$, and~$(x,0)\in\mathbb{M}_0$, we obtain, using the definition of~$u_0$ and of the Poisson distribution,
\nc{c:capacity}
\begin{equation}
\label{eq:0boxdecay}
p_0(u_0, \rho_0) \leq \exp\left\{  - \uc{c:capacity} \cdot L_0^\gamma \right \},
\end{equation}
where in the last equality we used the translation invariance of the cylinder's process.

We turn now to the estimate of $p_k$ for every $k$, which is obtained by induction.
First we use the decoupling of three boxes provided by Theorem~\ref{thm:3boxdec} and the stationarity of the cylinder process under translations, to obtain, for~$k\in\N$,
\begin{equation}
\label{eq:pkdec}
\begin{split}
\lefteqn{p_{k}(u_{k},\rho_{k})}\,
\\
&\leq \!\!\!\!\! \!\!\!\!\!\bigcup_{\substack{m_1,m_2,m_3\in \mathbb{M}_{k-1},\\B_{m_1},B_{m_2},B_{m_3}\text{satisfy}\\\text{(i,ii,iii) in Definition \ref{def:kbadbox}}}}\!\!\!\!\!\!\!\!\!\!\!
\IP\left(  (m_i,k-1)\text{ is }(u_{k},\rho_k,k-1)\text{-bad}  \text{ for }i=1,2,3  \right)
\\
&\leq \left(\frac{L_{k}}{L_{k-1}}\right)^{3d} \left( p_{k-1}(u_{k-1},\rho_{k-1})^3   + c\exp\big\{  -   c \tilde{u} k^{-2}\cdot k^{-2(d-1)}\cdot L_{k-1}^{\alpha(d-1)}      \big\}\right)
\\
&\leq ck^{6d} L^{(1+\alpha+\beta)3d}_{k-1} \left( p_{k-1}(u_{k-1},\rho_{k-1})^3   + c\exp\left\{  -   c \frac{1}{L_0^{d-1-\frac{\gamma}{2}}} \cdot k^{-2d}\cdot L_{k-1}^{\alpha(d-1)}      \right\}\right)
\\
&\leq ck^{6d} L^{(1+\alpha+\beta)3d}_{k-1} \left( p_{k-1}(u_{k-1},\rho_{k-1})^3   + c\exp\left\{ -    c \cdot k^{-2d}\cdot L_{k-1}^{\frac{\gamma}{2}-(d-1)(1-\alpha)}      \right\}\right)
\end{split}
\end{equation}

These equations allow us to prove our next result.
\begin{proposition}
  \label{p:pkind}
  There exists~$\delta>0$ such that, for $L_0 > \clzero$, with the notation above introduced, we have, for every~$k\geq 0$,
  \begin{equation}
    \label{eq:pkind}
    \begin{split}
      p_{k}(u_{k},\rho_{k})\leq \exp\left\{  -(\log L_k)^{1+\delta}         \right\}.
    \end{split}
  \end{equation}
\end{proposition}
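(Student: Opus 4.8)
The plan is to prove \eqref{eq:pkind} by induction on $k$, starting from the base case \eqref{eq:0boxdecay} and using the recursive inequality \eqref{eq:pkdec} for the inductive step. First I would fix the inductive hypothesis in the form $p_k(u_k,\rho_k)\leq\exp\{-(\log L_k)^{1+\delta}\}$, with $\delta$ to be chosen small (we have freedom since $\alpha$ was chosen so that $\tfrac{\gamma}{2}-(d-1)(1-\alpha)>0$, by \eqref{eq:alphafix}; call this positive exponent $\kappa:=\tfrac{\gamma}{2}-(d-1)(1-\alpha)$). The point of \eqref{eq:Lkdef} and property~2 is that $\log L_k = (\alpha+\beta+2)\log L_{k-1} + O(\log k)$, so $(\log L_k)^{1+\delta}\approx (\alpha+\beta+2)^{1+\delta}(\log L_{k-1})^{1+\delta}$; crucially $(\alpha+\beta+2)^{1+\delta}<3$ provided $\delta$ is small enough and $\alpha+\beta+2<3^{1/(1+\delta)}$, which holds because $\alpha+\beta<1$ forces $\alpha+\beta+2<3$. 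This inequality $3 > (\alpha+\beta+2)^{1+\delta}$ is exactly what lets the cube $p_{k-1}^3$ in \eqref{eq:pkdec} beat the next-scale target.

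Next I would handle the two terms in the last line of \eqref{eq:pkdec} separately. For the \emph{iterated term}: by the inductive hypothesis, $p_{k-1}^3\leq\exp\{-3(\log L_{k-1})^{1+\delta}\}$, and the polynomial prefactor $ck^{6d}L_{k-1}^{(1+\alpha+\beta)3d}$ contributes only $O(\log L_{k-1})$ to the exponent, which is negligible against $(\log L_{k-1})^{1+\delta}$ for large $L_0$ (here is one of the places where enlarging $\clzero$ is used). Since $3(\log L_{k-1})^{1+\delta}$ exceeds $(\log L_k)^{1+\delta}+O(\log L_{k-1})$ by the gap noted above, this term is bounded by, say, $\tfrac12\exp\{-(\log L_k)^{1+\delta}\}$. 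For the \emph{decoupling error term}: it is $c\,k^{6d}L_{k-1}^{(1+\alpha+\beta)3d}\exp\{-ck^{-2d}L_{k-1}^{\kappa}\}$; since a stretched-exponential in $L_{k-1}^\kappa$ dominates any power of $L_{k-1}$ and any power of $k$ (again for $L_0$ large, absorbing $k^{-2d}$ by noting $L_{k-1}\geq L_0^{(\alpha+\beta+2)^{k-1}}$ grows far faster than $k^{2d}$), this is also bounded by $\tfrac12\exp\{-(\log L_k)^{1+\delta}\}$ — one checks $L_{k-1}^{\kappa}\gg (\log L_k)^{1+\delta}$, which is immediate since the left side is polynomial in $L_{k-1}$ while the right side is polylogarithmic. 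Adding the two halves closes the induction.

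I would then verify the \textbf{base case}: from \eqref{eq:0boxdecay}, $p_0(u_0,\rho_0)\leq\exp\{-\useiconst{c:capacity}L_0^\gamma\}$, and since $L_0^\gamma\gg (\log L_0)^{1+\delta}$ for $L_0>\clzero$, we get $p_0\leq\exp\{-(\log L_0)^{1+\delta}\}$ after possibly enlarging $\clzero$ once more. A small bookkeeping point: the scale $\alpha$ is fixed in \eqref{eq:alphafix} so that $\kappa>0$, and one should double-check that the sprinkling parameters $\delta_k:=u_k-u_{k-1}=\tilde u/((k+1)(k+2))$ and $\rho_k-\rho_{k-1}$ are what actually appear in applying Theorem~\ref{thm:3boxdec} at each step — this is already folded into the third line of \eqref{eq:pkdec} via the factor $\tilde u k^{-2}$, so the telescoping of sprinklings is consistent (the radii stay in $[1,4]$ and the intensities stay below $\tilde u$, as required by the hypotheses of Theorem~\ref{thm:3boxdec}).

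\textbf{The main obstacle} I anticipate is not any single estimate but making the constants uniform in $k$: one must ensure that the loss coming from the polynomial prefactors $ck^{6d}L_{k-1}^{(1+\alpha+\beta)3d}$ and from the $k^{-2d}$ weakening of the decoupling error is \emph{summably} controlled, i.e. that the ``room'' $3-(\alpha+\beta+2)^{1+\delta}>0$ in the exponent is genuinely enough to absorb all these corrections simultaneously for \emph{every} $k$, not just asymptotically. This forces a careful, once-and-for-all choice of $\delta$ (small, depending on $\alpha,\beta,d$) and then a choice of $\clzero$ large enough that the induction is self-sustaining from $k=0$; the verification that $(\log L_k)^{1+\delta}\geq (\alpha+\beta+2)^{1+\delta}(\log L_{k-1})^{1+\delta}+ (\text{prefactor losses})$ uniformly is the crux, and it is exactly where the specific super-quadratic-but-subcubic growth rate \eqref{eq:Lkdef} of the scales is exploited.
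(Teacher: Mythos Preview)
Your proposal is correct and follows essentially the same route as the paper: induction on $k$ with base case \eqref{eq:0boxdecay}, then using \eqref{eq:pkdec} together with the key observation that $\alpha+\beta+2<3$ so that $3(\log L_{k-1})^{1+\delta}$ beats $(\log L_k)^{1+\delta}$ plus the polynomial prefactor losses for $\delta$ small, while the decoupling error $\exp\{-ck^{-2d}L_{k-1}^{\kappa}\}$ is absorbed because it is stretched-exponential in $L_{k-1}$. The paper organizes the step slightly differently---it first bounds the decoupling error by $\exp\{-3(\log L_{k-1})^{1+\delta}\}$ (your equation \eqref{e:L_k_large}) so that both contributions are handled in a single inequality---but the content is the same.
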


\begin{proof}
We prove Equation~\eqref{eq:pkind} by induction, as it is usual in such arguments. We note that the base case~$k=0$ follows as a direct consequence of~\eqref{eq:0boxdecay}. Assume then that~\eqref{eq:pkind} is valid for~$k-1$, with~$k\in\N$. Since~$(L_k)_{k\geq 0}$ grows faster than an exponential sequence with base~$L_0$, we obtain from the definition of~$\alpha$ in~\eqref{eq:alphafix} that, after possibly increasing $\clzero$, for all~$k\in\N$,
\begin{equation}
  \label{e:L_k_large}
  \exp\left\{  -3(\log L_{k-1})^{1+\delta}         \right\} \geq c\exp\left\{   -  c \cdot k^{-2d}\cdot L_{k-1}^{\frac{\gamma}{2}-(d-1)(1-\alpha)}      \right\}
\end{equation}
Equation~\eqref{eq:pkdec} then implies
\begin{equation}
\nonumber
\begin{split}
\lefteqn{p_{k}(u_{k},\rho_{k}) \exp\left\{  (\log L_k)^{1+\delta}         \right\}}
\\
&\leq  \exp\left\{  ((2+\alpha+\beta)\log L_{k-1}+2\log k+ c')^{1+\delta}    -3(\log L_{k-1})^{1+\delta}        \right\}
 ck^{6d} L^{3d(1+\alpha+\beta)}_{k-1} ,
\end{split}
\end{equation}
which, by the definition of~$\alpha$ and~$\beta$, is smaller than~$1$ for sufficiently small~$\delta$ and every~$L_0$ sufficiently large. Note that~$\delta$ does not depend on~$L_0$, as long as~$\clzero$ is sufficiently large. This finishes the induction argument, and the proof of the result.
\end{proof}

We now show that whenever a box of the~$k$-th scale is good, it will contain a fractal-like structure of boxes of all smaller scales. This structure will have nice connectivity properties we will explore in the upcoming sections. We first introduce a new notation to encode where the possible ``defects'' inside a good box may lie, and then state and prove a related geometric lemma.

\begin{definition}
  \label{def:khole}
  Given~$m\in\mathbb{M}_k$ with~$k\in\N$ and $\ell\in\LL$, we define~$\mathcal{D}_m(\ell)$ to be the set of boxes~$m'\in\mathbb{M}_{k-1}$ such that
  \begin{itemize}
  \item [(i)] $\displaystyle B_{m'}\subset B_m$;
  \item [(ii)] $\displaystyle \dist(B_{m'},\ell) \leq 2k^2L_{k-1}^{2 +\alpha}$.
  \end{itemize}
  We call~$\mathcal{D}_m(\ell)$ the $k$-\emph{defect} associated to~$m$ and~$\ell$.
\end{definition}

\begin{lemma}
  \label{l:khole}
  If~$m\in\mathbb{M}_k$ is $(u,\rho,k)$-good, with~$k\in\N$, then for $L_0 > \clzero$ there exists~$\ell\in\LL$ such that every~$m'\in\mathbb{M}_{k-1}$ satisfying
  \begin{equation}
    B_{m'}\subset B_m;\quad\quad B_{m'}\notin \mathcal{D}_m(\ell)
  \end{equation}
  is~$(u,\rho,k-1)$-good.
\end{lemma}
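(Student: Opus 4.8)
The plan is to argue by contraposition: assuming $m$ is $(u,\rho,k)$-good, I will exhibit a line $\ell$ for which the collection $\mathcal{B}$ of boxes $m'\in\mathbb{M}_{k-1}$ with $B_{m'}\subset B_m$ that are $(u,\rho,k-1)$-bad is contained in $\mathcal{D}_m(\ell)$; by Definition~\ref{def:khole} this is exactly the conclusion. If $\mathcal{B}=\varnothing$ any line works, so suppose $\mathcal{B}\neq\varnothing$ and write $x(m')$ for the centre of $m'$. First I would dispose of the degenerate case in which every two centres of boxes in $\mathcal{B}$ lie within Euclidean distance $k^2L_{k-1}^{2+\alpha}$ of each other: then for any fixed $m_0\in\mathcal{B}$ and any line $\ell$ through $x(m_0)$ one has $\dist(B_{m'},\ell)\le|x(m')-x(m_0)|\le k^2L_{k-1}^{2+\alpha}\le 2k^2L_{k-1}^{2+\alpha}$ for every $m'\in\mathcal{B}$, so $\mathcal{B}\subseteq\mathcal{D}_m(\ell)$.

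In the remaining case there exist $m_1,m_2\in\mathcal{B}$ with $|x_1-x_2|\ge k^2L_{k-1}^{2+\alpha}$ (writing $x_i:=x(m_i)$), and I would take $\ell$ to be the line through $x_1$ and $x_2$ and show by contradiction that no bad box lies far from it. So assume some $m_3\in\mathcal{B}$ has $\dist(x_3,\ell)\ge\dist(B_{m_3},\ell)>2k^2L_{k-1}^{2+\alpha}$, and check that $m_1,m_2,m_3$ satisfy all four conditions of Definition~\ref{def:kbadbox} for $B_m$, contradicting goodness of $m$. Conditions (i) and (iv) are built in; (ii) holds since $x_1,x_2\in\ell$ forces $|x_1-x_3|,|x_2-x_3|\ge\dist(x_3,\ell)>2k^2L_{k-1}^{2+\alpha}$, together with $|x_1-x_2|\ge k^2L_{k-1}^{2+\alpha}$. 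For (iii), set $h:=\dist(x_3,\ell)$ and let $A,B$ be the angles of the triangle $x_1x_2x_3$ at $x_1,x_2$, so $\sin A=h/|x_1-x_3|$ and $\sin B=h/|x_2-x_3|$; since the three centres lie in $B_\infty(x,L_k)$ their pairwise distances are $\le 2\sqrt d\,L_k$, and \eqref{eq:Lkdef} gives $L_k\le c\,k^2L_{k-1}^{2+\alpha+\beta}$, whence $\sin A,\sin B>c'L_{k-1}^{-\beta}$ for some $c'=c'(d)>0$. Because $A+B<\pi$, at least one of $A,B$ is $<\pi/2$; after swapping $m_1\leftrightarrow m_2$ if needed (which changes neither $\ell$, nor $h$, nor the previous checks) I may assume $A<\pi/2$, and then $\dist\!\big(\tfrac{x_1-x_2}{|x_1-x_2|},\tfrac{x_1-x_3}{|x_1-x_3|}\big)=2\sin(A/2)<\sqrt2$, which is the upper bound in (iii); moreover $\sin A>c'L_{k-1}^{-\beta}$ with $A<\pi/2$ gives $A>c'L_{k-1}^{-\beta}$, so $2\sin(A/2)\ge\tfrac2\pi A>c''L_{k-1}^{-\beta}\ge 30\sqrt d\,(k^2L_{k-1})^{-1}$, the last step using $1-\beta>0$, $k\ge1$ and $L_0$ large (one more update of $\clzero$). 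This contradiction shows $\mathcal{B}\subseteq\mathcal{D}_m(\ell)$ and finishes the argument.

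The step I expect to require the most care is the ``unaligned'' inequality (iii): one must exploit the polynomial gap between the defect scale $L_{k-1}^{2+\alpha}$ and the box scale $L_k\asymp L_{k-1}^{2+\alpha+\beta}$ to force a bad box lying off $\ell$ to subtend an angle of order $L_{k-1}^{-\beta}$, which comfortably exceeds the required $30\sqrt d/(k^2L_{k-1})$ as soon as $\beta<1$ and $L_0$ is large, while at the same time keeping the apex of that angle at an endpoint of $\ell$ (where the triangle is acute, by the elementary fact that $A+B<\pi$) so as not to violate the $\sqrt2$ upper bound — this is why the apex is chosen among the two endpoints of $\ell$ and never the third box. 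Everything else reduces to elementary planar trigonometry together with bookkeeping against Definitions~\ref{def:khole} and~\ref{def:kbadbox}.
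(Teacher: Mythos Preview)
Your proof is correct and follows essentially the same route as the paper's: handle the trivial/near-collinear cases, take $\ell$ through two far-apart bad centres, and derive a contradiction from a third bad box off $\ell$ by verifying the angle condition (iii) via the acute apex and the scale gap $L_k\asymp k^2L_{k-1}^{2+\alpha+\beta}$. The only cosmetic difference is that the paper uses radius $2k^2L_{k-1}^{2+\alpha}$ in the degenerate step and phrases the angle bound via $\arctan(d_3/t_3)$ after a rigid motion, whereas you use $k^2L_{k-1}^{2+\alpha}$ and the sine law; you are also slightly more explicit about the $\sqrt2$ upper bound, which the paper leaves implicit.
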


\begin{proof}
  Assume~$m$ is $(u,\rho,k)$-good. We refer to Figure~\ref{f:badbox} to help the reader visualize the argument that follows. If all the boxes of the scale~$k-1$ contained in~$m$ are $(u,\rho,k-1)$-good, we can just choose~$\ell$ arbitrarily and there is nothing to prove.

  Assume there exists a $(u,\rho,k-1)$-bad box $m_1=(x_1,k-1)$ such that~$B_{m_1}\subset B_m$. If there is no $(u,\rho,k-1)$-bad box contained in~$B_m$ and intersecting the complement of an Euclidean ball with center at~$x_1$ and radius~$2 k^2L_{k-1}^{2+\alpha}$, we can choose~$\ell$ arbitrarily containing~$x_1$ and there is nothing more to prove. If, however, there exists such a $(u,\rho,k-1)$-bad box $m_2=(x_2,k-1)$, we choose~$\ell$ as the line passing through~$x_1$ and~$x_2$.

  Finally, take $m_1, m_2$ and $\ell$ as above and assume moreover that there exists a $(u,\rho,k-1)$-bad box $m_3=(x_3,k-1)$ contained in~$B_m$ such that~$B_{m_3}\notin \mathcal{D}_m(\ell)$. We already know that~$m_1$, $m_2$ and~$m_3$ satisfy the conditions (i), (ii) and (iv) of Definition~\ref{def:kbadbox}. We will show that they also satisfy condition (iii), contradicting the hypothesis of~$m$ being $(u,\rho,k)$-good.
  \begin{figure}[ht]
    \centering
    \includegraphics[scale = .5]{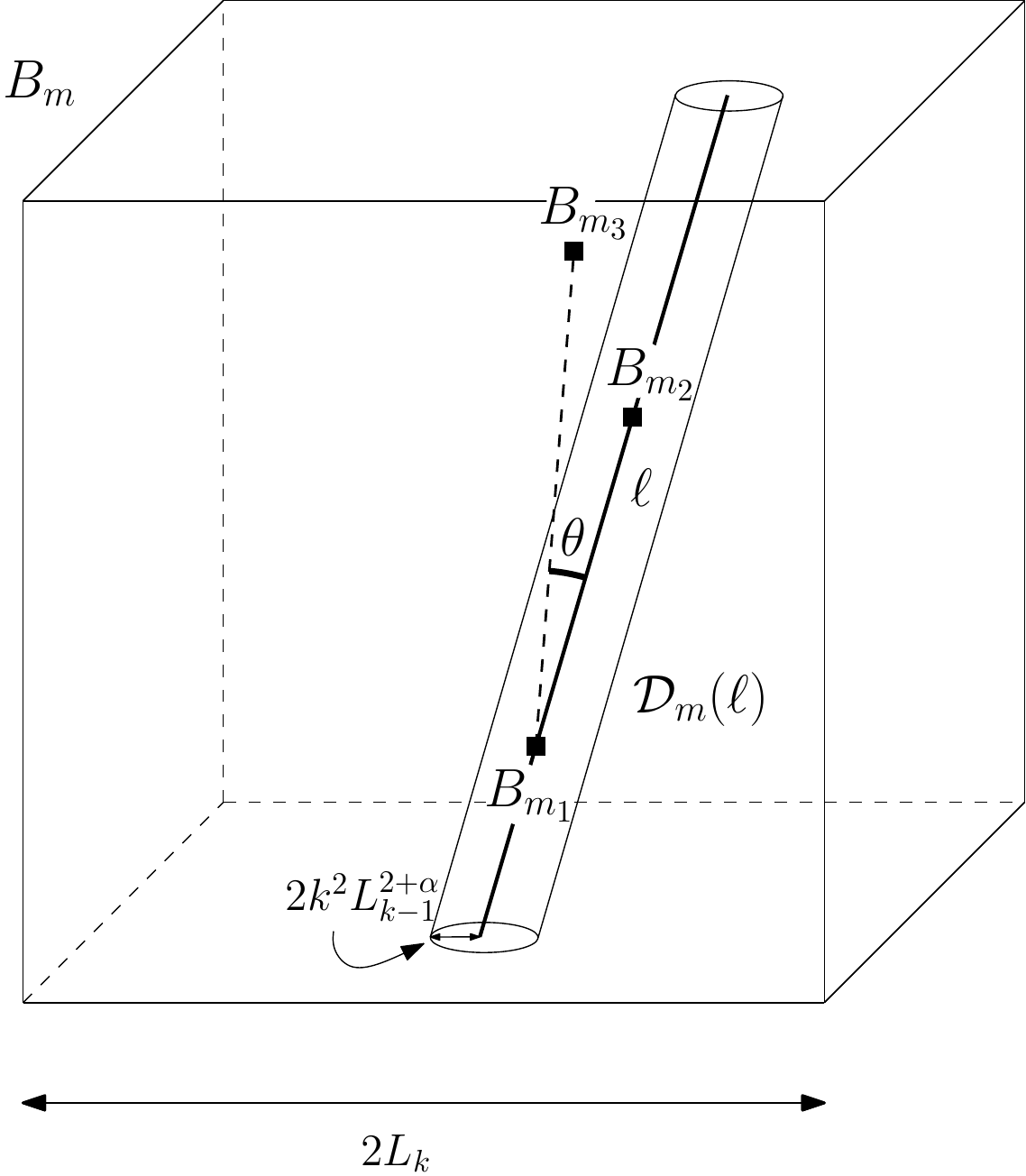}
    \vspace{0.1cm}
    \caption{A bad box~$B_m$. The existence of the unaligned boxes~$B_{m_1}$, $B_{m_2}$ and~$B_{m_3}$ makes an application of Theorem~\ref{thm:3boxdec} possible.}
    \label{f:badbox}
  \end{figure}
  Consider the triangle formed by the vertices~$x_1$, $x_2$ and~$x_3$. Either the angle corresponding to~$x_2$ or~$x_1$ must be acute. Without loss of generality, assume the latter holds, and denote this angle by~$\theta$. After a rigid motion of~$\R^d$, we may consider~$x_1$ as being the origin and the line~$\ell$ as being the axis~$\{t \cdot {\bf e}_d;t \in \R \}$. Let~$t_3$ denote the~$d$-th coordinate of~$x_3$ after this rigid motion, and~$d_3$ the distance between~$x_3$ and~$\ell$. Since~$\theta<\pi/2$, we have~$t_3>0$, and therefore, after possibly increasing $\clzero$,
  \begin{equation}
    \label{e:khole2}
    \begin{split}
      \theta=\arctan\left(  \frac{d_3}{t_3}\right)\geq \arctan\left(  \frac{c k^2 L_{k-1}^{2+\alpha}}{k^2 L_{k-1}^{\alpha+\beta} L_{k-1}^2} \right)
      \geq \arctan\left(  \frac{c}{ L_{k-1}^{\beta}} \right) \geq \frac{c}{ L_{k-1}^{\beta}} .
    \end{split}
  \end{equation}
  Now for sufficiently large~$L_0$ this implies condition (iii) of Definition~\ref{def:kbadbox}, finishing the proof of the result.
\end{proof}

\section{Efficient unoccupied paths}
\label{s:paths}

In this section we will lay the groundwork for the study of the energy of a flow in a discretized version of the vacant set~$\mathcal{V}_u^\rho$ using the renormalization results proved in Section~\ref{s:renorm}. This study will be completed in Section~\ref{s:flow}, where we will use the discrete paths constructed in the present section in order to show the existence of a discrete finite energy flow. We start with the necessary definitions.

For~$x,y\in\Z^d$, we let~${\bf line}(x,y)$ denote the closed line segment connecting~$x$ to~$y$ in~$\R^d$. We then define the set of points in~$\Z^d$ whose line segments associated to their nearest neighbors do not intersect the cylinder set:
\begin{equation}
  \mathsf{V}^u_\rho = \mathsf{V}^u_\rho(\omega) := \left\{
    x \in\Z^d;\, \right(\bigcup_{j=1}^d {\bf line}(x,x+{\bf e}_j)\cup {\bf line}(x,x-{\bf e}_j) \left) \cap \mathcal{C}^u_{\rho}(\omega) = \emptyset
  \right\}.
\end{equation}

We consider in the \emph{discrete vacant set}~$\mathsf{V}^u_\rho$ the graph structure inherited from the nearest-neighbors graph of~$\Z^d$.

\begin{remark}
  \label{r:discrete}
  The reason why we consider the discrete set $\mathsf{V}^u$ instead of its continuous counterpart is for technical simplification of the arguments, specially comparing the random walk on $\mathsf{V}^u$ instead of the Brownian Motion on $\mathcal{V}^u$.
  But we are confident that these results can be extended to analogous ones for the continuous setting.
\end{remark}

The flow we want to define using the carpet from Section~\ref{s:renorm} will be constructed from paths which will be defined in a hierarchical fashion at each scale. From a ``coarse'' path at scale~$k$, we will construct a finer path with of boxes at scale $k - 1$ and so on. We do so in order for these paths to avoid the defects present at every scale, so that they navigate through boxes where the cylinder set is well behaved.

For each good box $m$ we will now introduce the notion of the \emph{hole} $\mathsf{H}_m$ which roughly speaking will represent a region in $m$ to be avoided.
For the precise definition, we need to consider the cases $m \in \mathbb{M}_0$ in separate.

For $m\in\mathbb{M}_0$, the \emph{hole} $\mathsf{H}_m$ will correspond exactly to the closed sites in $B_m$ or more precisely $\mathsf{H}_m(\omega):= (B_m\cap\Z^d)\setminus \mathsf{V}^u_\rho$. For $k \geq 1$ and a good box~$m\in\mathbb{M}_k$ with associated $k$-defect $\mathcal{D}_m(\ell)$, we define the hole of~$m$ as
\begin{equation}
\label{eq:mhole}
\mathsf{H}_m=\mathsf{H}_m(\omega):=  \bigcup_{m' \in \mathcal{D}_m(\ell) }  B_{m'} \cap \Z^d.
\end{equation}

We define the set of unit vectors parallel to the cordinate axes
\begin{equation}
  \label{eq:unit_vectors}
  \mathsf{U} := \{ {\bf e}_1, \dots, {\bf e}_d, -{\bf e}_1, \dots, -{\bf e}_d \}
\end{equation}
Given~$m=(x,k) \in \mathbb{M}_k$, with~$k\geq 0$, and some ${\bf v} \in \mathsf{U}$, we define the \emph{face} of~$m$ associated to ${\bf v}$
\begin{equation}
  \label{eq:mface1}
  \mathsf{F}_{m,{\bf v}}
    :=
      \big\{ y \in B_m \cap \Z^d ; \, \langle y - x, {\bf v} \rangle = L_k \big\}.
\end{equation}

In order to transfer flow from one box to the adjacent one, we will first define a suitable collection of points and squares along their interfaces.
This is illustrated in Figure~\ref{f:Bmycollec} and it is rigorously defined below.
\begin{figure}[ht]
	\centering
	\includegraphics[scale = .5]{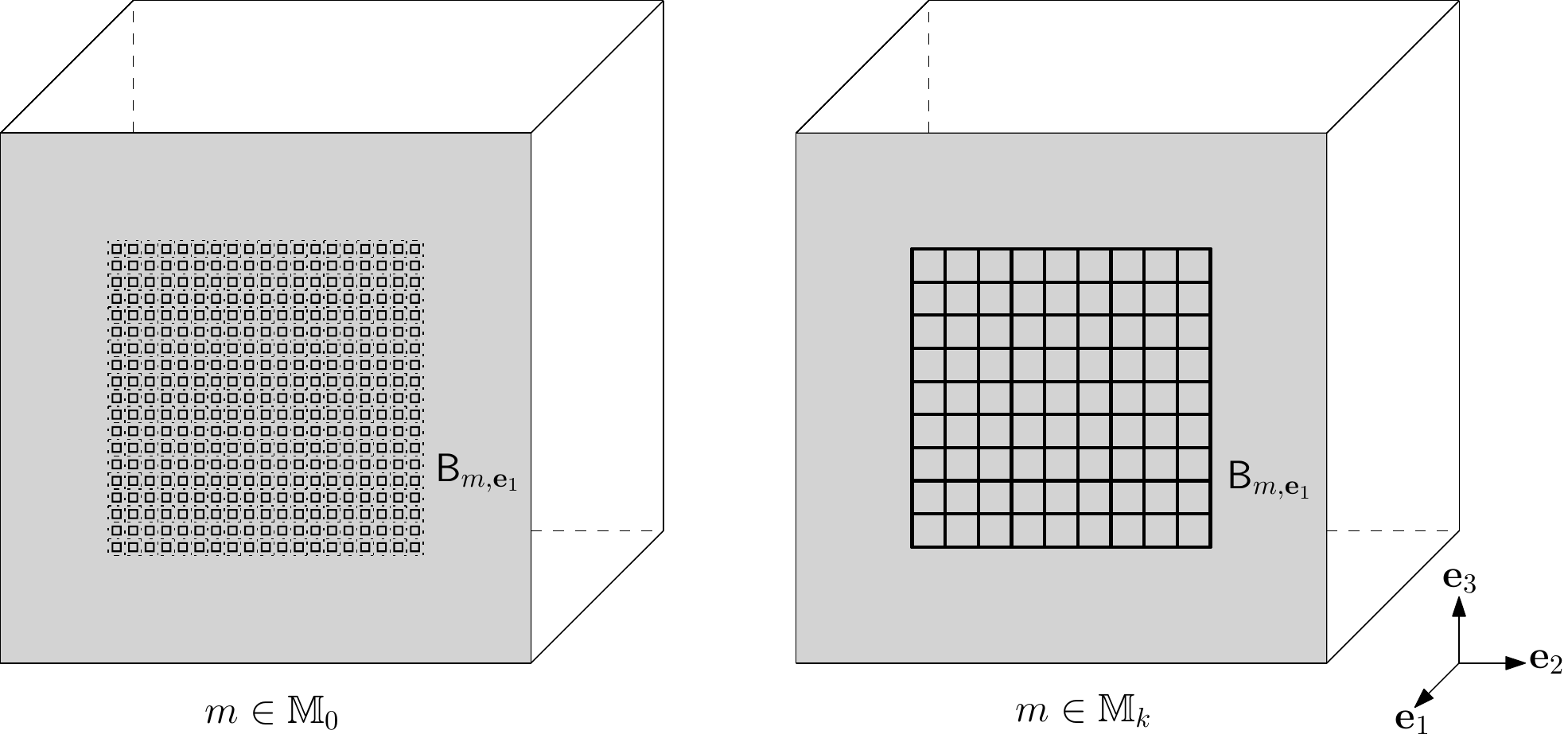}
	\vspace{0.1cm}
	\caption{The collection of boxes~$\mathsf{B}_{m,{\bf e}_1}$ in the case~$m\in\mathbb{M}_0$ and~$m\in\mathbb{M}_k$. In the first case, the discrete $(d-1)$-dimensional boxes have radius~$ 4^{-1} L_0^{\frac{7}{10}} $, in the second, radius~$17^{-1} L_k $. }
	\label{f:Bmycollec}
\end{figure}

We start at scale zero.
More precisely, for~$m = (x,0) \in \mathbb{M}_0$ and $j = 1, \dots, d$ we define the vertex collection
\begin{equation}
  \label{eq:fbbdef0}
  \dot{\mathsf{F}}_{m,{\bf e}_j} := \left\{
    \begin{split}
      \smash{
      x + L_0 {\bf e}_j + \sum_{i\neq j } a_i \Big\lfloor L_0^{\frac{7}{10}} \Big\rfloor {\bf e}_i;} \;
      & a_i = \Big(- 2^{-1} L_0^{\frac{3}{10}},2^{-1} L_0^{\frac{3}{10}} \Big)\cap \Z \\
      & i = 1, \dots, j - 1, j + 1, \dots, d
    \end{split}
  \right\},
\end{equation}
which is composed of lattice points on the face $\mathsf{F}_{m, {\bf e}_j}$, with inter-spacing $\lfloor L_0^{7/10} \rfloor$ and spanning a square with half the width of the box $B_m$, see Figure~\ref{f:Bmycollec}.

To each $y \in \dot{\mathsf{F}}_{m, {\bf e}_j}$ we associate a $(d - 1)$-dimensional ``small face''
\begin{equation}
  B_{m,{\bf e}_j}(y)
    :=
      B_\infty\Big(y, 4^{-1} L_0^{\frac{7}{10}}\Big) \cap \mathsf{F}_{m,{\bf e}_j}
\end{equation}
We also define the whole collection of such small faces
\begin{equation}
  \label{eq:bcaldef0}
  \mathsf{B}_{m,{\bf e}_j}:=\left\{ B_\infty\Big(y, 4^{-1} L_0^{\frac{7}{10}}\Big) \cap \mathsf{F}_{m,{\bf e}_j};\, y \in \dot{\mathsf{F}}_{m,{\bf e}_j} \right\},
\end{equation}
defining analogously the collections~$\dot{\mathsf{F}}_{m,-{\bf e}_j}$ and~$\mathsf{B}_{m,-{\bf e}_j}$. At scale~$0$, we will use these small faces such as~$B_{m,{\bf e}_j}(y)$ as bases of long prisms contained inside $B_m$. Good prisms will evade the hole $\mathsf{H}_m$, and we will use isoperimetric properties of~$\Z^d$ in order to connect good prisms inside~$B_m$ using paths of vacant vertices -- these will be the \emph{good paths} at scale~$0$.

We are now ready to treat the case $m=(x,k)\in\mathbb{M}_k$ with $k \geq 1$, which will have a different choice of sizes:
\begin{equation}
  \label{eq:fbbdefk}
  \dot{\mathsf{F}}_{m,{\bf e}_j} := \left\{
  \begin{split}
    \smash {
      x + L_k {\bf e}_j + \sum_{i\neq j }  17^{-1} L_k  a_i {\bf e}_i; \;
    }
    & a_i \text{ takes value in} \\
    & \{-8,-6,-4,-2,0,2,4,6,8\}
  \end{split}
  \right\},
\end{equation}
and
\begin{equation}
\label{eq:bcaldefk}
\mathsf{B}_{m,{\bf e}_j}:=\left\{  B_\infty\Big(y, 17^{-1} L_k \Big) \cap \mathsf{F}_{m,{\bf e}_j};\, y \in \dot{\mathsf{F}}_{m,{\bf e}_j} \right\},
\end{equation}
again defining analogously the collections~$\dot{\mathsf{F}}_{m,-{\bf e}_j}$ and~$\mathsf{B}_{m,-{\bf e}_j}$. In general, we will denote the element of~$\mathsf{B}_{m,{\bf e}_j}$ associated to~$y \in \dot{\mathsf{F}}_{m, {\bf e}_j}$ by~$B_{m,{\bf e}_j}(y)$. Note that the smaller faces at scale~$k \geq 1$ have size of the same order as $L_k$, which was not the case for scale~$0$.

Given $m = (x, k) \in \mathbb{M}_k$, we consider a graph structure in $B_m$ isomorphic to the finite lattice box with radius $8$, $B(0, 8) \cap \Z^d$. Recall that~$L_k\in 17\N$ and define the collection of points
\begin{equation}
  \label{eq:Lksobre17}
  \mathcal{B}_m := \left\{
    \begin{split}
      \smash{
        x + \sum_{i=1}^d a_i \frac{ L_k}{17} {\bf e}_i; \;
      }
      & a_i \text{ takes value in} \\
      & \{-16,-14,\dots,-2,0,2,\dots,14,16 \}
    \end{split}
  \right\},
\end{equation}
and notice that~$\mathcal{B}_m\subset \mathbb{M}_{k-1}$. Fixed some $m \in \mathbb{M}_{k}$ for $k \geq 1$ and any given $y \in \dot{\mathsf{F}}_{m, {\bf e}}$, there exists $y' \in \mathcal{B}_m$ such that $B_{m,{\bf e}_j}(y)\subset B_\infty(y', 17^{-1} L_k)$.
In fact, the~$(d-1)$-dimensional box~$B_{m,{\bf e}_j}(y)$ is contained in one of the faces of $B_\infty(y', 17^{-1} L_k)$.

We will use this finite lattice~$\mathcal{B}_m$ inside $B_m$ in order to construct collections of coarse-grained paths at the $k$-th scale which avoid the hole $\mathsf{H}_m$ and which behave well in our hierarchical construction. Since for $k \geq 1$ the problematic region~$\mathsf{H}_m$ is quite small, we can avoid it more easily than at scale $0$. Again, we will use prisms whose bases are faces in~$\mathsf{B}_{m,{\bf e}_j}$. Figure~\ref{f:dminus1defects} shows such a prism.

In order to be able to concatenate good paths from adjacent boxes, it will be necessary to introduce more notation related to the face shared by such boxes.
For~$k \geq 0$, if~$m=(x,k)$ and~$m'=(x + 2L_k {\bf e}_j,k)$, we have
\begin{equation}
  \mathsf{F}_{m,{\bf e}_j} = \mathsf{F}_{m',-{\bf e}_j}.
\end{equation}
If the boxes associated to $m$ and~$m'$ are both good, we say that the face~\emph{$\mathsf{F}_{m ,{\bf e}_j}$ is good}.
In this case we also define the projections of the holes~$\mathsf{H}_{m }$ and~$\mathsf{H}_{m'}$ onto~$\mathsf{F}_{m ,{\bf e}_j}$:
\begin{equation}
  \label{eq:mholedminus1}
  \mathsf{H}_{m ,{\bf e}_j}^{d-1}:= \left\{
    \begin{split}
      (y_1,\dots,y_d) \in \mathsf{F}_{m,{\bf e}_j}; \;
      & \exists (x_1,\dots,x_d)\in \mathsf{H}_{(x,k)}\cup\mathsf{H}_{(x + 2L_k {\bf e}_j,k)} \\
      & \text{ such that } x_i = y_i \text{ for }i\neq j
    \end{split}
  \right\},
\end{equation}
see Figure~\ref{f:dminus1defects}.

\begin{figure}[ht]
  \centering
  \includegraphics[scale = .6]{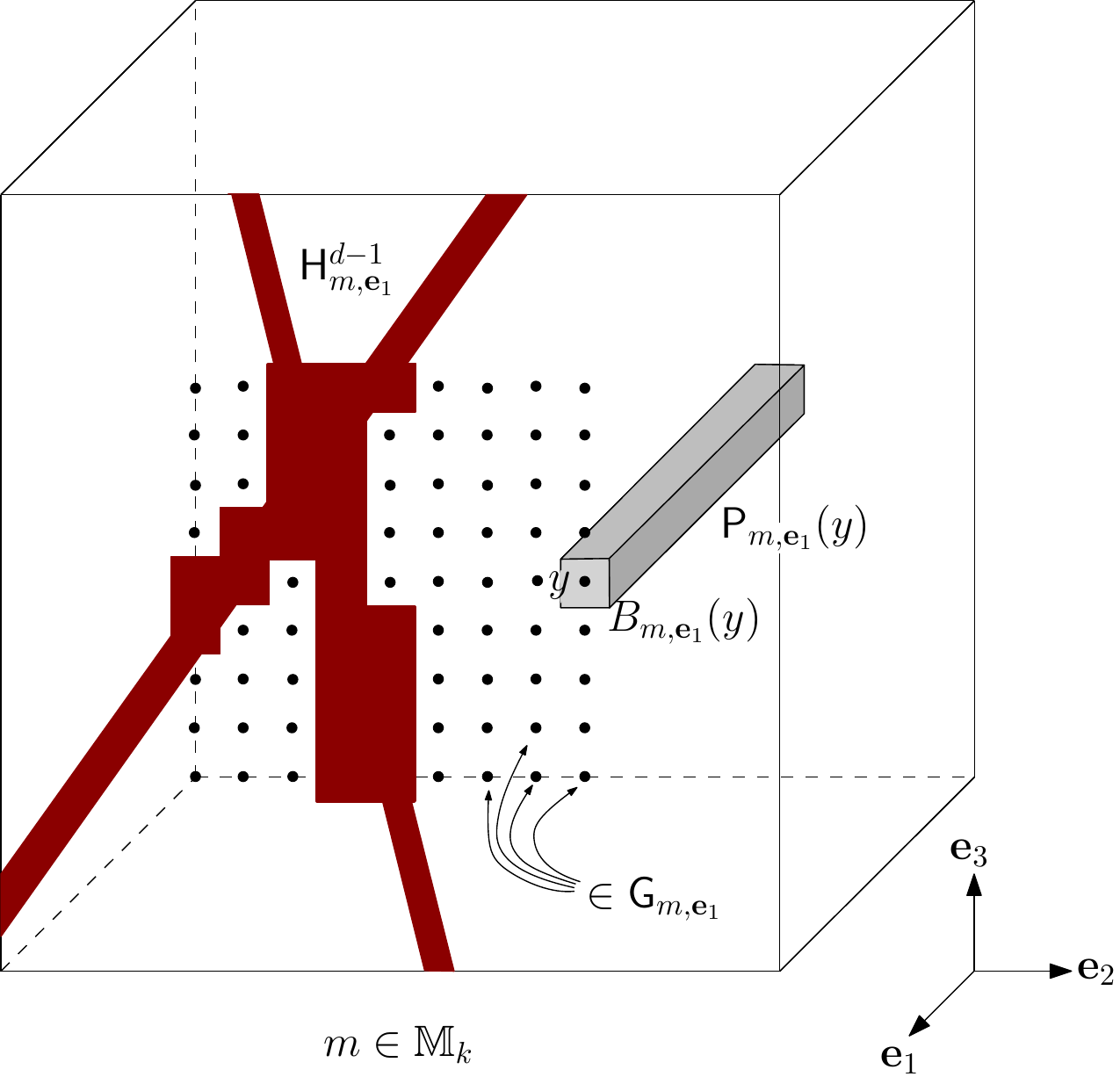}
  \vspace{0.1cm}
  \caption{Sets associated to a good face~$\mathsf{F}_{m,{\bf e}_1}$, when~$m\in\mathbb{M}_k$, and~$k\in\N$. When~$k=0$, an analogous picture holds place, this time the boxes~$B_{m,{\bf e}_1}(y)$ having mesoscopic radius~$ 4^{-1} L_0^{\frac{7}{10}}$. }
  \label{f:dminus1defects}
\end{figure}

We then define,
\begin{equation}
\label{eq:intbmejdef}
\mathsf{G}_{m ,{\bf e}_j}\equiv\mathsf{G}_{m',-{\bf e}_j}:= \left\{ y \in  \dot{\mathsf{F}}_{m ,{\bf e}_j};\,B_{m,{\bf e}_j}(y) \cap \mathsf{H}_{m ,{\bf e}_j}^{d-1} = \emptyset \right\},
\end{equation}
the sets of points in~$\dot{\mathsf{F}}_{m ,{\bf e}_j}$ which are centers of $(d-1)$-dimensional boxes in~$\mathsf{B}_{m ,{\bf e}_j}$, and whose associated boxes do not intersect the $(d-1)$-dimensional defect~$\mathsf{H}_{m ,{\bf e}_j}^{d-1}$. We define~$\mathsf{P}_{m ,{\bf e}_j}(y)$, the~\emph{prism} of~$y\in \mathsf{G}_{m ,{\bf e}_j}$, as the set of points in~$B_{m }\cap\Z^d$ whose orthogonal projection onto~$\mathsf{F}_{m ,{\bf e}_j}$ belongs to~$B_{m ,{\bf e}_j}(y)$. We have that, after possibly increasing the value of $\clzero$:
\begin{display}
  \label{eq:mathsfGnonempty}
  As long as the face~$\mathsf{F}_{m ,{\bf e}_j}$ is good, the set~$ \mathsf{G}_{m ,{\bf e}_j}$ is non-empty.
\end{display}
This can be seen using an elementary counting argument for~$L_0$ sufficiently large. We refer to Figure~\ref{f:dminus1defects}.

We now start the construction of the collections of \emph{efficient} paths: paths of unoccupied vertices that traverse long Euclidean distances without spending \emph{too much ``time''} in any one given box, and which do not intersect each other \emph{too much}. This will later be used in order to construct a low energy flow. We start by proving a lemma which starts this construction in the $0$-th scale, where we may allow some inefficiency. For $x \in \R^d$ and $r > 0$ we let $\mathrm{int}(B_\infty(x,r))$ denote the interior of the box~$B_\infty(x,r)$, that is, the box~$B_\infty(x,r)$ minus its faces.

\begin{lemma}
\label{l:0path}
Consider~$m\in\mathbb{M}_0$, $L_0 > \clzero$, and~${\bf v},{\bf w} \in \mathsf{U}$, ${\bf v} \neq {\bf w}$.
Then in the event where both~$\mathsf{F}_{m,{\bf v}}$ and~$\mathsf{F}_{m,{\bf w}}$ are good, given~$y_{\bf v}\in\mathsf{G}_{m,{\bf v}}$ and~$y_{\bf w}\in\mathsf{G}_{m,{\bf w}}$, there exists a path of neighboring vertices in~$\mathsf{V}^u_\rho\cap B_m$ connecting~$y_{\bf v}$ to~$y_{\bf w}$ of length at most~$(2L_0+1)^d$ which only intersects the faces of~$B_m$ at~$y_{\bf v}$ and~$y_{\bf w}$.
\end{lemma}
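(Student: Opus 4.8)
The plan is to construct the path in three pieces: a straight vacant segment running from $y_{\bf v}$ into the bulk of $B_m$, a symmetric one running from $y_{\bf w}$, and a connecting path between their endpoints produced by an isoperimetric argument inside $\mathrm{int}(B_m)$.

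The first point is that the membership $y_{\bf v}\in\mathsf{G}_{m,{\bf v}}$ forces the whole prism $\mathsf{P}_{m,{\bf v}}(y_{\bf v})$ to be vacant. Indeed $\mathsf{P}_{m,{\bf v}}(y_{\bf v})$ consists by definition of the lattice points of $B_m$ whose orthogonal projection onto $\mathsf{F}_{m,{\bf v}}$ lands in $B_{m,{\bf v}}(y_{\bf v})$; since the projection of any point of $\mathsf{H}_m$ lies in $\mathsf{H}_{m,{\bf v}}^{d-1}$ while $B_{m,{\bf v}}(y_{\bf v})$ avoids $\mathsf{H}_{m,{\bf v}}^{d-1}$, no point of the prism lies in $\mathsf{H}_m$, that is $\mathsf{P}_{m,{\bf v}}(y_{\bf v})\subset\mathsf{V}^u_\rho\cap B_m$, and likewise for ${\bf w}$. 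Next, starting at $y_{\bf v}$ and moving by unit steps along $-{\bf v}$ until the ${\bf v}$-coordinate matches that of the center of $B_m$, I obtain a vacant segment ending at a point $z_{\bf v}$; it stays inside $\mathsf{P}_{m,{\bf v}}(y_{\bf v})$ since all its vertices project onto $y_{\bf v}$, and all its vertices except $y_{\bf v}$ lie in $\mathrm{int}(B_m)$ since $y_{\bf v}\in\dot{\mathsf{F}}_{m,{\bf v}}$ keeps the coordinates transverse to ${\bf v}$ within $L_0/2$ of the center. The point $z_{\bf w}$ is defined symmetrically.

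It then remains to join $z_{\bf v}$ to $z_{\bf w}$ inside $B'\cap\mathsf{V}^u_\rho$ with $B':=\mathrm{int}(B_m)\cap\Z^d$. Set $S:=\mathsf{H}_m\cap B'$. Since $\mathsf{F}_{m,{\bf v}}$ being good implies that $B_m$ is good, at most $L_0^\gamma$ cylinders (of radius $\rho\le4$) meet $B_m$, and each of them contains at most $cL_0$ lattice points within distance $\rho+1$ of its axis, so $|S|\le|\mathsf{H}_m|\le cL_0^{1+\gamma}$. Now if $A$ is a connected component of $B'\setminus S$ with $|A|\le|B'|/2$, every edge leaving $A$ inside $B'$ ends in $S$, so the edge boundary of $A$ in $B'$ has at most $2d|S|$ edges; the isoperimetric inequality in a box then gives $|A|\le cL_0^{(1+\gamma)d/(d-1)}=:M$, and $M\le|B'|/2$ once $L_0>\clzero$. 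On the other hand $\mathsf{P}_{m,{\bf v}}(y_{\bf v})\cap B'$ is a connected vacant set containing $z_{\bf v}$ of size at least $cL_0^{1+7(d-1)/10}$, and since $1+\frac{7(d-1)}{10}>\frac{(1+\gamma)d}{d-1}$ for every $d\ge3$ and $\gamma\in(0,1/5)$ (the left side being at least $2.4$ and the right side below $1.8$), this exceeds $M$ once $L_0>\clzero$. Hence $z_{\bf v}$, and by the same argument $z_{\bf w}$, lie in the unique component of $B'\setminus S$ of size larger than $|B'|/2$, so there is a path in $B'\cap\mathsf{V}^u_\rho$ joining them.

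Concatenating the segment from $y_{\bf v}$ to $z_{\bf v}$, this connecting path, and the segment from $z_{\bf w}$ to $y_{\bf w}$, and then erasing loops, yields a self-avoiding path in $\mathsf{V}^u_\rho\cap B_m$ from $y_{\bf v}$ to $y_{\bf w}$; it visits at most $|B_m\cap\Z^d|=(2L_0+1)^d$ vertices, and since the two segments meet the faces of $B_m$ only at $y_{\bf v}$ and $y_{\bf w}$ while the connecting part lies in $\mathrm{int}(B_m)$, it meets those faces only at $y_{\bf v}$ and $y_{\bf w}$. The main obstacle is the isoperimetric step: one must check that the defect $\mathsf{H}_m$, although it may have as many as $\sim L_0^{1+\gamma}$ vertices, is much too thin to disconnect the fat vacant prisms from each other, which is exactly where the width exponent $7/10$ of the small faces $B_{m,{\bf e}_j}(y)$ and the bound $\gamma<1/5$ enter, and where the threshold $\clzero$ is increased.
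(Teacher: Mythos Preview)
Your proof is correct and follows essentially the same approach as the paper: both arguments observe that the prisms $\mathsf{P}_{m,{\bf v}}(y_{\bf v})$ and $\mathsf{P}_{m,{\bf w}}(y_{\bf w})$ are vacant, bound $|\mathsf{H}_m|$ by $cL_0^{1+\gamma}$, and invoke the isoperimetric inequality in the discrete box $\mathrm{int}(B_m)\cap\Z^d$ to connect the two prisms while avoiding $\mathsf{H}_m$. The paper phrases the isoperimetric step as ``$|\mathsf{H}_m|/|\mathsf{P}|^{(d-1)/d}$ can be made arbitrarily small,'' whereas you equivalently bound the size of any non-giant component of $B'\setminus S$ by $cL_0^{(1+\gamma)d/(d-1)}$ and then check this is smaller than the prism volume $cL_0^{1+7(d-1)/10}$; these are the same comparison of exponents.
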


\begin{remark}
  Note the inefficiency that we allow ourselves in bounding the length of the path by the volume of the box.
  This is not problematic at scale zero, since it only contributes to the energy of flows by a multiplicative constant depending on $L_0$.
\end{remark}

\begin{proof}
If such path exists, it must have length at most~$(2L_0+1)^d$ simply because this is the cardinality of the discrete box~$B_m\cap\Z^d$. To show the existence of the path with the required properties, we note that
\begin{equation}
\mathsf{P}_{m,{\bf v}}(y_{\bf v}), \mathsf{P}_{m,{\bf w}}(y_{\bf w})\subset \mathsf{V}^u_\rho\cap B_m.
\end{equation}
Furthermore, for sufficiently large~$L_0$, the cardinality of both these prisms intersected with~$\mathrm{int}(B_m)$ is larger than
\[
8^{-(d-1)} \cdot L_0^{\frac{7(d-1)}{10}+1} ,
\]
while the cardinality of~$\mathsf{H}_m$ is smaller than~$c\rho^{d-1} L_0^{1+\gamma}$. Since~$\gamma<1/5$ and~$d\geq 3$, the fraction
\begin{equation}
\label{eq:0scale_path_isop}
\frac{|\mathsf{H}_m \cap \mathrm{int}(B_m) |}{|\mathsf{P}_{m,{\bf v}}(y_{\bf v})\cap \mathrm{int}(B_m)|^{\frac{d-1}{d}} } 
\end{equation}
can be made arbitrarily small by increasing $L_0$. Since the discrete box~$\mathrm{int}(B_m)\cap \Z^d$ inherits the isoperimetric inequality of~$\Z^d$ with a smaller constant depending on the dimension, there must exist, after possibly increasing $\clzero$ and requiring~$L_0 > \clzero$, a path from~$\mathsf{P}_{m,{\bf v}}(y_{\bf v})$ to $\mathsf{P}_{m,{\bf w}}(y_{\bf w})$ which does not intersect~$\mathsf{H}_m$, nor the faces of~$B_m$. This finishes the proof of the lemma.
\end{proof}

The next lemma is the first step in the construction of a collection of efficient paths at a scale $k\in\N$. We construct coarse paths in $\mathcal{B}_m$, which will later in lemmas \ref{l:B17kminus1path} and \ref{l:kminus1paths} serve as guides to construct paths at scale $k - 1$. We denote by~$\mathsf{V}^{u,k-1}_{\rho}$ the set of vertices $x\in 2L_{k-1} \Z^d$ whose associated boxes $(x,k-1)\in\mathbb{M}_{k-1}$ are $(u,\rho,k-1)$-good. If~$m\in\mathbb{M}_{k}$, we let~$B_m^{k-1}$ denote the set of vertices of~$2L_{k-1} \Z^d$ whose associated boxes are contained in~$B_m$. Similarly, if~$y\in\mathsf{G}_{m,{\bf e}_j}$, we denote by~$ \mathsf{P}_{m,{\bf e}_j}^{k-1}(y)$ the set of vertices of~$2L_{k-1} \Z^d$ whose associated boxes are contained in~$ \mathsf{P}_{m,{\bf e}_j}(y)$. We also define~$B_{m ,{\bf e}_j}^{k-1}(y)$ as the set of points of~$B_{m ,{\bf e}_j}(y)$ contained in $ 2L_{k-1} \Z^d + L_{k-1}{\bf e}_j$, that is, points of the $(d-1)$-dimensional box associated to~$y$ which are translations by~$L_k{\bf e}_j$ of points from the $(k-1)$-th scale.
We will also utilize analogous notation when considering~$-{\bf e}_j$ instead of~${\bf e}_j$. Given~$m\in\mathbb{M}_k$, we consider in~$\mathcal{B}_m$ the nearest-neighbor graph structure, so that we may talk about adjacent points and paths in~$\mathcal{B}_m$.

\begin{lemma}
  \label{l:17path}
  Consider~$m\in\mathbb{M}_k$, with~$k\in\N$, ${\bf v},{\bf w} \in \mathsf{U}$, ${\bf v} \neq {\bf w}$, and assume the occurrence of the event where both~$\mathsf{F}_{m,{\bf v}}$ and~$\mathsf{F}_{m,{\bf w}}$ are good.
  Then, given~$y_{\bf v}\in\mathsf{G}_{m,{\bf v}}$ and~$y_{\bf w}\in\mathsf{G}_{m,{\bf w}}$, there exists a simple path $z_1,\dots, z_n$ of neighboring vertices in~$\mathcal{B}_m$, with $n\leq 17^d$, such that~$B_m(y_{\bf v})\subset B_\infty(z_1,17^{-1}L_{k})$, $B_m(y_{\bf w})\subset B_\infty(z_n,17^{-1}L_{k})$, and every box~$(x,k-1)\in\mathbb{M}_{k-1}$ such that~$x\in B_\infty(z_i,17^{-1}L_{k})$, $i=1,\dots,n$, is good.
\end{lemma}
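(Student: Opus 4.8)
The statement is purely deterministic in $\omega$, so the plan is entirely geometric and combinatorial. The first step is to note that since $\mathsf{F}_{m,{\bf v}}$ and $\mathsf{F}_{m,{\bf w}}$ are good, the box $m$ is in particular $(u,\rho,k)$-good, so Lemma~\ref{l:khole} applies and produces a line $\ell\in\LL$ with the property that every $(u,\rho,k-1)$-bad box contained in $B_m$ lies within Euclidean distance $2k^2L_{k-1}^{2+\alpha}$ of $\ell$ (this is just Definition~\ref{def:khole} read backwards). All subsequent work takes place inside the coarse graph $\mathcal{B}_m$ from \eqref{eq:Lksobre17}: up to the rescaling by $2\cdot 17^{-1}L_k$ it is the $\ell_\infty$-box of radius $8$ in $\Z^d$, its $17^d$ vertices index the cells $B_\infty(z,17^{-1}L_k)$, and the length bound $n\le 17^d$ will be automatic since any simple path in it is shorter than $|\mathcal{B}_m|=17^d$.

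The second step is to control where the bad $(k-1)$-boxes show up in $\mathcal{B}_m$. Call $z\in\mathcal{B}_m$ \emph{obstructed} if its cell $B_\infty(z,17^{-1}L_k)$ contains a $(u,\rho,k-1)$-bad box; I would estimate that any obstructed $z$ satisfies $\dist(z,\ell)\le \sqrt d\,17^{-1}L_k + 2k^2L_{k-1}^{2+\alpha}+\sqrt d\,L_{k-1}$, because the centre of that bad box is within $\ell_\infty$-distance $17^{-1}L_k$ of $z$ and within Euclidean distance $2k^2L_{k-1}^{2+\alpha}+\sqrt d\,L_{k-1}$ of $\ell$. By \eqref{eq:Lkdef} one has $17^{-1}L_k\ge 2k^2L_{k-1}^{2+\alpha+\beta}\ge 2k^2L_{k-1}^{2+\alpha}$, so for $L_0$ large this bound reads $\dist(z,\ell)\le (\sqrt d+1)\,17^{-1}L_k$; since neighbouring vertices of $\mathcal{B}_m$ lie at $\ell_\infty$-distance $2\cdot 17^{-1}L_k$, the obstructed vertices all sit in a ``tube'' of radius $O_d(1)$ lattice steps around $\ell$. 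The endpoints are then pinned down: as observed after \eqref{eq:Lksobre17} there is $z_{\bf v}\in\mathcal{B}_m$ with $B_m(y_{\bf v})$ a face of $B_\infty(z_{\bf v},17^{-1}L_k)$, and since both are radius-$17^{-1}L_k$ $\ell_\infty$-balls in the hyperplane $\mathsf{F}_{m,{\bf v}}$ they coincide; were this cell to contain a bad box it would belong to $\mathcal{D}_m(\ell)$ by the first step, hence its orthogonal projection onto $\mathsf{F}_{m,{\bf v}}$ would lie in $\mathsf{H}_{m,{\bf v}}^{d-1}$ (cf.\ \eqref{eq:mholedminus1}) and simultaneously inside the cell's face $B_m(y_{\bf v})$, contradicting $y_{\bf v}\in\mathsf{G}_{m,{\bf v}}$ through \eqref{eq:intbmejdef}; thus $z_{\bf v}$ is not obstructed, and likewise $z_{\bf w}$.

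The final and, I expect, main step is the purely combinatorial one: joining $z_{\bf v}$ to $z_{\bf w}$ by a simple path in $\mathcal{B}_m$ that stays off the obstructed tube. The idea is that $\ell$ is one-dimensional and $d\ge 3$, so a bounded-radius neighbourhood of it has codimension at least two and cannot disconnect the box $\mathcal{B}_m$, as long as that radius (in lattice steps, which is $O_d(1)$ by the second step) is small compared with $8$ — true for $L_0$ large. Concretely I would choose a coordinate direction ${\bf e}_{i_0}$ with $|\langle {\bf e}_{i_0},\ell\rangle|\le 1/\sqrt d$ (one exists since these squares sum to $1$ over the $d$ coordinates), so that along any coarse line in direction ${\bf e}_{i_0}$ the distance to $\ell$ grows at rate bounded below and the obstructed tube is met in at most $O_d(1)$ consecutive vertices; then there is ample room in the remaining $d-1$ coordinates to route a path from $z_{\bf v}$ to $z_{\bf w}$ that always circumvents the tube. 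The delicate point — and the reason this step is the obstacle — is that $\ell$ is an arbitrary, not axis-aligned, line, so one has to argue carefully that the obstructed set is genuinely thin transverse to $\ell$ and that the fixed box radius $8$ leaves enough slack for a fixed dimension $d$; once the inequality of the second step is in place, this reduces to a finite dimension-dependent check, and the path produced is simple (hence of length $\le 17^d$), joins the required cells, and by construction all the scale-$(k-1)$ boxes it passes through are good.
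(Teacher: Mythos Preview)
Your route differs from the paper's. The paper is constructive: since $y_{\bf v}\in\mathsf{G}_{m,{\bf v}}$ means $B_{m,{\bf v}}(y_{\bf v})$ misses the projection of the entire hole $\mathsf{H}_m$ (not just of the bad sub-boxes), the full prism $\mathsf{P}_{m,{\bf v}}^{k-1}(y_{\bf v})$ --- all $17$ cells of $\mathcal{B}_m$ in a row --- avoids the defect $\mathcal{D}_m(\ell)$. From each prism the paper grows a $(d-1)$-dimensional sheet by translating successively in directions ${\bf e}_2,\dots,{\bf e}_{d-1}$ (resp.\ ${\bf e}_1,{\bf e}_3,\dots,{\bf e}_{d-1}$), at each step choosing the sign so as to miss the defect; this is possible because the current sheet already misses it and the defect cylinder, being much thinner than one cell, cannot lie on both sides at once. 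Finally the two parallel sheets are joined by one of $4^{d-1}$ straight paths in the ${\bf e}_d$ direction, not all of which the one-dimensional defect can block. Your route is instead topological: overcount the obstructed cells by a tube around $\ell$ and argue the complement stays connected.

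There is, however, a genuine gap in your final step. Your Euclidean bound $\dist(z,\ell)\le(\sqrt d+1)\,17^{-1}L_k$ is correct, but it yields a tube of radius $(\sqrt d+1)/2$ coarse-lattice steps, and this number does \emph{not} depend on $L_0$; your phrase ``small compared with $8$ --- true for $L_0$ large'' is simply false. For $d$ in the low hundreds the tube radius already exceeds $8$, and the routing via ${\bf e}_{i_0}$-lines collapses as well: the portion of such a line inside the tube has length roughly $(\sqrt d+1)/\sqrt{1-1/d}$ lattice steps, which eventually swallows all $17$ vertices. You lose a factor $\sqrt d$ by passing from the $\ell_\infty$-geometry of the cells to the Euclidean distance of their centres. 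The repair is to note that the defect cylinder has radius $2k^2L_{k-1}^{2+\alpha}=o(17^{-1}L_k)$, so a cell of $\mathcal{B}_m$ can be obstructed only if the line $\ell$ itself meets that cell (up to an $o(1)$ margin); the obstructed set is then the discretisation of a single line in the $17^d$ grid, and removing \emph{that} does not disconnect for $d\ge 3$. With this sharpening your strategy goes through; the paper's sheet construction sidesteps the issue altogether by never needing a quantitative tube radius.
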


\begin{proof}
  Since~$17^d$ is the cardinality of~$\mathcal{B}_m$, if a suitable path exists, its length automatically satisfies the requested upper bound. Furthermore, we can focus on the case when ${\bf v} \neq -{\bf w}$, that is, when the faces considered are adjacent. Indeed, If ${\bf v} = -{\bf w}$, we can choose an ${\bf u} \in \mathsf{U}$ orthogonal to ${\bf v}$, and if we can construct simple paths connecting ${\bf u}$ to ${\bf v}$ and ${\bf u}$ to $-{\bf v}$, we can also construct a simple path between ${\bf v}$ and $-{\bf v}$.

  We notice that, since~$y_{\bf v}\in\mathsf{G}_{m,{\bf v}}$ and~$y_{\bf w}\in\mathsf{G}_{m,{\bf w}}$,~$\mathsf{P}_{m,{\bf v}}^{k-1}(y_{\bf v})$ and~$\mathsf{P}_{m,{\bf w}}^{k-1}(y_{\bf w})$ are contained in~$\mathsf{V}^{u,k-1}_{\rho}$, the set of vertices whose associated boxes are~$(k-1)$-good. Furthermore, each of these prisms is the union of~$17$ boxes with center in~$\mathcal{B}_m$ and radius~$17^{-1} L_k$, these boxes sharing faces in the prism's corresponding directions. That is, the prisms already contain a long path of boxes with centers in~$\mathcal{B}_m$ and radius~$17^{-1}L_k$ whose vertices of the~$(k-1)$-th scale are contained in~$\mathsf{V}^{u,k-1}_{\rho}$. We will show now how to join these paths while avoiding the hole~$\mathsf{H}_m\cap 2L_{k-1}\Z^d$.

  \begin{figure}
    \centering
    \begin{subfigure}[b]{0.32\textwidth}
        \centering
        \includegraphics[width=\textwidth]{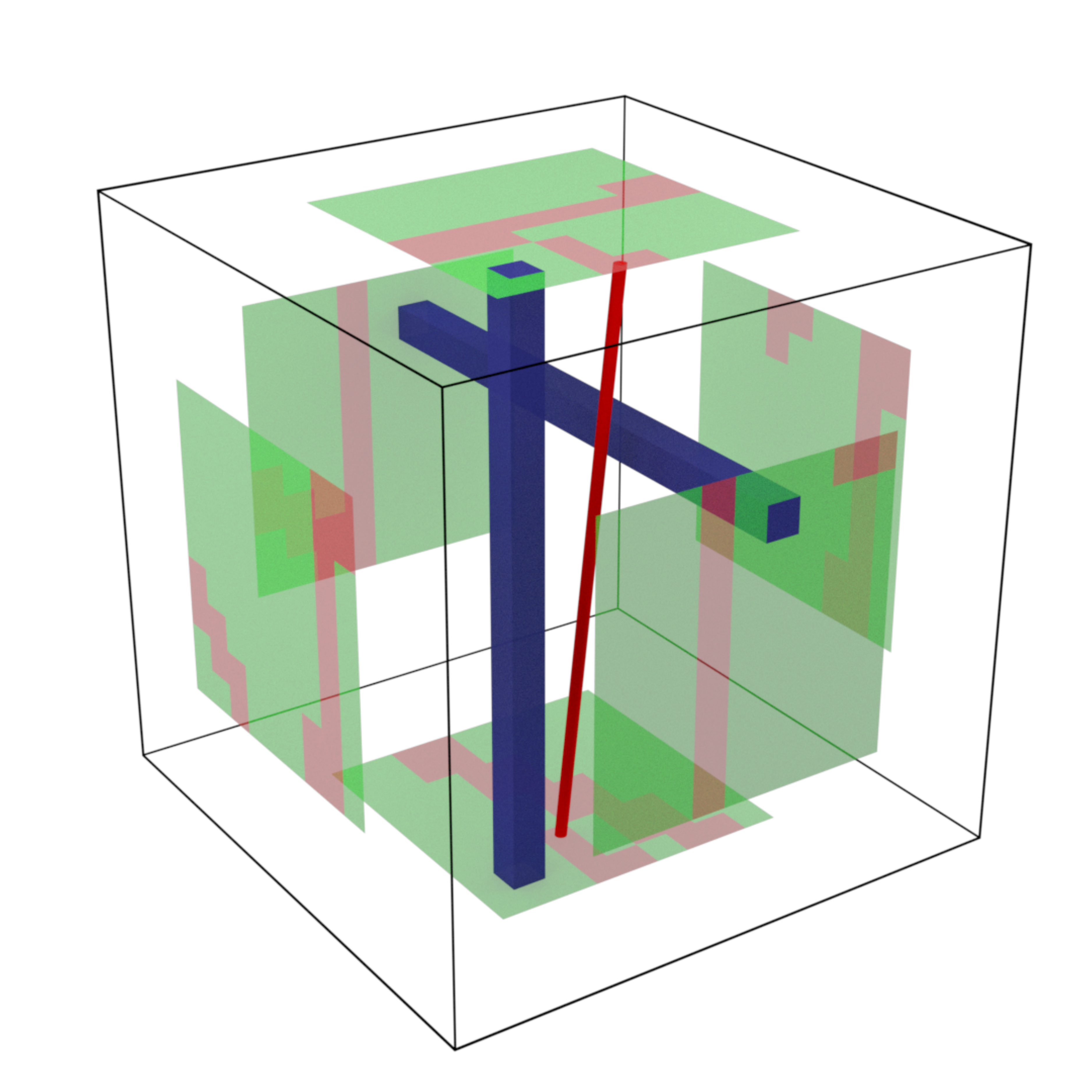}
        \label{f:B17_1}
    \end{subfigure}
    \hfill
    \begin{subfigure}[b]{0.32\textwidth}
        \centering
        \includegraphics[width=\textwidth]{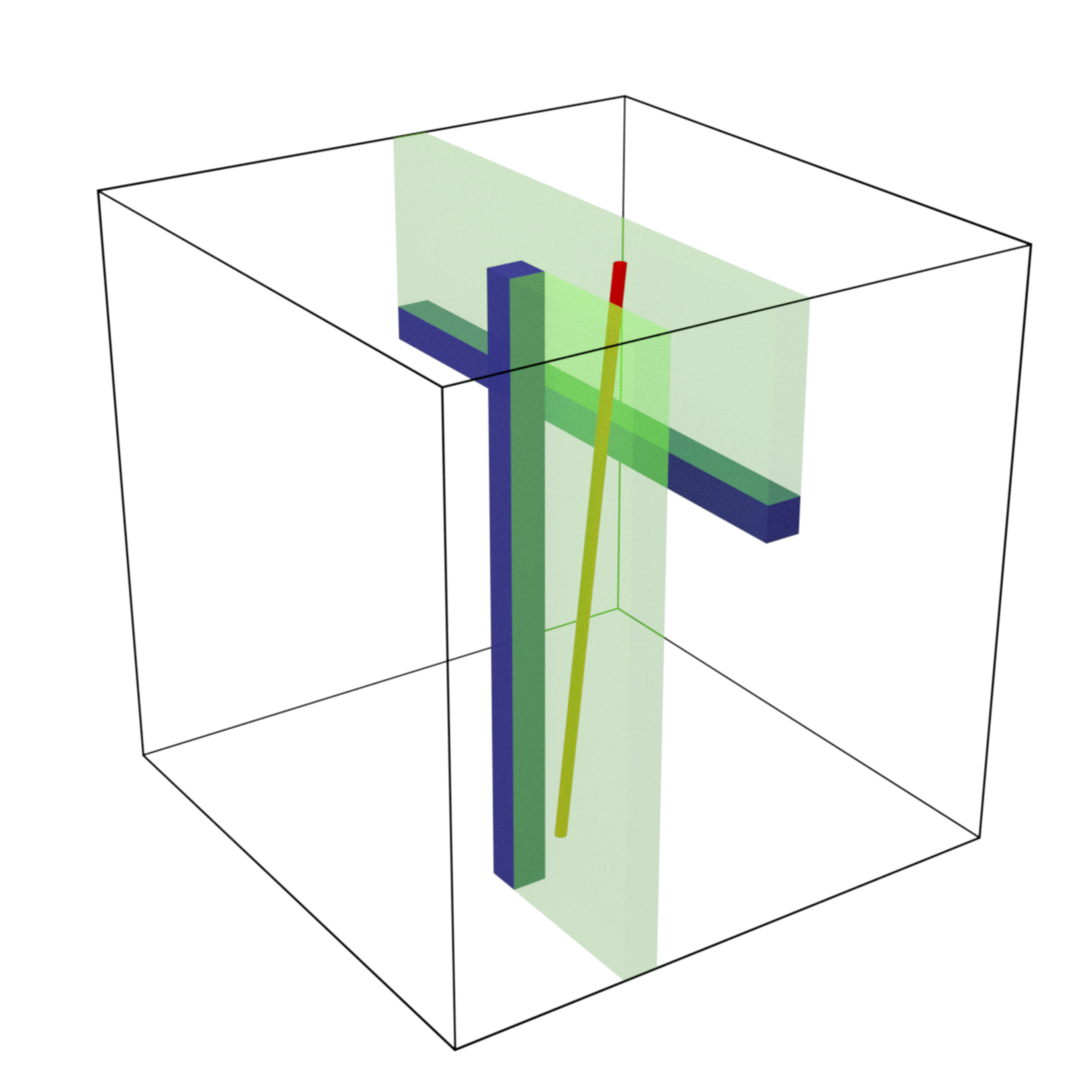}
        \label{f:B17_2}
    \end{subfigure}
    \hfill
    \begin{subfigure}[b]{0.32\textwidth}
        \centering
        \includegraphics[width=\textwidth]{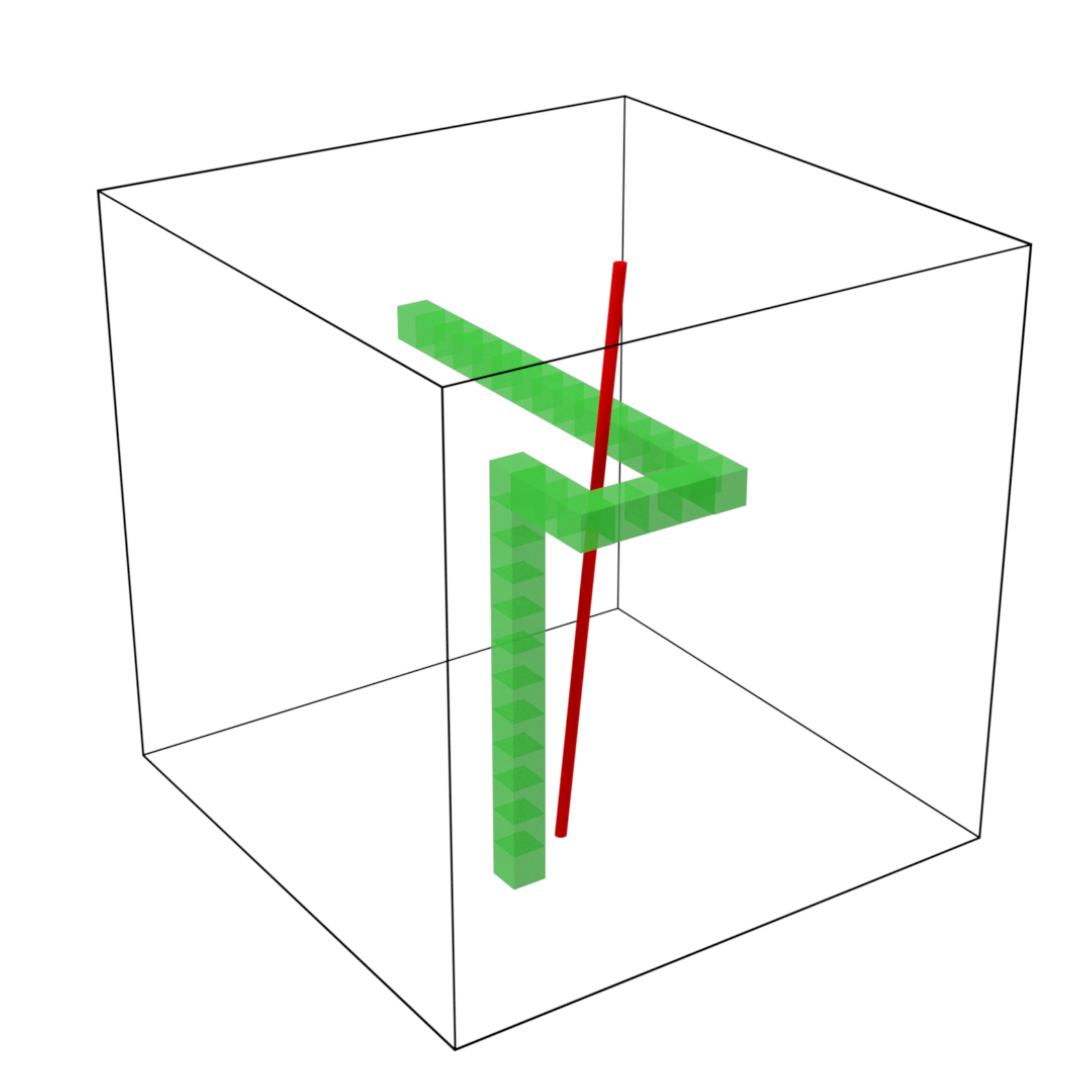}
        \label{f:B17_4}
    \end{subfigure}
    \caption{$3$-dimensional representation of the construction present in the proof of Lemma~\ref{l:17path}. We want to join the two prisms above by boxes of $\mathcal{B}_m$ that do not meet the cylinder-like defect $\mathcal{D}_m$. We construct from the prisms two parallel ``sheets'' of boxes that do not intersect the defect. There exists at least $4^{d - 1}$ rectilinear paths connecting the two sheets, and the defect cannot block them all. In this way, the desired path of boxes can be constructed.}
    \label{f:B17}
  \end{figure}

  Without loss of generality, we assume ${\bf v} = {\bf e}_1$ and ${\bf w} = {\bf e}_2$. In what follows we consider~$\mathcal{B}_{m}$ as a \emph{subgraph} of the $d$-dimensional hypercubic lattice $(2 L_k / 17) \Z^d$ -- specifically, as a box with side-length $17$. In this way, we can regard the prism
  \begin{equation*}
    \mathsf{P}_1 := \mathsf{P}_{m,{\bf e}_1}^{k-1}(y_{{\bf e}_1})
  \end{equation*}
  as a union of $17$ aligned ``box-vertices'', doing the same for
  \begin{equation*}
    \mathsf{P}_1' := \mathsf{P}_{m,{\bf e}_2}^{k-1}(y_{{\bf e}_2}).
  \end{equation*}
  The Lemma will be proved once we show that there exists a path of boxes inside $\mathcal{B}_m$ from~$\mathsf{P}_1$ to $\mathsf{P}_1'$ which avoids boxes intersecting the defect $\mathcal{D}_m$. We refer to Figure~\ref{f:B17} for an overview of the construction.

  We consider the translations of $\mathsf{P}_1$ by integer multiples of $(2 L_k / 17) {\bf e}_2$. By definiton of the defect $\mathcal{D}_m$, it can either intersect translations of $\mathsf{P}_1$ by positive integer multiples of $(2 L_k / 17) {\bf e}_2$, or by negative integer multiples, but not both. If it does not intersect the positive translations, we define
  \begin{equation*}
    \mathsf{P}_2 := B_m \cap \bigcup_{i \geq 0} \big( \mathsf{P}_1 + (2 L_k / 17) i \cdot {\bf e}_2 \big),
  \end{equation*}
  otherwise, we let
  \begin{equation*}
    \mathsf{P}_2 := B_m \cap \bigcup_{i \geq 0} \big( \mathsf{P}_1 - (2 L_k / 17) i \cdot {\bf e}_2 \big).
  \end{equation*}
  We then continue this process for each vector ${\bf e}_n$, with $n = 2, \dots, d - 1$, considering translations of $\mathsf{P}_{n - 1}$ by positive and negative integer multiples of $(2 L_k / 17) {\bf e}_n$, and defining
  \begin{equation*}
    \mathsf{P}_n := B_m \cap \bigcup_{i \geq 0} \big( \mathsf{P}_{n - 1} \pm (2 L_k / 17) i \cdot {\bf e}_n \big),
  \end{equation*}
  choosing the sign in the $\pm$ symbol above so that $\mathsf{P}_n$ does not intersect the defect associated to the box. We thus obtain a ``$(d - 1)$-dimensional'' sheet of boxes~$\mathsf{P}_{d - 1}$. We perform the same construction starting with $\mathsf{P}_1'$
  and enlarging this set by uniting it with successive translations by multiples of the vectors ${\bf e}_1, {\bf e}_3, {\bf e}_4, \dots, {\bf e}_{d - 1}$, selecting the sign appropriately so they also do not intersect the defect, finally obtaining another sheet $\mathsf{P}_{d - 1}'$.

  The sheets $\mathsf{P}_{d - 1}$ and $\mathsf{P}_{d - 1}'$ are parallel by construction: they both have thickness consisting of \emph{one} box in the direction ${\bf e}_d$. Also, by construction, the projections of these sheets onto the $(d - 1)$-dimensional sublattice $(2 L_k / 17) \Z^{d - 1} \times \{0\}$ intersect in a $(d - 1)$-dimensional box of side-length at least $4$. This implies the existence of $4^{d - 1}$ disjoint linear paths of boxes on $\mathcal{B}_m$ from $\mathsf{P}_{d - 1}$ to $\mathsf{P}_{d - 1}'$, these path being parallel to ${\bf e}_d$. By the definition of the defect $\mathcal{D}_m$, it cannot intersect all of these paths, and we obtain the desired result.
\end{proof}

We now continue with the second step of the hierarchical construction of good paths: we prove a very elementary lemma showing how to construct good paths at scale $k - 1$ inside a box of $\mathcal{B}_m$, $m \in \mathbb{M}_k$, which is completely vacant at scale~$k - 1$. The recipe will later be used in Lemma~\ref{l:kminus1paths} to concatenate paths at scale $k - 1$ inside boxes of the $k$-th scale.
\begin{figure}[ht]
	\centering
	\includegraphics[scale = 1]{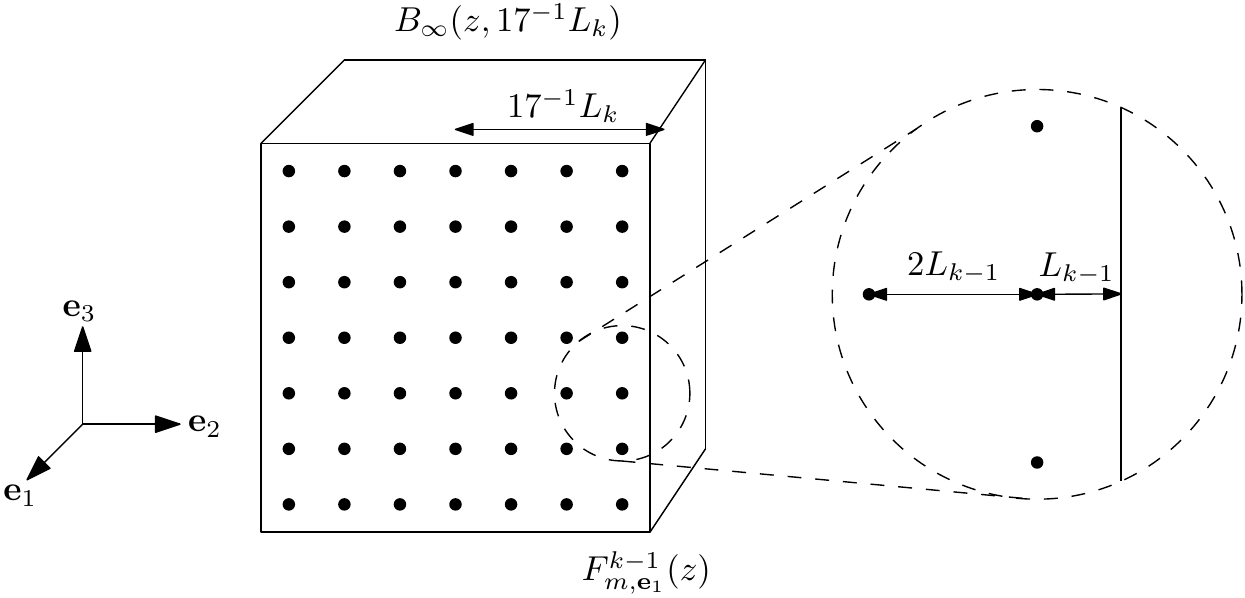}
	\vspace{0.1cm}
	\caption{The points in the image represent the set~$F^{k-1}_{m,{\bf e}_1}(z)$, a subset of the box~$B_\infty(z,17^{-1}L_k)$.}
	\label{f:Fkdef}
\end{figure}
We will consider in~$\mathsf{V}^{u,k-1}_\rho$ the nearest-neighbor graph structure and define, for~$m\in\mathbb{M}_k$, $k\in \N$, $z\in \mathcal{B}_m$, and ${\bf v} \in \mathsf{U}$, the set~$F^{k-1}_{m,{\bf v}}(z)$ as the points of~$2L_{k-1}\Z^d + L_{k-1}{\bf v}$ belonging to the face of the box~$B_\infty(z,17^{-1}L_k)$ associated to~${\bf v}$:
\begin{equation}
\label{eq:face17def}
\begin{split}
F^{k-1}_{m,{\bf v}}(z)
&:=
\left\{
\begin{array}{c}
 y \in B_\infty(z,17^{-1}L_k) \cap \Z^d ; \, \langle y-z,{\bf v} \rangle=  17^{-1}L_k     ,
\\
y \in ( 2L_{k-1}\Z^d + L_{k-1}{\bf v} )
\end{array}
\right\}
.
 \end{split}
\end{equation}
Note that, since $L_k$ is divisible by~$L_{k-1}$ and not by~$2L_{k-1}$, the points of~$2L_{k-1}\Z^d$ do not belong to faces of boxes associated to $\mathcal{B}_m$. We refer to Figure~\ref{f:Fkdef}.

\begin{lemma}
  \label{l:B17kminus1path}
  Given~$m\in\mathbb{M}_k$, $k\in\N$, and~$z\in \mathcal{B}_m$, assume that the box
  \[	B_\infty(z,17^{-1}L_k)\cap 2L_{k-1}\Z^d  \]
  is contained in~$\mathsf{V}^{u,k-1}_\rho$. Then, given the sets~$F^{k-1}_{m,{\bf v}}(z),F^{k-1}_{m,{\bf w}}(z) $ associated respectively to two distinct unit vectors~${\bf v},{\bf w} \in \mathsf{U}$, there exists a collection~$\mathcal{T}({\bf v},{\bf w},z,k-1)$ of vertex-disjoint nearest-neighbor paths of~$B_\infty(z,17^{-1}L_k)\cap 2L_{k-1}\Z^d$ such that for every~$x_0\in F^{k-1}_{m,{\bf v}}(z)$
  there exists
  \[
    (z_0,z_1,\dots,z_n)\in \mathcal{T}(y_{\bf v},y_{\bf w},k-1)
  \]
  such that~$n\leq 4\cdot 17^{-1} L_k L_{k-1}^{-1} $, $x_0=z_0 + L_{k-1}{\bf v}$, $z_n + L_{k-1}{\bf w}$ is in~$F^{k-1}_{m,{\bf w}}(z) $, and \[z_0,z_1,\dots,z_n\in\mathsf{V}^{u,k-1}_\rho. \]
\end{lemma}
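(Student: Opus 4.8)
The key observation is that this lemma is a purely combinatorial statement about routing disjoint lattice paths inside a box: since by hypothesis every vertex of $B_\infty(z,17^{-1}L_k)\cap 2L_{k-1}\Z^d$ lies in $\mathsf{V}^{u,k-1}_\rho$, any family of nearest-neighbour paths contained in this box automatically has all of its vertices in $\mathsf{V}^{u,k-1}_\rho$, and all that must be produced is a vertex-disjoint family joining, in the prescribed way, the face $F^{k-1}_{m,{\bf v}}(z)$ to the face $F^{k-1}_{m,{\bf w}}(z)$ with lengths at most $4\cdot 17^{-1}L_kL_{k-1}^{-1}$. The plan is to set up coordinates by translating so that $z$ is the origin and rescaling $2L_{k-1}\Z^d$ to $\Z^d$, so that $B_\infty(z,17^{-1}L_k)\cap 2L_{k-1}\Z^d$ becomes the cube $\Lambda:=\{-(M-1)/2,\dots,(M-1)/2\}^d$, where $M:=17^{-1}L_kL_{k-1}^{-1}=2k^2L_{k-1}\lceil L_{k-1}^{\alpha+\beta}\rceil+1$ is an odd integer --- this parity being exactly the statement that $L_k$ is divisible by $L_{k-1}$ but not by $2L_{k-1}$. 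Using that $M$ is odd, the map $y\mapsto y-L_{k-1}{\bf v}$ identifies $F^{k-1}_{m,{\bf v}}(z)$ bijectively with the face $\{a\in\Lambda:\langle a,{\bf v}\rangle=(M-1)/2\}$ of $\Lambda$, and similarly $z_n\mapsto z_n+L_{k-1}{\bf w}$ identifies the ${\bf w}$-face of $\Lambda$ with $F^{k-1}_{m,{\bf w}}(z)$; so it suffices to produce vertex-disjoint nearest-neighbour paths in $\Lambda$, one starting at each point of the ${\bf v}$-face, of length at most $2M$, ending on the ${\bf w}$-face. Finally, applying a permutation and reflections of the coordinates of $\Z^d$, all of which preserve $\Lambda$ and the relevant sets, I may assume ${\bf v}={\bf e}_1$ and either ${\bf w}=-{\bf e}_1$ or ${\bf w}={\bf e}_2$.

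In the case ${\bf w}=-{\bf e}_1$, I would route each point $((M-1)/2,a_2,\dots,a_d)$ of the ${\bf e}_1$-face straight across by decreasing its first coordinate to $-(M-1)/2$; these paths have length $M-1\le 4M$, lie in $\Lambda$, and are pairwise disjoint since they lie on distinct lines parallel to ${\bf e}_1$. In the case ${\bf w}={\bf e}_2$, I would route $a=((M-1)/2,a_2,a_3,\dots,a_d)$ along the ``L-shaped'' path that first decreases its first coordinate from $(M-1)/2$ down to $a_2$ and then increases its second coordinate from $a_2$ up to $(M-1)/2$, keeping coordinates $3,\dots,d$ fixed. Since $-(M-1)/2\le a_2\le (M-1)/2$ this path stays in $\Lambda$, it is simple because its two straight segments meet only at the corner $(a_2,a_2,a_3,\dots,a_d)$, it ends on the ${\bf e}_2$-face, and it has length $2\bigl((M-1)/2-a_2\bigr)\le 2(M-1)\le 4M$.

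The point that needs care is the vertex-disjointness of the L-shaped family, which I would check case by case. Two such paths with distinct values of $(a_3,\dots,a_d)$ lie in disjoint coordinate subspaces. For two paths sharing $(a_3,\dots,a_d)$ but with second-coordinate parameters $a_2<a_2'$: their ``horizontal'' segments have distinct fixed second coordinates $a_2\ne a_2'$; their ``vertical'' segments have distinct fixed first coordinates $a_2\ne a_2'$; and a horizontal segment of one and a vertical segment of the other can intersect only at a point forcing $a_2'\le a_2$, which is excluded. Hence the family is vertex-disjoint, its endpoints are distinct points of the ${\bf w}$-face, and transporting everything back through the identifications above and the shift $z_n\mapsto z_n+L_{k-1}{\bf w}$ yields exactly the asserted collection $\mathcal{T}({\bf v},{\bf w},z,k-1)$. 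I do not expect a genuine obstacle here: the argument is elementary grid routing, and the only bookkeeping that requires attention is the $L_{k-1}{\bf v}$-shift relating the ``face points'' in $F^{k-1}$ to the outermost layer of lattice points of the box, together with the disjointness check just outlined.
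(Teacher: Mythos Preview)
Your proposal is correct and follows essentially the same approach as the paper: the paper also reduces (without loss of generality) to $z=0$, ${\bf v}={\bf e}_1$, ${\bf w}={\bf e}_2$ or ${\bf w}=-{\bf e}_1$, routes the opposite-face case by straight lines, and in the adjacent-face case sends each starting point along ${\bf e}_1$ to the diagonal hyperplane $\{y_1=y_2\}$ and then along ${\bf e}_2$ to the target face. The only difference is that you spell out the vertex-disjointness check for the L-shaped family explicitly, whereas the paper appeals to a figure and ``elementary geometric considerations''.
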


\begin{proof}
  Since $B_\infty(z,17^{-1}L_k)\cap 2L_{k-1}\Z^d \subset \mathsf{V}^{u,k-1}_\rho$, we need only to construct this collection as a bundle of non-intersecting paths matching the vertices of the associated faces in orderly fashion, as shown in Figure~\ref{f:B17flow}.

  If~${\bf v}=-{\bf w}$, that is, if the faces are opposite to one another, we simply take $\mathcal{T}({\bf v},{\bf w},z,k-1)$ to be the collection of discrete straight lines in~$2L_{k-1} \Z^d$ parallel to~${\bf w}$ that bring each point~$z_0$ in~$F^{k-1}_{m,{\bf v}}(z)- L_{k-1}{\bf v}$ to $z_0+((2/17)L_k- L_{k-1}){\bf v}$ in~$F^{k-1}_{m,{\bf w}}(z)- L_{k-1}{\bf w}$.

  If not, without loss of generality we assume that~$z=0$, ${\bf v}={\bf e}_1$, and~${\bf w}={\bf e}_2$. Then, for
	\begin{equation}
	\begin{split}
	x_0 = ( 17^{-1}L_k ){\bf e}_1   +a_2 {\bf e}_2 +\dots + a_ d{\bf e}_d  \in F^{k-1}_{m,{\bf v}}(z),
	\end{split}
	\end{equation}
	\begin{figure}[ht]
		\centering
		\includegraphics[scale = .6]{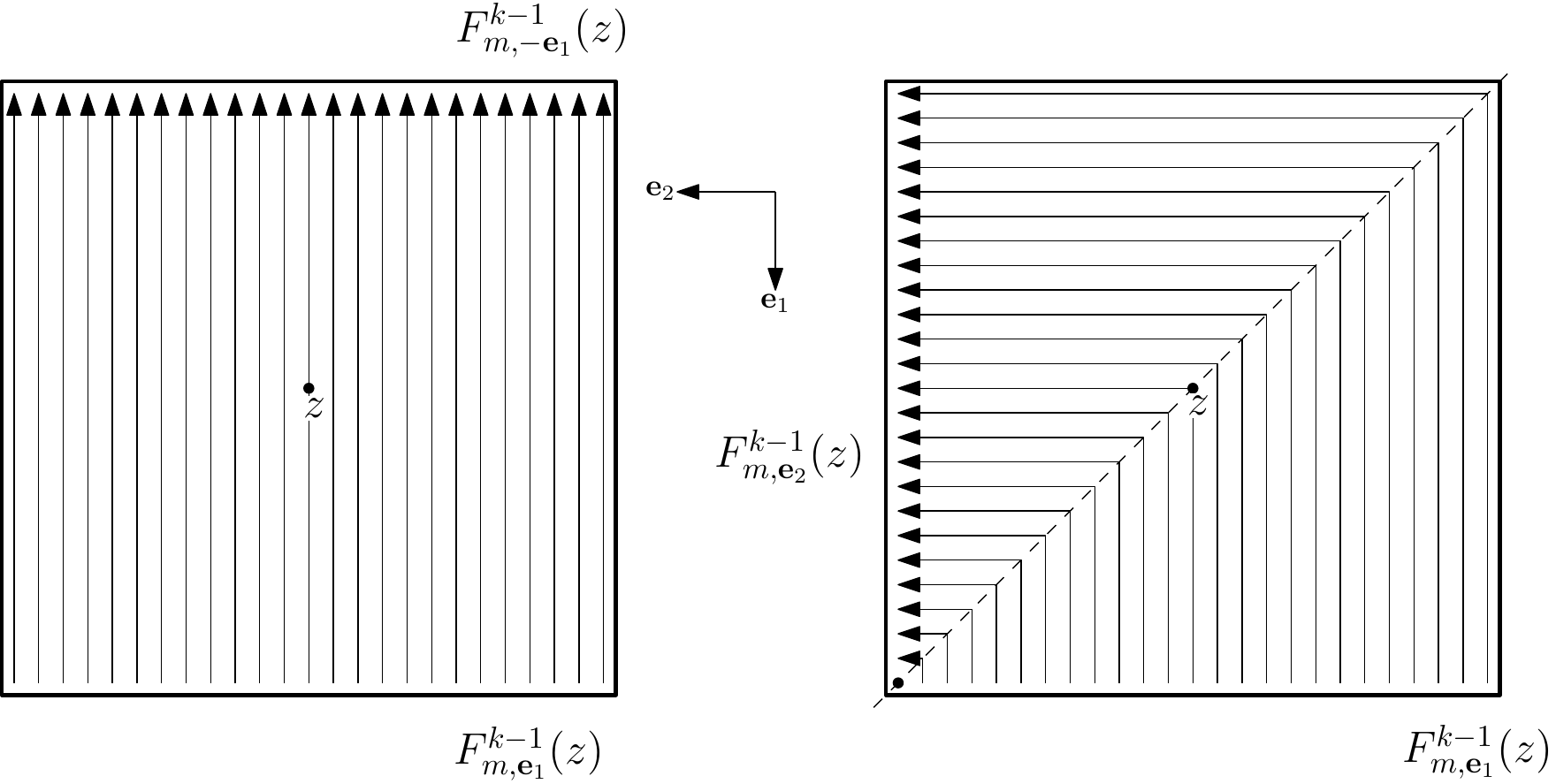}
		\vspace{0.1cm}
		\caption{A two-dimensional representation of the collection of paths constructed in Lemma~\ref{l:B17kminus1path}. }
		\label{f:B17flow}
	\end{figure}
	we consider in~$ \mathcal{T}({\bf e}_1,{\bf e}_2,z,k-1)$ the path of vertices of~$2L_{k-1}\Z^d$ which starts at $x_0 - L_{k-1} {\bf e}_1$, goes to the discrete hyperplane
	\[
	\left\{ (y_1,\dots,y_d)\in  2L_{k-1}\Z^d;\, y_1 = y_2           \right\}
	\]
	as a discrete straight line in~$2L_{k-1}\Z^d$ parallel to~${\bf e}_1$, reaching the point
	\[
	x' = a_2{\bf e}_1 +a_2 {\bf e}_2 +\dots + a_ d{\bf e}_d ,
	\]
	and then goes to
	\[
	x'' = a_2{\bf e}_1 + (17^{-1}L_k -L_{k-1}) {\bf e}_2 +\dots + a_ d{\bf e}_d  \in F^{k-1}_{m,{\bf e}_2}(z),
	\]
	as a discrete straight line in~$2L_{k-1}\Z^d$ parallel to~$-{\bf e}_2$. If~$x_0 - L_{k-1} {\bf e}_1=x'=x''$, we take the path to be simply comprised of one point~$\{ x' \}$. This collection of paths satisfies the required properties by elementary geometric considerations. We refer to Figure~\ref{f:B17flow}.
\end{proof}

\begin{figure}[ht]
  \centering
  \includegraphics[scale = .6]{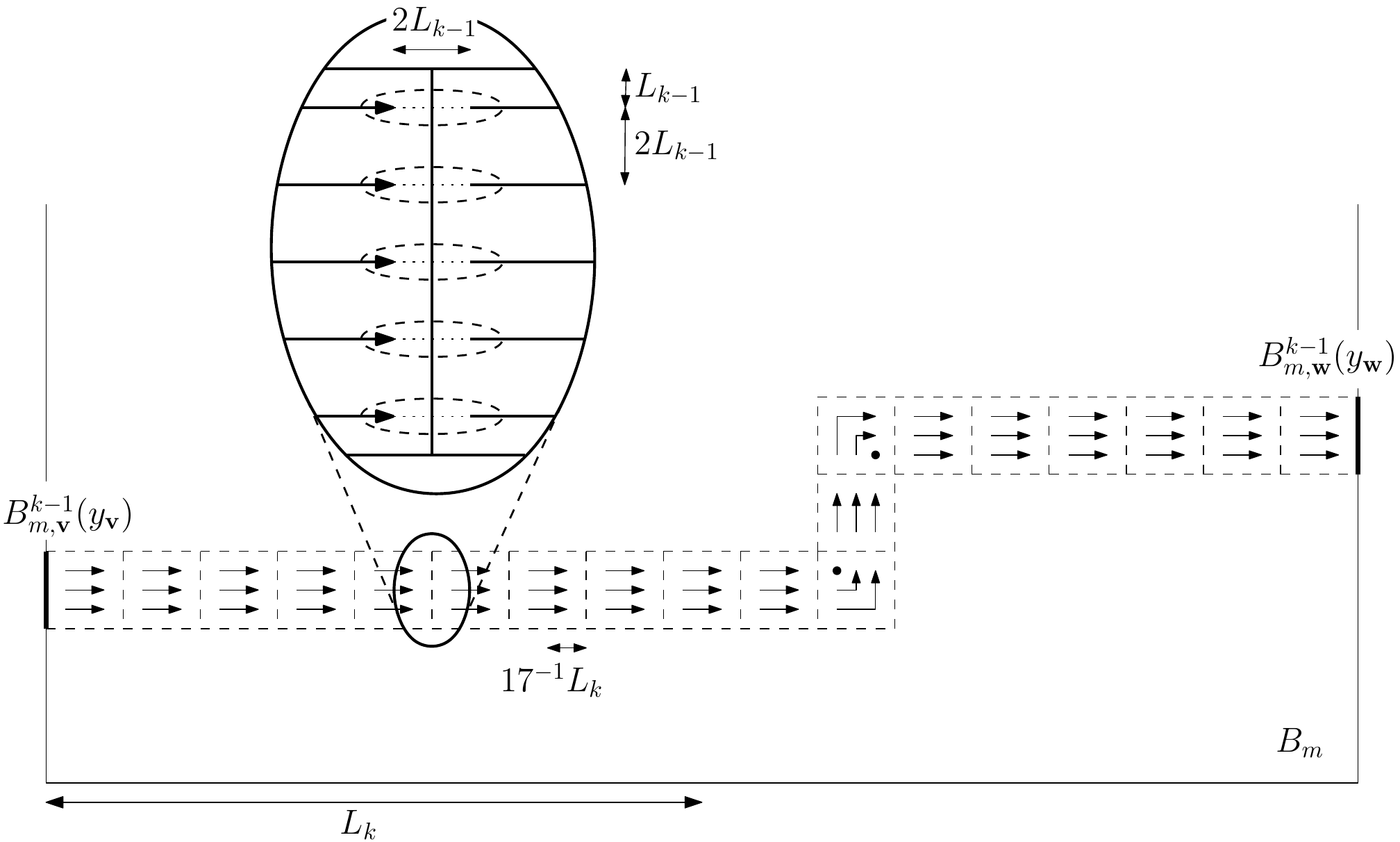}
  \vspace{0.1cm}
  \caption{A two-dimensional representation of the collection of paths constructed in Lemma~\ref{l:kminus1paths}. }
  \label{f:kminus1paths}
\end{figure}
We can now finally state and prove the main result of this section, which shows the existence of a collection of efficient paths in~$\mathsf{V}^{u,k-1}_\rho$ joining subsets of good faces of a good box of the $k$-th scale.

\begin{lemma}
  \label{l:kminus1paths}
  Consider~$m\in\mathbb{M}_{k}$, with~$k\in\N$, and ${\bf v},{\bf w} \in \mathsf{U}$, ${\bf v} \neq {\bf w}$.
  Assume the occurrence of the event where both~$\mathsf{F}_{m,{\bf v}}$ and~$\mathsf{F}_{m,{\bf w}}$ are good.
  Then, given~$y_{\bf v}\in\mathsf{G}_{m,{\bf v}}$ and~$y_{\bf w}\in\mathsf{G}_{m,{\bf w}}$, there exists a collection~$\mathcal{T}(y_{\bf v},y_{\bf w},k-1)$ of vertex-disjoint nearest-neighbor paths of~$B_m^{k-1}$ such that for every~$x_0\in B_{m,{\bf v}}^{k-1}(y_{\bf v})$ there exists $(z_0,z_1,\dots,z_n)\in \mathcal{T}(y_{\bf v},y_{\bf w},k-1)$ such that~$n\leq 4\cdot17^{d-1}L_k \cdot L_{k-1}^{-1}$, $x_0=z_0 + L_{k-1}{\bf v}$, $z_n + L_{k-1}{\bf w}$ is in~$B_{m,{\bf w}}^{k-1}(y_{\bf w})$, and $z_0,z_1,\dots,z_n\in\mathsf{V}^{u,k-1}_\rho$.
\end{lemma}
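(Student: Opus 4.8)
The plan is to glue together the coarse routing of Lemma~\ref{l:17path} with the local routing of Lemma~\ref{l:B17kminus1path}, using one application of the latter inside each box of the former. First I would apply Lemma~\ref{l:17path} to get a simple nearest-neighbor path $z_1,\dots,z_N$ in $\mathcal{B}_m$ with $N\le 17^d$ such that every scale-$(k-1)$ box meeting $B_\infty(z_i,17^{-1}L_k)$ is $(u,\rho,k-1)$-good (so that $B_\infty(z_i,17^{-1}L_k)\cap 2L_{k-1}\Z^d\subset\mathsf{V}^{u,k-1}_\rho$), and with $B_{m,{\bf v}}(y_{\bf v})$ (resp.\ $B_{m,{\bf w}}(y_{\bf w})$) equal to the ${\bf v}$-face of $B_\infty(z_1,17^{-1}L_k)$ (resp.\ the ${\bf w}$-face of $B_\infty(z_N,17^{-1}L_k)$): both are $(d-1)$-dimensional $\ell_\infty$-boxes of radius $17^{-1}L_k$ with the same center, by the observation preceding Lemma~\ref{l:17path}, so in particular $B_{m,{\bf v}}^{k-1}(y_{\bf v})=F^{k-1}_{m,{\bf v}}(z_1)$ and $B_{m,{\bf w}}^{k-1}(y_{\bf w})=F^{k-1}_{m,{\bf w}}(z_N)$. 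For $1\le i\le N-1$ let ${\bf v}_i\in\mathsf{U}$ be the direction of the step $z_i\to z_{i+1}$; declare the \emph{entry} direction of $z_1$ to be ${\bf v}$ and of $z_i$ ($i\ge2$) to be $-{\bf v}_{i-1}$, and the \emph{exit} direction of $z_i$ ($i\le N-1$) to be ${\bf v}_i$ and of $z_N$ to be ${\bf w}$. Since $z_1$ (resp.\ $z_N$) is extremal in $\mathcal{B}_m$ along ${\bf v}$ (resp.\ ${\bf w}$) and the coarse path is simple, at each $z_i$ the entry and exit directions are two distinct elements of $\mathsf{U}$, so Lemma~\ref{l:B17kminus1path} applies inside every box $B_\infty(z_i,17^{-1}L_k)$ with these two directions.

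Running Lemma~\ref{l:B17kminus1path} in each $B_\infty(z_i,17^{-1}L_k)$ yields a vertex-disjoint family of good scale-$(k-1)$ paths from the port set $F^{k-1}_{m,\mathrm{entry}}(z_i)$ to the port set $F^{k-1}_{m,\mathrm{exit}}(z_i)$, realizing an injective matching between the two port sets. The key compatibility is that the exit port set of $z_i$ coincides with the entry port set of $z_{i+1}$: both are the points of $2L_{k-1}\Z^d\pm L_{k-1}{\bf v}_i$ on the common hyperface of $B_\infty(z_i,17^{-1}L_k)$ and $B_\infty(z_{i+1},17^{-1}L_k)$, the two offsets $\pm L_{k-1}{\bf v}_i$ giving the same residue class mod $2L_{k-1}\Z^d$. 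Moreover, if a $z_i$-path ends at the box $a$ with $a+L_{k-1}{\bf v}_i$ equal to the exit port $p$, then the $z_{i+1}$-path starting from the entry port $p$ begins at the box $a+2L_{k-1}{\bf v}_i$, a nearest neighbor of $a$ in $2L_{k-1}\Z^d$; so the two pieces concatenate into a legitimate nearest-neighbor path. Chaining through $z_1,\dots,z_N$, each initial port $x_0\in B_{m,{\bf v}}^{k-1}(y_{\bf v})=F^{k-1}_{m,{\bf v}}(z_1)$ then determines one path in $B_m^{k-1}$ starting at $z_0:=x_0-L_{k-1}{\bf v}$, ending at a box $z_n$ with $z_n+L_{k-1}{\bf w}\in F^{k-1}_{m,{\bf w}}(z_N)=B_{m,{\bf w}}^{k-1}(y_{\bf w})$, and staying in $\mathsf{V}^{u,k-1}_\rho$ (each constituent piece does, by Lemma~\ref{l:B17kminus1path} and the goodness of the coarse boxes). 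Collecting these paths defines $\mathcal{T}(y_{\bf v},y_{\bf w},k-1)$.

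For the length bound, the path meets at most $17^d$ coarse boxes, contributes at most $4\cdot 17^{-1}L_kL_{k-1}^{-1}$ edges per box by Lemma~\ref{l:B17kminus1path}, and uses one connecting edge per coarse step; summing, this is at most $4\cdot 17^{d-1}L_kL_{k-1}^{-1}$ for $L_0$ large, since $L_k/L_{k-1}$ dwarfs $17^d$. For vertex-disjointness I would argue stage by stage: two distinct initial ports enter $z_1$ at distinct ports, hence pick disjoint pieces inside $z_1$ that exit at distinct ports of $z_1$ (the matching being injective), hence enter $z_2$ at distinct ports, and so on, so the concatenated paths never collide inside a common coarse box; and for $i\ne j$ one has $|z_i-z_j|_\infty\ge 2\cdot 17^{-1}L_k$, while any $2L_{k-1}\Z^d$-point common to $B_\infty(z_i,17^{-1}L_k)$ and $B_\infty(z_j,17^{-1}L_k)$ would lie within $\ell_\infty$-distance $17^{-1}L_k-L_{k-1}$ of each center (because $17^{-1}L_k$ is an odd multiple of $L_{k-1}$, by the construction of the scales in~\eqref{eq:Lkdef}), contradicting the triangle inequality, so the box sets of distinct coarse boxes are disjoint. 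I expect the only genuine work to be exactly this bookkeeping — aligning the port sets of consecutive coarse boxes, checking that the turns of the coarse path never force the entry and exit directions to coincide, and propagating disjointness through the chaining — since all the real geometric content is already packaged in Lemmas~\ref{l:17path} and~\ref{l:B17kminus1path}.
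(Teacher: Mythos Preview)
Your proposal is correct and follows essentially the same approach as the paper: apply Lemma~\ref{l:17path} for the coarse route through $\mathcal{B}_m$, then Lemma~\ref{l:B17kminus1path} inside each coarse box, and concatenate at shared faces using that the exit port set of one box coincides with the entry port set of the next. Your treatment is in fact more careful than the paper's, which leaves the vertex-disjointness verification and the port-compatibility bookkeeping (including the parity observation that $17^{-1}L_k/L_{k-1}$ is odd) implicit.
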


\begin{proof}
  Lemma~\ref{l:17path} implies the existence of a nearest neighbor path~$(z_1,\dots,z_l)$ of points in~$\mathcal{B}_m$ whose associated boxes of radius~$17^{-1}L_k$ share faces, such that the length~$l$ is smaller than~$17^d$ and such that $B_{m,{\bf v}}^{k-1}(y_{\bf v})$ and~$B_{m,{\bf w}}^{k-1}(y_{\bf w})$ are faces of
  \[
    B_\infty(z_1,17^{-1}L_k)\cap(2L_{k-1}\Z^d+L_{k-1}{\bf v})\text{  and  }B_\infty(z_l,17^{-1}L_k)\cap(2L_{k-1}\Z^d+L_{k-1}{\bf w})
  \]
  respectively. Lemma~\ref{l:B17kminus1path} provides a construction of paths between vertices of consecutive faces in this path of boxes. In order to construct~$\mathcal{T}(y_{\bf v},y_{\bf w},k-1)$ one connects the paths in these collections, connecting the end of one path in one of the boxes to the nearest starting point of a path in the next box. Here we note that adjacent boxes share a face and a set of the form $\mathsf{G}_{m', {\bf v}'}$ defined in~\ref{eq:intbmejdef}.
  Take e.g.\ $z_{i-1},z_{i},z_{i+1},z_{i+2}$ consecutive points in the nearest neighbor path of~$\mathcal{B}_m$. Given the unit vectors
  \[
    {\bf v}'=\frac{z_{i}-z_{i-1}}{|z_{i}-z_{i-1}|},\quad{\bf v}''=\frac{z_{i+1}-z_{i}}{|z_{i+1}-z_{i}|},  \quad\text{ and }\quad {\bf v}'''=\frac{z_{i+2}-z_{i+1}}{|z_{i+2}-z_{i+1}|}
  \]
  we connect the endpoint~$x$ of a path in~$\mathcal{T}({\bf v}',{\bf v}'',z_i,k-1)$ to the starting point $x+2L_{k-1}{\bf v}''$ of a path in~$\mathcal{T}({\bf  v}'',{\bf v}''',z_{i+1},k-1)$. Using the fact that $B_{m,{\bf v}}^{k-1}(y_{\bf v})=F^{k-1}_{m,{\bf v}}(z_1)$ and $B_{m,{\bf w}}^{k-1}(y_{\bf w})=F^{k-1}_{m,{\bf w}}(z_l)$, as well as the bound on the length of paths given by Lemma~\ref{l:B17kminus1path}, we finish the proof of the result.
  We refer to Figure~\ref{f:kminus1paths}.
\end{proof}

\section{Finite energy flows}
\label{s:flow}

In this section we will finally construct the discrete finite energy flows, using the groundwork and notation from the previous sections.
We start with the definition of $k$-fractals, which are hierarchical sets contained on good faces at the $k$-th scale, these sets avoid defects of all previous scales, and from them we will be able to construct finite energy flows in a hierarchical fashion.

\begin{definition}
  \label{def:0fractal}
  Given a $(u,\rho,0)$-good box~$m\in\mathbb{M}_0$ and a unit vector~${\bf v} \in \mathsf{U}$ assume the occurrence of the event where~$\mathsf{F}_{m,{\bf v}}$ is good. Then, given~$y\in\mathsf{G}_{m,{\bf v}}$, we say that~$\mathcal{F}(m,{\bf v},y)\equiv B_{m,{\bf v}}(y) \subset \Z^d$ is a~$0$-\emph{fractal}.
\end{definition}

\begin{definition}
	\label{def:kfractal}
	Given a $(u,\rho,k)$-good box~$m\in\mathbb{M}_k$, with~$k\in\N$, and a unit vector~${\bf v} \in \mathsf{U}$, assume the occurrence of the event where~$\mathsf{F}_{m,{\bf v}}$ is good. Then, given~$y\in\mathsf{G}_{m,{\bf v}}$, we say that~$\mathcal{F}(m,{\bf v},y)\subset\Z^d$ is a~$k$-\emph{fractal} if
	\begin{itemize}
		\item $\mathcal{F}(m,{\bf v},y)$ is contained in $B_{m,{\bf v}}(y)$;
		\item for every~$m'\in\mathbb{M}_{k-1}$ such that~$B_{m'}\subset B_m$ and~$\mathsf{F}_{m',{\bf v}}\subset B_{m,{\bf v}}(y)$, there exists~$y_{m'}\in \mathsf{G}_{m',{\bf v}}$ so that~$\mathcal{F}(m,{\bf v},y)$ is the union of the~$(k-1)$-fractals associated to such points~$y_{m'}$ and the unit vector~${\bf v}$, that is,
		\[
		\mathcal{F}(m,{\bf v},y) = \bigcup_{ \substack{m' \in \mathbb{M}_{k-1} ,\, B_{m'}\subset B_m\\ \mathsf{F}_{m',{\bf v}}\subset B_{m,{\bf v}}(y) }} \mathcal{F}(m',{\bf v},y_{m'}).
		\]
	\end{itemize}
	We note that for every~$m'\in\mathbb{M}_{k-1}$ such that~$\mathsf{F}_{m',{\bf v}}\subset B_{m,{\bf v}}(y)$ we have that~$\mathsf{F}_{m',{\bf v}}$ is good, by definition of the set~$\mathsf{G}_{m,{\bf v}}$.
\end{definition}

The next result shows that it is possible to construct a flow between sources and sinks supported on the~$k$-fractals of distinct good faces of a good box of the~$k$-th scale such that the flow's energy decays as a polynomial of~$L_k$.

\begin{proposition}
\label{p:kflowenergy}
Let~$L_0 > \clzero$, and
consider~$k\in\N$ and two $k$-fractals~$\mathcal{F}(m,{\bf v},y_{{\bf v}})$ and $\mathcal{F}(m,{\bf w},y_{{\bf w}})$ associated to the faces of a good box~$m$ of the $k$-th scale, these faces being in turn associated to two vectors ${\bf v}, {\bf w} \in \mathsf{U}$.
There exists a discrete flow~$\theta_{{\bf v},{\bf w}}^{m}$ on the edges of~$\mathsf{V}^u_\rho$ such that, for any~$J\in(0,1)$,
\begin{itemize}
	\item[(i)] $\displaystyle \mathrm{div}( \theta_{{\bf v},{\bf w}}^{m}  ) = \frac{1}{\left| \mathcal{F}(m,{\bf v},y_{{\bf v}}) \right|}{\bf 1}_{ \mathcal{F}(m,{\bf v},y_{{\bf v}}) }
	 -
	\frac{1}{\left| \mathcal{F}(m,{\bf w},y_{{\bf w}}) \right|}{\bf 1}_{ \mathcal{F}(m,{\bf w},y_{{\bf w}}) }$;
	\item[(ii)] $\displaystyle \theta_{{\bf v},{\bf w}}^{m}(e) \neq 0$ only when at least one of the endpoints of the edge~$e$ belongs to~$\mathrm{int}(B_m)$;
	\item[(ii)] $\displaystyle \mathrm{Energy}(\theta_{{\bf v},{\bf w}}^{m} )= \sum_{ \substack{ e \text{ an edge} \\ \text{from }\mathsf{V}^u_\rho }}\theta_{{\bf v},{\bf w}}^{m}(e)^2 \leq \cdot L_0^{3d}\cdot  L_k^{-J}$ .
\end{itemize}
\end{proposition}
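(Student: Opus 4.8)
The plan is to construct the flow $\theta_{\bf v,\bf w}^m$ by induction on the scale $k$, using the hierarchical collections of efficient paths produced in Lemma~\ref{l:kminus1paths} as the skeleton. For the base case $k=0$, one uses Lemma~\ref{l:0path}: between any two good faces one has a path of vacant vertices of length at most $(2L_0+1)^d$ inside $B_m$. Spreading a unit of flow uniformly over the (at most $c L_0^{7(d-1)/10}$) points of $B_{m,\bf v}(y_{\bf v})$, routing each unit along such a path to a point of $B_{m,\bf w}(y_{\bf w})$, and superimposing, one gets a flow whose value on any edge is at most $O(1)$ (a constant number of paths can use the same edge, since we may route the paths to be essentially disjoint or at least of bounded multiplicity by the isoperimetric argument) times the number of sources, so the energy is bounded by a constant depending only on $d$ and $L_0$; since $L_0$ is fixed this is absorbed into the $L_0^{3d}$ prefactor, and $L_0^{-J}\le 1$, so the bound holds trivially for $k=0$ after possibly enlarging $\clzero$.

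For the inductive step, assume the statement at scale $k-1$. By Definition~\ref{def:kfractal}, $\mathcal{F}(m,{\bf v},y_{\bf v})$ is the disjoint union of $(k-1)$-fractals $\mathcal{F}(m',{\bf v},y_{m'})$ over the sub-boxes $m'$ of the $(k-1)$-th scale whose $\bf v$-face lies in $B_{m,\bf v}(y_{\bf v})$, and similarly for $\bf w$. Lemma~\ref{l:kminus1paths} gives a collection $\mathcal{T}(y_{\bf v},y_{\bf w},k-1)$ of \emph{vertex-disjoint} paths in $\mathsf{V}^{u,k-1}_\rho$ of length at most $n\le 4\cdot 17^{d-1}L_k/L_{k-1}$, matching each vertex of $B_{m,\bf v}^{k-1}(y_{\bf v})$ to a vertex of $B_{m,\bf w}^{k-1}(y_{\bf w})$. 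Each such path is a sequence of good $(k-1)$-boxes $m_0,m_1,\dots,m_n$, consecutive ones sharing a good face; inside box $m_j$ we glue the $(k-1)$-scale flow $\theta^{m_j}_{{\bf u}_j,{\bf u}_{j+1}}$ furnished by the inductive hypothesis, where ${\bf u}_j,{\bf u}_{j+1}$ are the directions along which the path enters and leaves $m_j$. Concatenating along the path and then summing over all paths in $\mathcal{T}$, with each path carrying total flow $1/|\mathcal{F}(m,{\bf v},y_{\bf v})|$, produces a flow with the prescribed divergence (property (i)) supported on edges touching $\mathrm{int}(B_m)$ (property (ii)): the per-box fractal-normalized divergences telescope along each path, and the uniform weights on sources/sinks match the fractal structure. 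For the energy (property (iii)): by vertex-disjointness of the paths in $\mathcal{T}$, each edge of $\mathsf{V}^{u,k-1}_\rho$ lies in at most a bounded number of the concatenated paths (in fact one, up to the fixed overlaps at the gluing faces), so on an edge inside a box $m_j$ the flow equals a single rescaled copy of some $\theta^{m_j}_{{\bf u},{\bf u}'}(e)$ times $1/|\mathcal{F}(m,{\bf v},y_{\bf v})|$. Hence
\begin{equation}
\mathrm{Energy}(\theta^m_{\bf v,\bf w})
\;\le\;
c\,\frac{1}{|\mathcal{F}(m,{\bf v},y_{\bf v})|^2}
\sum_{\text{paths}}\ \sum_{j\le n}\mathrm{Energy}\big(\theta^{m_j}_{{\bf u}_j,{\bf u}_{j+1}}\big)
\;\le\;
c\,\frac{(\#\text{paths})\,(n+1)}{|\mathcal{F}(m,{\bf v},y_{\bf v})|^2}\,L_0^{3d} L_{k-1}^{-J}.
\end{equation}
Now $\#\text{paths}=|B_{m,\bf v}^{k-1}(y_{\bf v})|\asymp (L_k/L_{k-1})^{d-1}$ and $|\mathcal{F}(m,{\bf v},y_{\bf v})|\asymp$ that same quantity times the fractal density factors, which one tracks inductively; the key point is $|\mathcal{F}(m,{\bf v},y_{\bf v})|^{-2}\cdot\#\text{paths}\le c\,(L_{k-1}/L_k)^{d-1}$, and $n+1\le c\,L_k/L_{k-1}$, giving a net factor $c\,(L_{k-1}/L_k)^{d-2}\le c$ (since $d\ge 3$) times $L_{k-1}^{-J}$. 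Finally one must pass from $L_{k-1}^{-J}$ to $L_k^{-J}$: using $L_k\ge c L_{k-1}^{\alpha+\beta+2}$ from~\eqref{eq:Lkdef}, we have $L_k^{-J}\ge c L_{k-1}^{-J(\alpha+\beta+2)}$, so it suffices that the accumulated polynomial losses satisfy $c\, L_{k-1}^{O(1)}\cdot L_{k-1}^{-J}\le L_{k-1}^{-J(\alpha+\beta+2)}\cdot(\text{something}\ge 1)$ — which forces $J$ to be chosen \emph{small enough} (depending on $d$, $\alpha$, $\beta$) and $L_0>\clzero$ large enough, after possibly enlarging $\clzero$ as flagged at~\eqref{eq:flow0_1}, so that the prefactor $L_0^{3d}$ is not consumed and the exponent bookkeeping closes.

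The main obstacle is the energy bookkeeping in the inductive step: one must verify that the number of paths (which grows like $(L_k/L_{k-1})^{d-1}$) together with the normalization $|\mathcal{F}(m,{\bf v},y_{\bf v})|^{-2}$ and the path-length factor $n\le cL_k/L_{k-1}$ combine, thanks to $d\ge 3$, into something that does \emph{not} grow and leaves room to absorb the change of scale $L_{k-1}^{-J}\to L_k^{-J}$ — this is exactly why $J$ must be taken small and why it is essential that distinct paths in $\mathcal{T}$ are vertex-disjoint so that edge multiplicities stay bounded. A secondary technical point is keeping careful track of the cardinalities $|\mathcal{F}(m,{\bf v},y)|$ across scales (they are products over scales of the "good fraction" on each face), and checking that the flow $\theta^m_{\bf v,\bf w}$ remains supported on edges of $\mathsf{V}^u_\rho$ — which holds because at the bottom scale the paths from Lemma~\ref{l:0path} live in $\mathsf{V}^u_\rho\cap B_m$ and all gluings happen across good faces where, by~\eqref{eq:mathsfGnonempty} and the definition of $\mathsf{G}_{m',{\bf v}'}$, the matching prisms are vacant.
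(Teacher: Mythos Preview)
Your overall inductive strategy matches the paper's proof: induction on $k$, Lemma~\ref{l:0path} for the base case, and concatenation of $(k-1)$-scale flows along the vertex-disjoint paths from Lemma~\ref{l:kminus1paths} for the step. However, two points in the bookkeeping need fixing.

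First, the rescaling factor in the inductive step is not $1/|\mathcal{F}(m,{\bf v},y_{\bf v})|$ but $|\mathcal{F}_{k-1}|/|\mathcal{F}_k|$, where $|\mathcal{F}_{k-1}|$ denotes the common cardinality of a $(k-1)$-fractal. The inductive flows $\theta^{m_j}$ already have divergence $\pm 1/|\mathcal{F}_{k-1}|$ on their source and sink, so to get divergence $\pm 1/|\mathcal{F}_k|$ on the $k$-fractal you multiply each concatenated path-flow by $|\mathcal{F}_{k-1}|/|\mathcal{F}_k|\asymp (L_{k-1}/L_k)^{d-1}$. The energy then picks up a factor $(L_{k-1}/L_k)^{2(d-1)}$, not $1/|\mathcal{F}_k|^2$. (You still land on the correct net factor $(L_{k-1}/L_k)^{d-2}$, but the intermediate display is wrong.)

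Second, and this is the real gap: after correctly reaching the factor $c\,(L_{k-1}/L_k)^{d-2}$, you discard it by bounding it by a constant and then cannot pass from $L_{k-1}^{-J}$ to $L_k^{-J}$, concluding that $J$ must be taken small. This contradicts the statement, which asserts the bound for \emph{every} $J\in(0,1)$. The correct move (and the paper's) is to keep the factor: one needs
\[
c\,\Big(\frac{L_{k-1}}{L_k}\Big)^{d-2}\,L_{k-1}^{-J}\le L_k^{-J},
\qquad\text{equivalently}\qquad
c\,\Big(\frac{L_{k-1}}{L_k}\Big)^{d-2-J}\le 1,
\]
and since $d\ge 3$ and $J<1$ give $d-2-J>0$, this holds for $L_0>\clzero$ large enough, uniformly in $k$. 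No smallness of $J$ is required; the gain $(L_{k-1}/L_k)^{d-2}$ is exactly what absorbs the change of scale.
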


\begin{proof}
	We prove the result by induction in~$k$. Assume first that~$k=0$, and consider a bijection~$\varphi^0$ between the vertices of~$B_{m,{\bf v}}(y_{\bf v})$ and~$B_{m,{\bf w}}(y_{\bf w})$. From Lemma~\ref{l:0path}, we know the existence of a directed path between~$y_{\bf v}$ and~$y_{\bf w}$ contained in~$\mathsf{V}^u_\rho\cap B_m$ which only intersects the faces of~$B_m$ at~$y_{\bf v}$ and~$y_{\bf w}$. Since
	\[
	\mathsf{P}_{m,{\bf v}}(y_{\bf v})\cup \mathsf{P}_{m,{\bf w}}(y_{\bf w}) \subset \mathsf{V}^u_\rho\cap B_m,
	\]
	there also exists such a discrete vacant path between~$y_{\bf v} - {\bf v}$ and~$y_{\bf w} - {\bf w}$, and therefore one can find a directed path~$\textbf{path}^0(x,\varphi_0(x))$ starting at~$x\in B_{m,{\bf v}}(y_{\bf v})$ and ending at~$\varphi_0(x)\in B_{m,{\bf w}}(y_{\bf w})$ which only intersects the faces of~$B_m$ at~$x$ and~$\varphi_0(x)$. For each such $x$ we construct the flow~$\theta_x$ which associates to each directed edge~$e$ in the nearest-neighbor graph of~$\mathsf{V}^u_\rho$ the value~$1$ if~$e$ is traversed by~$\textbf{path}^0(x,\varphi_0(x))$, $-1$ if~$-e$ is the edge being traversed, or~$0$ otherwise. We then define
	\begin{equation}
	\label{eq:flow0}
	\theta_{{\bf v},{\bf w}}^{m}:=\sum_{ x\in B_{m,{\bf v}}(y_{\bf v}) } \theta_x,
	\end{equation}
	and it is immediate that this flow satisfies item~$(i)$ of the Proposition.
	Since~$J < 1$, $| B_{m,{\bf v}}(y_{\bf v}) | \leq  cL_0^{7(d - 1)/10} $, and the maximal length of a path in~$B_m$ is smaller than~$L_0^d$, we obtain, after possibly increasing $\clzero$ and requiring $L_0 > \clzero$,
	\begin{equation}
	\label{eq:flow0_1}
	\mathrm{Energy}(\theta_{{\bf v},{\bf w}}^{m} ) \leq c \cdot L_0^{7(d - 1)/5}\cdot L_0^d \leq L_0^{3d}\cdot L_0^{-J} ,
	\end{equation}
	and the base case of induction is proved.

	Assume now that we already proved the result for~$k-1\in\N$, and let us prove it for~$k$. We know by Lemma~\ref{l:kminus1paths} that there exists a collection~$\mathcal{T}(y_{\bf v},y_{\bf w},k-1)$ of vertex-disjoint nearest-neighbor paths of~$B_m^{k-1}\cap\mathsf{V}^{u,k-1}_\rho$ of length at most~$4\cdot17^{d-1}L_k \cdot L_{k-1}^{-1}$, such that for each~$x\in B_{m,{\bf v}}^{k-1}(y_{\bf v})$, there exists a point~$\varphi_{k}(x)\in B_{m,{\bf w}}^{k-1}(y_{\bf w})$ and a path
	\[
	(z_1,\dots,z_{n_x})=\textbf{path}^{k}(x,\varphi_{k}(x))\in \mathcal{T}(y_{\bf v},y_{\bf w},k-1)
	\]
	starting at~$x-L_{k-1}{\bf v}=z_1$ and ending at~$\varphi_{k}(x)-L_{k-1}{\bf w}=z_{n_x}$. Since the associated boxes are all~$(u,\rho,k-1)$-good, the faces between the boxes associated to two adjacent vertices in this path must be good. For~$i=1,\dots,{n_x}$, we let~$m_i=(z_i,k-1)$. We know by the definition of the $k$-fractal that there must exist two~$(k-1)$-fractals
	\begin{equation}
	\begin{split}
	\mathcal{F}(z_1,{\bf v},y_{z_1})
	\subset
	F_{ m_1 ,  {\bf v}   } \qquad \text{and} \qquad
        \mathcal{F}(z_{n_x},{\bf w},y_{z_{n_x}})
	\subset
	F_{ m_{n_x} ,  {\bf w}   }
	\end{split}
	\end{equation}
	such that
	\begin{equation}
	\begin{split}
	\mathcal{F}(z_1,{\bf v},y_{z_1})
	\subset
	\mathcal{F}(m,{\bf v},y_{{\bf v}})
	\quad\text{ and }\quad
	\mathcal{F}(z_{n_x},{\bf w},y_{z_{n_x}})
	\subset
	\mathcal{F}(m,{\bf w},y_{{\bf w}}),
	\end{split}
	\end{equation}
	and by the goodness of the boxes~$m_1,\dots,m_{n_x}$, we know the existence of $(k-1)$-fractals
	\begin{equation}
	\begin{split}
	\mathcal{F}(z_i,{\bf v}_i,y_i)
	&\subset
	F_{ m_i ,  {\bf v}_i   }
	\end{split}
	\end{equation}
	contained in each face given by the intersection of two consecutive boxes~$B_{m_i}$ and~$B_{m_{i+1}}$. Using the induction hypothesis, we obtain~${n_x}-1$ flows~$\theta_1^x,\dots,\theta_{{n_x}-1}^x$ between $\mathcal{F}(z_i,{\bf v}_i,y_{i})$ and $\mathcal{F}(z_{i+1},{\bf v}_{i+1},y_{i+1})$, as well as flows~$\theta_0^x$ and~$\theta_{n_x}^x$, the first between~$\mathcal{F}(z_1,{\bf v},y_{z_1})$ and~$\mathcal{F}(z_1,{\bf v}_1,y_1)$, and the latter between~$\mathcal{F}(z_{n_x},{\bf v}_n,y_{{n_x}})$ and~$\mathcal{F}(z_{n_x},{\bf w},y_{z_{n_x}})$, each one these flows satisfying properties (i), (ii) and (iii). Letting then
	\begin{equation}
	\begin{split}
	\theta_x := \frac{\left|\mathcal{F}(z_1,{\bf v},y_{z_1})\right|}{\left| \mathcal{F}(m,{\bf v},y_{{\bf v}}) \right|}\sum_{i=0}^{n_x} \theta_i^x,
	\end{split}
	\end{equation}
	we can define
	\begin{equation}
	\label{eq:thetakdef}
	\begin{split}
	\theta_{{\bf v},{\bf w}}^{m} := \sum_{x\in B_{m,{\bf v}}^{k-1}(y_{\bf v})} \theta_x.
	\end{split}
	\end{equation}
	The flow~$\theta_{{\bf v},{\bf w}}^{m}$ automatically satisfies properties (i) an (ii). To verify property (iii), we first notice that the set of edges which each of the flows in the set $\{\theta^x;\, x\in B_{m,{\bf v}}^{k-1}(y_{\bf v})\}$ traverses are disjoint. Moreover, for a given~$x\in B_{m,{\bf v}}^{k-1}(y_{\bf v})$, the set of edges through which each of the flows in~$\{\theta_i^x;\, i=1,\dots,n\}$ passes is also disjoint. We also note that, by the definition of a $k$-fractal, $k$-fractals have always the same cardinality, and the ratio between the cardinalities of a $(k-1)$-fractal and a $k$-fractal is smaller than $(L_{k-1}/L_k)^{d-1}$. This implies, together with the induction hypothesis and the bound on the size of~$\textbf{path}^{k}(x,\varphi_{k}(x))$,
\begin{equation}
  \label{eq:energykbound}
  \begin{split}
    \mathrm{Energy}(\theta_{{\bf v},{\bf w}}^{m} )
    &\leq
    \sum_{x\in B_{m,{\bf v}}^{k-1}(y_{\bf v})} \mathrm{Energy}(\theta_x )
    \leq
    \left(\frac{L_{k-1}}{L_k}\right)^{2(d-1)} \!\!\!\!\!\!\!\! \sum_{x\in B_{m,{\bf v}}^{k-1}(y_{\bf v})}\sum_{i=0}^{n_x} \mathrm{Energy}(\theta^x_i )
    \\
    &\leq
    c
    \left(\frac{L_{k-1}}{L_k}\right)^{2(d-1)} 	\left(\frac{L_{k}}{L_{k-1}}\right)^{d-1} 		\left(\frac{L_{k}}{L_{k-1}}\right)  L_0^{3d} L_{k-1}^{-J}
    \leq
    c L_0^{3d} L_{k-1}^{-J}
    \left(\frac{L_{k-1}}{L_k}\right)^{d-2} 	.
  \end{split}
\end{equation}
This in turn implies, again after possible increasing $\clzero$,
\begin{equation}
  \label{eq:energykbound2}
  \begin{split}
    \mathrm{Energy}(\theta_{{\bf v},{\bf w}}^{m} ) L_0^{-3d} L_{k}^{J}
    \leq
    c \left(\frac{L_{k-1}}{L_k}\right)^{d-2-J} 	< 1,
  \end{split}
\end{equation}
finishing the proof of the induction, and, consequently, of the result.
\end{proof}

Finally, we use the above result in order to show the existence, with high probability for sufficiently large~$L_0$, of a flow of finite energy in~$\mathsf{V}^u_\rho$ from the origin to infinity. We define~$A_0^{u,\rho}$ to be the event where the discretized box of the $0$-th scale~$B_{(0,0)}\cap\Z^d$ containing the origin is contained in~$\mathsf{V}^u_\rho$. We recall the definition of~$u_k$ and~$\rho_k$ in~\eqref{eq:ukdef}. For~$k\geq 1$, we define~$A_k$ as the event where every box of the $(k-1)$-th scale contained in~$B_{(0,k)}$ is $(u_{k-1},\rho_{k-1},k-1)$-good. We also write
\[
\bar A:= A_0^{u,\rho}\cap \bigcap_{k=1}^\infty A_k.
\]
On the event~$\bar A$ we will construct the aforementioned flow in~$\mathsf{V}^{u}_{1}$.

The next lemma shows that this event has probability close to~$1$ for sufficiently large~$L_0$
and small~$u$.

\begin{lemma}
\label{l:barAprob}
With the notation introduced above, we have, for $L_0 > \clzero$ and $u \leq \tilde{u}$ defined in \eqref{eq:u0def},
\begin{equation}
\label{eq::barAprob}
\begin{split}
\IP\left(     \bar A^C \right) \leq \left(1-\exp\left\{  -cuL_0^{d-1}        \right\}\right) + c \exp\left\{  -(\log L_0)^{\delta/2}         \right\},
\end{split}
\end{equation}
and note that, by choosing $L_0$ sufficiently large, and then choosing~$u$ sufficiently small, we can make the above right hand side as small as we want.
\end{lemma}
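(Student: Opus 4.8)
The plan is to bound $\IP(\bar A^C)$ by a union bound over the events defining $\bar A$. Since $\bar A = A_0^{u,\rho} \cap \bigcap_{k \ge 1} A_k$,
\begin{equation}
  \IP(\bar A^C) \le \IP\big((A_0^{u,\rho})^C\big) + \sum_{k \ge 1} \IP(A_k^C),
\end{equation}
and it suffices to control the two contributions separately. For the first term I would first record the deterministic inclusion: if no cylinder of radius $\rho \le \tilde\rho = 2$ intersects the slightly enlarged box $B_\infty(0, L_0+1)$, then every segment ${\bf line}(x, x\pm{\bf e}_j)$ with $x \in B_{(0,0)}\cap\Z^d$ avoids $\mathcal{C}^u_\rho$, i.e.\ $B_{(0,0)}\cap\Z^d \subset \mathsf{V}^u_\rho$ and $A_0^{u,\rho}$ occurs. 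By Lemma~2.2 of~\cite{TW10b} the number of such cylinders is Poisson distributed with parameter at most $c\,u\,L_0^{d-1}$, so $\IP\big((A_0^{u,\rho})^C\big) \le 1 - \exp\{-c\,u\,L_0^{d-1}\}$, which is precisely the first term in~\eqref{eq::barAprob}.

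For $k \ge 1$, the complement $A_k^C$ is the event that some box of the $(k-1)$-th scale contained in $B_{(0,k)}$ is $(u_{k-1},\rho_{k-1},k-1)$-bad. The number of $(k-1)$-scale boxes contained in $B_{(0,k)}$ is at most $(L_k/L_{k-1})^d$, which by the explicit recursion~\eqref{eq:Lkdef} is at most $c\,k^{2d}L_{k-1}^{(1+\alpha+\beta)d}$. A union bound over these boxes, together with translation invariance of the cylinder process and Proposition~\ref{p:pkind} applied at scale $k-1$ (for $k=1$ one may instead invoke~\eqref{eq:0boxdecay} directly), gives
\begin{equation}
  \IP(A_k^C) \le c\,k^{2d}\,L_{k-1}^{(1+\alpha+\beta)d}\,\exp\big\{-(\log L_{k-1})^{1+\delta}\big\}.
\end{equation}

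It then remains to sum this over $k \ge 1$. Here I would use that~\eqref{eq:Lkdef} forces $L_k \ge L_{k-1}^{2+\alpha+\beta}$, hence $\log L_{k-1} \ge (2+\alpha+\beta)^{k-1}\log L_0$; consequently, for $L_0$ larger than a dimension-dependent constant, the exponent $(\log L_{k-1})^{1+\delta}$ dominates the logarithm of the polynomial prefactor uniformly in $k$, the series converges, and it is bounded by a constant multiple of its $k=1$ term. Since that term is at most $\exp\{-\tfrac12(\log L_0)^{1+\delta}\}$ after possibly enlarging $\clzero$, the whole sum is at most $c\exp\{-(\log L_0)^{\delta/2}\}$ (in fact far smaller than required). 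Combining with the estimate for $A_0^{u,\rho}$ yields~\eqref{eq::barAprob}; the closing remark is then immediate, since fixing $L_0$ large makes the second term small and sending $u\to0$ makes $1-\exp\{-c\,u\,L_0^{d-1}\}$ small.

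Because Proposition~\ref{p:pkind} already carries all the probabilistic weight, this statement is essentially a bookkeeping exercise, and I do not expect a genuine obstacle. The only point needing attention is to verify that the super-exponential growth of the scales in~\eqref{eq:Lkdef} makes $\sum_{k\ge1}\IP(A_k^C)$ summable and dominated by its first term, and that the deliberately generous target bound $c\exp\{-(\log L_0)^{\delta/2}\}$ comfortably absorbs both this tail and the contribution of the first scale.
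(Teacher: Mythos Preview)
Your proposal is correct and follows essentially the same approach as the paper: a union bound, the Poisson estimate from \cite{TW10b} for $(A_0^{u,\rho})^C$, and Proposition~\ref{p:pkind} combined with the count $(L_k/L_{k-1})^d$ for the higher-scale terms, summed using the super-exponential growth of the $L_k$. The only cosmetic difference is in how the tail sum is bounded: the paper factors out $\exp\{-(\log L_0)^{\delta/2}\}$ and checks the remaining series is bounded, whereas you argue the series is dominated by its first term; both are valid and lead to the same conclusion.
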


\begin{proof}
  By the definition of the event~$\bar A$, Proposition~\ref{p:pkind}, monotonicity in $u$, the stationarity of the cylinder process under translations, and the union bound, we obtain
  \begin{equation}
    \label{eq::barAproof}
    \begin{split}
      \IP\left(     \bar A \right)
      &\leq
      \IP\left( (A_0^{u,\rho})^C   \right)+
      \sum_{i=1}^\infty
      \left( \frac{L_k}{L_{k-1}} \right)^d \exp\left\{  -(\log L_{k-1})^{1+\delta}         \right\}
    \end{split}
  \end{equation}
  Recalling that, by Lemma~$(2.2)$ of~\cite{TW10b}, the number of cylinders of radius~$1$ intersecting~$B_\infty(0,2 L_0)$ is Poisson distributed with parameter bounded from above by $c u L_0^{d-1}$, we obtain
  \begin{equation}
    \label{eq::barAproof2}
    \begin{split}
      \IP\left(     \bar A \right)
      &\leq
      \left(1-\exp\left\{  -cu L_0^{d-1}     \right\}\right)
       +
      \exp\left\{  -(\log L_0)^{\delta/2}         \right\}
      \sum_{i=1}^\infty
      \left( \frac{L_k}{L_{k-1}} \right)^d \exp\left\{  -(\log L_k)^{1+\delta/2}         \right\}
      \\
      &\leq
      \left(1-\exp\left\{  -cu  L_0^{d-1}      \right\}\right) + c \exp\left\{  -(\log L_0)^{\delta/2}         \right\},
    \end{split}
  \end{equation}
  finishing the proof of the result.
\end{proof}

Recall that~${\bf line}(x,y)$ denotes the closed line segment connecting points~$x,y\in\R^d$ to each other. In the following definitions, we assume the occurrence of the event~$\bar A$. In~$\bar A$, the boxes~$B_{(0,k)}$ and~$B_{(2L_k{\bf e}_1,k)}$ are all simultaneously $(u_k,\rho_k,k)$-good for every~$k\in\N$. We can therefore choose, for each~$k\in\N$, points $y_{k}^0\in\mathsf{G}_{(0,k),{\bf e}_1}$. Define then, for~$k\in\N$, the cone set of the~$k$-th scale
\begin{equation}
\label{eq:conedef}
\begin{split}
{\bf Cone}_k
&:=
\left\{
\begin{array}{c}
x \in 2L_k\Z^d\cap B_{(0,k+1)};\, \text{there exists a point }  z\in B_{(0,k+1),{\bf e}_1}(y_{k+1}^0)
\\
\text{such that } {\bf line}(2L_k{\bf e}_1,z)\cap B_{(x,k)} \neq \emptyset
\end{array}
\right\}.
\end{split}
\end{equation}
In~$\bar A$, we have that~${\bf Cone}_k\subset\mathsf{V}^{u_k,k}_{\rho_k}$, and we can consider in this discrete set a graph structure inherited from~$2 L_k \Z^d$. Moreover, we can consider the dual graph~${\bf Cone}_k^*$, whose vertex- and edge-set are respectively defined by
\begin{equation}
\label{eq:conedualdef}
\begin{split}
V({\bf Cone}_k^*)
&:=
\left\{
\begin{array}{c}
x + L_k{\bf v};\, x \in {\bf Cone}_k ,\, {\bf v}
\in \mathsf{U}
\end{array}
\right\},
\\
E({\bf Cone}_k^*)
&:=
\left\{
\begin{array}{c}
(x + L_k{\bf v},x + L_k{\bf w});\, x \in {\bf Cone}_k ,\, {\bf v}\neq{\bf w},\,{\bf v},{\bf w} \in \mathsf{U}
\end{array}
\right\}.
\end{split}
\end{equation}
The vertices of~${\bf Cone}_k^*$ can be identified with the faces of the boxes of the $k$-th scale with center in~${\bf Cone}_k$, and two faces are neighbors when they are the faces of the same box.
\begin{figure}[ht]
	\centering
	\includegraphics[scale = .6]{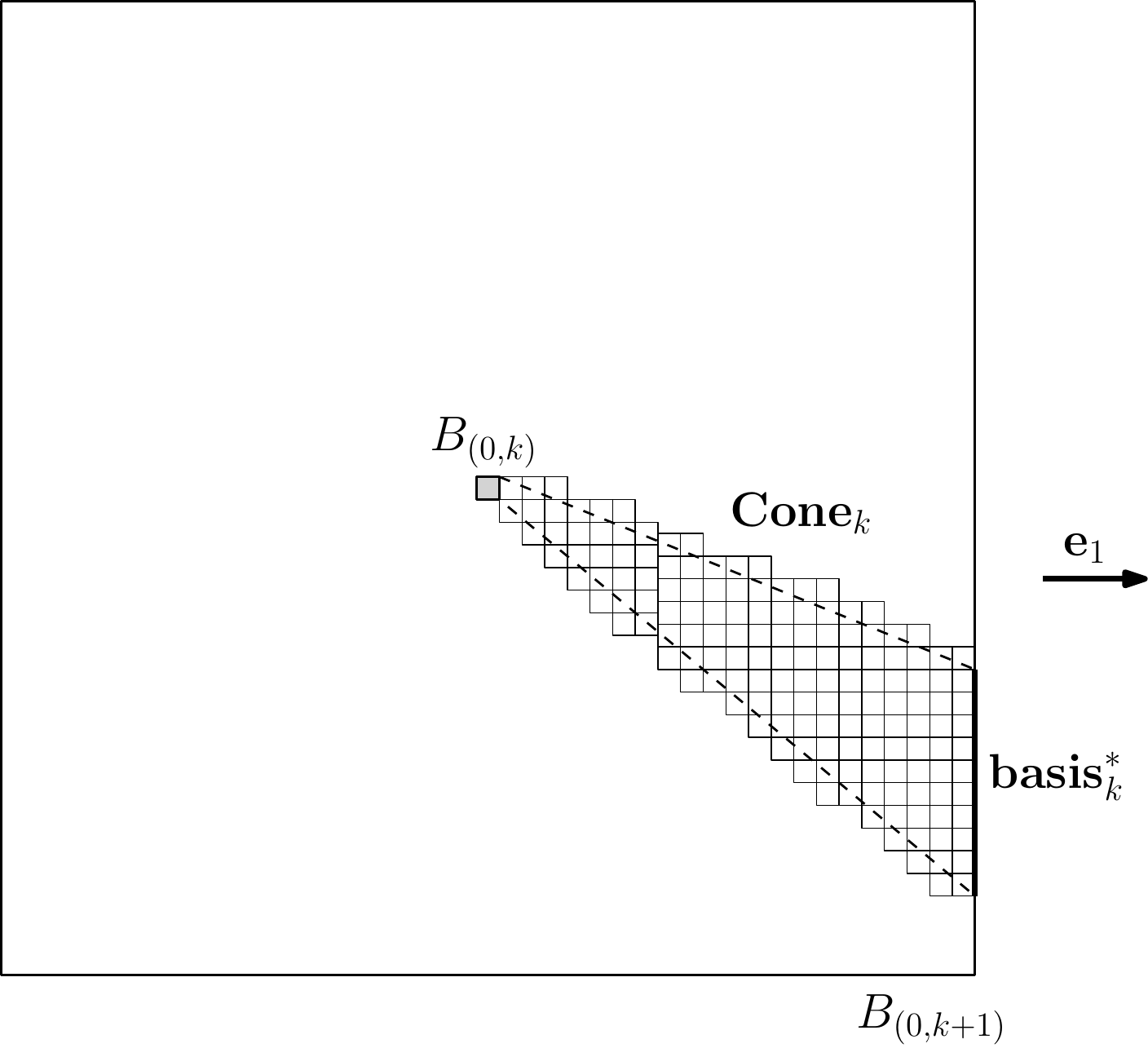}
	\vspace{0.1cm}
	\caption{Some of the sets involved in the construction of the flow~$\theta_{{\bf Cone}_k^*}$.}
	\label{f:conek}
\end{figure}

In order to simplify the notation, we denote the set $B_{(0, k + 1),{\bf e}_1}^k(y_{k+1}^0)  \subset{\bf Cone}_k^*$ by~${\bf basis}_k^*$. We construct a flow~$\theta_{{\bf Cone}_k^*}$ in~${\bf Cone}_k^*$ in the following manner:
\begin{enumerate}
	\item[(i)] Select a uniformly chosen random point~$Z\in {\bf basis}_k^* $;
	\item[(ii)] Consider the line segment~$ {\bf line}(L_k{\bf e}_1,Z)$, and choose in some predetermined arbitrary way a directed path~${\bf path}^*({\bf Cone}_k^*)$ in~${\bf Cone}_k^*$ starting at~$L_k{\bf e}_1$, ending at~$Z$, and minimizing~$\sup_{x\in {\bf path}^*({\bf Cone}_k^*)} \dist(x,  {\bf line}(L_k{\bf e}_1,Z))$;
	\item[(iii)] Let~$\theta_Z^{k,*}$ be the flow assigning~$1$ to a directed edge~$e$ if ${\bf path}^*({\bf Cone}_k^*)$ traverses~$e$, $-1$ if this path traverses~$-e$, and~$0$ otherwise;
	\item[(iv)] Define $\theta_{{\bf Cone}_k^*}(e)$ as $\IE[  \theta_Z^{k,*}(e)  ]$ for every edge~$e\in E({\bf Cone}_k^*)$, where the expectation is taken with respect to the random point~$Z$.
\end{enumerate}
The flow~$\theta_{{\bf Cone}_k^*}(e)$ will be part of the multi-scale construction of the finite-energy flow in~$\mathsf{V}^u_1$. For this construction, we will need the properties proved in the next lemma.

\begin{lemma}
\label{l:thetaCk}
In the event~$\bar A$, the flow~$\theta_{{\bf Cone}_k^*}$ above constructed has the following properties:
\begin{enumerate}
	\item[(i)] $\displaystyle \mathrm{div}( \theta_{{\bf Cone}_k^*}  )
	=
	{\bf 1}_{ L_k{\bf e}_1 }  -
	\frac{1}{\left| {\bf basis}_k^* \right|}{\bf 1}_{ {\bf basis}_k^* }$;
	\item[(ii)] There exists~$c>0$ such that, given~$x\in {\bf Cone}_k$ and an edge~$e_x\in E({\bf Cone}_k^*)$ between faces of~$x$,  $\displaystyle |\theta_{{\bf Cone}_k^*}(x)| \leq  \min\{ c L_k^{d-1}\langle x, {\bf e}_1   \rangle^{-(d-1)}, 1\}$.
\end{enumerate}
\end{lemma}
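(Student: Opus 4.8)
The plan is to verify the two properties directly from the construction of $\theta_{{\bf Cone}_k^*}$ as an average over the random point $Z$. For item (i), I would first observe that for each fixed realization of $Z$, the flow $\theta_Z^{k,*}$ is the indicator flow of a directed path in ${\bf Cone}_k^*$ from $L_k {\bf e}_1$ to $Z$; consequently its divergence is $\mathbbm{1}_{L_k {\bf e}_1} - \mathbbm{1}_{Z}$. Since $Z$ is chosen uniformly in ${\bf basis}_k^*$, taking expectations and using linearity of the divergence operator (divergence is a linear functional of the flow) gives $\mathrm{div}(\theta_{{\bf Cone}_k^*}) = \IE[\mathrm{div}(\theta_Z^{k,*})] = \mathbbm{1}_{L_k {\bf e}_1} - \frac{1}{|{\bf basis}_k^*|}\mathbbm{1}_{{\bf basis}_k^*}$, which is exactly (i). One should also check that the path ${\bf path}^*({\bf Cone}_k^*)$ is well-defined, i.e.\ that it stays inside ${\bf Cone}_k^*$: this follows because, on $\bar A$, every box of scale $k$ meeting the segment ${\bf line}(L_k{\bf e}_1, Z)$ (with $Z\in{\bf basis}_k^*\subset B_{(0,k+1),{\bf e}_1}^k(y_{k+1}^0)$) is $(u_k,\rho_k,k)$-good and hence lies in ${\bf Cone}_k$ by the definition \eqref{eq:conedef}, and the minimizing path can be taken within the boxes tracked by the segment.

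For item (ii), the bound $|\theta_{{\bf Cone}_k^*}(e_x)| \leq 1$ is immediate since $\theta_Z^{k,*}(e) \in \{-1,0,1\}$ for every $Z$, so its expectation has absolute value at most $1$. The content is the bound $c L_k^{d-1} \langle x, {\bf e}_1\rangle^{-(d-1)}$, which I would obtain from the following geometric estimate: for an edge $e_x$ between two faces of the box centered at $x$, we have $\theta_Z^{k,*}(e_x) \neq 0$ only if the path ${\bf path}^*({\bf Cone}_k^*)$ passes through (a face of) the box at $x$, and by the ``minimal deviation'' property in step (ii) of the construction, this forces the segment ${\bf line}(L_k{\bf e}_1, Z)$ to pass within $O(L_k)$ (i.e.\ within a bounded number of scale-$k$ boxes) of $x$. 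Writing $s := \langle x, {\bf e}_1 \rangle / \langle Z - L_k{\bf e}_1, {\bf e}_1\rangle \in [0,1]$ for the parameter at which the segment reaches the hyperplane $\{\langle \cdot, {\bf e}_1\rangle = \langle x, {\bf e}_1\rangle\}$, the point on the segment at that height is $L_k {\bf e}_1 + s(Z - L_k{\bf e}_1)$, and since $Z$ is uniform over a $(d-1)$-dimensional set of diameter $O(L_{k+1})$, the event that this point is within $O(L_k)$ of $x$ has probability at most $c (L_k / (s \cdot L_{k+1}))^{d-1} \cdot (\text{area factor})$; after simplifying using $s \, L_{k+1} \asymp \langle x, {\bf e}_1\rangle$ (up to constants, because $\langle Z-L_k{\bf e}_1,{\bf e}_1\rangle \asymp L_{k+1}$ and $|{\bf basis}_k^*|$ normalizes the density), this yields $\IP(\theta_Z^{k,*}(e_x)\neq 0) \leq c L_k^{d-1} \langle x, {\bf e}_1\rangle^{-(d-1)}$, hence the same bound for $|\theta_{{\bf Cone}_k^*}(e_x)| = |\IE[\theta_Z^{k,*}(e_x)]| \leq \IP(\theta_Z^{k,*}(e_x)\neq 0)$.

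The main obstacle I anticipate is making the ``tube'' argument in item (ii) fully rigorous: one needs to control how far the discretized path ${\bf path}^*({\bf Cone}_k^*)$ can stray from the continuous segment ${\bf line}(L_k{\bf e}_1, Z)$, and to translate ``the path visits the box at $x$'' into a precise condition on $Z$ that occupies a region of the right $(d-1)$-dimensional measure. This is essentially a bookkeeping exercise comparing the solid-angle/cone geometry with the uniform measure on ${\bf basis}_k^*$, and the factor $\langle x, {\bf e}_1\rangle^{-(d-1)}$ is exactly the inverse volume of the cross-section of the cone at ``height'' $\langle x, {\bf e}_1\rangle$; the only care needed is that constants do not accumulate across the identification of faces with vertices of ${\bf Cone}_k^*$ and across the passage from the segment to the path. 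Everything else is linearity of divergence and the trivial $\ell^\infty$ bound on indicator flows.
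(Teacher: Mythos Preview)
Your proposal is correct and follows essentially the same route as the paper: for (i) you use linearity of divergence applied to the path flow $\theta_Z^{k,*}$ with $\mathrm{div}(\theta_Z^{k,*})=\mathbf{1}_{L_k\mathbf{e}_1}-\mathbf{1}_Z$ and average over the uniform $Z$, and for (ii) you bound $|\theta_{{\bf Cone}_k^*}(e_x)|$ by the probability that the segment ${\bf line}(L_k\mathbf{e}_1,Z)$ passes within $O(L_k)$ of $x$, which the paper phrases as ${\bf line}(L_k\mathbf{e}_1,Z)\cap B(x,4L_k)\neq\emptyset$ and then estimates by the same cone/cross-section count you describe (your ``tube'' obstacle is exactly the paper's one-line ``elementary trigonometry'' step bounding $|\mathcal{Z}_x|$).
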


\begin{proof}
	To prove~$(i)$, we note that, conditioned on the random point~$Z\in{\bf basis}_k^*$, the flow  $\theta_Z^{k,*}$ is such that
	\[
	\mathrm{div}(\theta_Z^{k,*} )
	=
	{\bf 1}_{ L_k{\bf e}_1 }  -
	{\bf 1}_{Z}.
	\]
	By the linearity of the divergent, averaging the above equation over the possible values of~$Z$ yields property (i). Now, in order for~$\theta_Z^{k,*}(e_x)$ to be different from~$0$, we must have~${\bf line}(L_k{\bf e}_1,Z)\cap B(x,4L_k)\neq \emptyset$. Let~$\mathcal{Z}_x$ denote the set of vertices $z\in{\bf basis}_k^*$ such that~${\bf line}(L_k{\bf e}_1,z)\cap B(x,4L_k)\neq \emptyset$. Then, elementary trigonometry implies
	\begin{equation}
	\left(\frac{\langle x, {\bf e}_1   \rangle}{L_{k+1}} \right)^{d-1}\leq c \frac{L_{k}^{d-1}}{|\mathcal{Z}_x|},
	\end{equation}
	and therefore,
	\begin{equation}
	\IP(Z\in \mathcal{Z}_x) \leq c \min\left\{ \left(\frac{L_k}{\langle x, {\bf e}_1   \rangle}\right)^{d-1}, 1  \right\}
	\end{equation}
	We then obtain
	\begin{equation}
	\theta_{{\bf Cone}_k^*}(e_x) =\IE\left[ \theta_Z^{k,*}(e_x) \right] \leq  	\IP(Z\in \mathcal{Z}_x) \leq c \min\left\{ \left(\frac{L_k}{\langle x, {\bf e}_1   \rangle}\right)^{d-1}, 1  \right\},
	\end{equation}
	finishing the proof of the result.
\end{proof}

We can finally construct the promised finite energy flow. In the event~$\bar A$, for each~$k\in\N$, each~$x \in {\bf Cone}_k$, and each~${\bf v} \in \mathsf{U}$, we recall that the box~$(x,k)$ is $(u_k,\rho_k,k)$-good, that, by definition, the faces associated to points of~${\bf basis}_k^*$ are good, and therefore there exists a $k$-fractal~$\mathcal{F}((x,k),{\bf v},y_{(x,k),{\bf v}})$ contained in~$F_{(x,k),{\bf v}}$. Furthermore, since~$(u_k)_{k\geq 0}$ and $(\rho_k)_{k\geq 0}$ are increasing, and~$\rho_k\geq 1$, for sufficiently small~$u$ these fractals all exist simultaneously in~$\mathsf{V}^u_1$. We choose a sub-collection of these fractals requiring
\[
\mathcal{F}((x,k),{\bf v},y_{(x,k),{\bf v}}) = \mathcal{F}((x+2L_k{\bf v},k),-{\bf v},y_{(x+2L_k{\bf v},k),-{\bf v}})
\]
whenever the above equation is well defined. In other words, we ask that the fractals in a face shared by neighboring boxes agree.
We obtain the following result, which implies \ref{t:transient} by  the classical argument by Thompson,

\begin{theorem}
  \label{t:finiteenergyflow}
  There exists an event~$\bar A$ such that, for every~$\varepsilon>0$ there exists~$u>0$ such that~$\IP_u(\bar A)>1-\varepsilon$, and, in~$\bar A$, there exists a flow~$\theta$ of finite energy in~$\mathsf{V}^u_1$ from the origin to infinity, that is, such that~$\mathrm{div}(\theta)={\bf 1}_0$.
\end{theorem}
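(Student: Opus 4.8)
The plan is to assemble the multi-scale flow hierarchically, gluing together the cone flows $\theta_{{\bf Cone}_k^*}$ from Lemma~\ref{l:thetaCk} with the box flows $\theta^m_{{\bf v},{\bf w}}$ from Proposition~\ref{p:kflowenergy}, and then to check that the total energy is summable. First I would set $\bar A$ as in Lemma~\ref{l:barAprob}, so that for every $\varepsilon > 0$ we may fix $L_0 > \clzero$ and then $u > 0$ small enough that $\IP_u(\bar A) > 1 - \varepsilon$; on $\bar A$ every relevant box of every scale is good with parameters $(u_k, \rho_k)$, and since $(u_k)$ and $(\rho_k)$ increase to $\tilde u$ and $2$ while $\rho_k \geq 1$, all the fractals and paths constructed at scale $k$ live simultaneously inside $\mathsf{V}^u_1$ for $u$ small. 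The flow is defined scale by scale: at scale $k$ the cone flow $\theta_{{\bf Cone}_k^*}$ routes a unit of current from the face $L_k {\bf e}_1$ to the collection of faces ${\bf basis}_k^*$, spreading it uniformly; each coarse edge $e_x = (x + L_k {\bf v}, x + L_k {\bf w})$ of ${\bf Cone}_k^*$ carrying weight $\theta_{{\bf Cone}_k^*}(e_x)$ is then realized inside the good box $(x,k)$ by $\theta_{{\bf Cone}_k^*}(e_x) \cdot \theta^{(x,k)}_{{\bf v},{\bf w}}$, a genuine flow on the edges of $\mathsf{V}^u_1$ between the $k$-fractals $\mathcal{F}((x,k),{\bf v},\cdot)$ and $\mathcal{F}((x,k),{\bf w},\cdot)$ sitting in the shared faces; because we chose the fractals on shared faces to agree, these pieces concatenate consistently and, telescoping across scales, the divergences at intermediate faces cancel, leaving $\mathrm{div}(\theta) = {\bf 1}_0$.

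The main work is the energy bound. By item (ii) of Proposition~\ref{p:kflowenergy} the flow $\theta^{(x,k)}_{{\bf v},{\bf w}}$ is supported on edges with an endpoint in $\mathrm{int}(B_{(x,k)})$, so for a fixed scale $k$ the boxes used are essentially disjoint and the scale-$k$ contribution to the energy is $\sum_{x \in {\bf Cone}_k, e_x} \theta_{{\bf Cone}_k^*}(e_x)^2 \cdot \mathrm{Energy}(\theta^{(x,k)}_{{\bf v},{\bf w}})$. Using $\mathrm{Energy}(\theta^{(x,k)}_{{\bf v},{\bf w}}) \leq L_0^{3d} L_k^{-J}$ from Proposition~\ref{p:kflowenergy} and the pointwise bound $|\theta_{{\bf Cone}_k^*}(e_x)| \leq c\min\{L_k^{d-1}\langle x,{\bf e}_1\rangle^{-(d-1)},1\}$ from Lemma~\ref{l:thetaCk}(ii), one sums over $x \in {\bf Cone}_k$: the cone at scale $k$ has of order $(\langle x,{\bf e}_1\rangle / L_k)^{d-1}$ lattice points at distance comparable to $\langle x,{\bf e}_1\rangle$ from the apex, so $\sum_{x \in {\bf Cone}_k} \theta_{{\bf Cone}_k^*}(e_x)^2$ behaves like a convergent sum $\sum_{j} (j L_k)^{d-1} (L_k^{d-1}/(jL_k)^{d-1})^2 / L_k \asymp L_k^{d-2} \sum_j j^{-(d-1)}$, which is $O(L_k^{d-2})$ since $d \geq 3$ (a logarithmic factor if $d=3$, harmless). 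Multiplying by $L_0^{3d} L_k^{-J}$ gives a scale-$k$ energy of order $L_0^{3d} L_k^{d-2-J}$; choosing $J \in (d-2, 1)$ — possible precisely because the box energy exponent $J$ can be taken in $(0,1)$ and $d-2 < 1 \iff d < 3$ fails, so one instead takes $J$ close to $1$ and uses that $L_k^{d-2-J}$ still decays geometrically in $k$ once the ratios $L_k/L_{k-1}$ are large, which they are by \eqref{eq:Lkdef}. Summing the geometric series over $k$ yields a finite total energy.

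The step I expect to be the main obstacle is the bookkeeping of the gluing: verifying that routing $\theta_{{\bf Cone}_k^*}(e_x)$ through the box-level flow $\theta^{(x,k)}_{{\bf v},{\bf w}}$, whose sources and sinks are uniform measures on the $k$-fractals on the two faces of $(x,k)$, produces a flow whose divergence telescopes correctly against the scale-$(k-1)$ construction inside those same faces — that is, checking that $\mathrm{div}(\theta^{(x,k)}_{{\bf v},{\bf w}})$ as given in Proposition~\ref{p:kflowenergy}(i), being the difference of normalized indicators of the two fractals, matches up with the scale-$(k-1)$ cone-and-box flow on the neighboring box through the common face, so that nothing accumulates at intermediate faces and only the source ${\bf 1}_0$ survives. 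One must also confirm the disjointness claim underlying the energy split: $\theta^{(x,k)}_{{\bf v},{\bf w}}$ is supported near $B_{(x,k)}$ by Proposition~\ref{p:kflowenergy}(ii), and within a box the pieces $\theta^x_i$ were already argued to use disjoint edge sets in the proof of that proposition, so a given edge of $\mathsf{V}^u_1$ receives contributions from boundedly many scales and the energies add up to within a constant. Once these consistency and disjointness facts are in place, the energy estimate above closes the argument, and transience of the simple random walk on the component of $\mathsf{V}^u_1$ containing the origin follows from Thompson's criterion; passing to the graph $(\mathbb{Z}^d, \mathcal{E}')$ of Theorem~\ref{t:transient} is immediate since its edges are exactly the line segments contained in $\mathcal{V}^1_u$, which is what $\mathsf{V}^u_1$ records.
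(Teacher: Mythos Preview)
Your overall strategy matches the paper's exactly: glue the cone flows $\theta_{{\bf Cone}_k^*}$ to the box flows $\theta^{(x,k)}_{{\bf v},{\bf w}}$ as in \eqref{eq:fflow2}--\eqref{eq:fflow3}, check that the divergences telescope, and sum the energies over $k$. The divergence bookkeeping you flag as the main obstacle is indeed routine once the fractals on shared faces are chosen consistently, and your bounded-overlap observation across scales is correct.

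The genuine gap is in your energy computation for the cone. You correctly state in words that ${\bf Cone}_k$ has of order $(\langle x,{\bf e}_1\rangle / L_k)^{d-1}$ points at distance comparable to $\langle x,{\bf e}_1\rangle$ from the apex --- the key being that ${\bf Cone}_k \subset 2L_k\Z^d$, so these are \emph{coarse} lattice points. But the formula you then write,
\[
\sum_j (jL_k)^{d-1}\bigl(L_k^{d-1}/(jL_k)^{d-1}\bigr)^2 / L_k \;\asymp\; L_k^{d-2}\sum_j j^{-(d-1)},
\]
replaces the count $j^{d-1}$ (with $j$ indexing layers at spacing $2L_k$) by $(jL_k)^{d-1}$, inflating the sum by a spurious factor $L_k^{d-1}$; the trailing $/L_k$ does not repair this. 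The correct computation is simply
\[
\sum_{x \in {\bf Cone}_k} \theta_{{\bf Cone}_k^*}(e_x)^2 \;\leq\; c\sum_{j \geq 1} j^{d-1}\cdot j^{-2(d-1)} \;=\; c\sum_{j \geq 1} j^{-(d-1)} \;<\; \infty
\]
for $d \geq 3$, uniformly in $k$. This is exactly what the paper does in \eqref{eq:fflow4}, yielding $\mathrm{Energy}(\theta_k) \leq cL_0^{3d}L_k^{-J}$, which is summable in $k$ for any fixed $J \in (0,1)$ since the scales $L_k$ grow super-exponentially.

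Your attempted rescue --- ``take $J$ close to $1$ and use that $L_k^{d-2-J}$ still decays geometrically in $k$'' --- is false: for $d \geq 3$ and $J < 1$ the exponent $d-2-J$ is nonnegative, so $L_k^{d-2-J} \geq 1$ for all $k$ and does not decay at all, no matter how fast the ratios $L_k/L_{k-1}$ grow. Without correcting the counting error the argument does not close; once you correct it, everything else in your sketch is fine and coincides with the paper's proof.
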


\begin{proof}
  Assume the occurrence of the event~$\bar A$ from Lemma~\ref{l:barAprob}. Consider the flows of the dual lattice~$(\theta_{{\bf Cone}_k^*})_{k\geq 0}$. For each~$k\in\N$, we will construct in~$\mathsf{V}^u_1$, with~$u\leq \inf_k u_k$, a flow~$\theta_k$ such that
  \begin{equation}
    \label{eq:fflow1}
    \begin{split}
      \lefteqn{\mathrm{div}( \theta_k  )
	=
	\frac{1}{\left| \mathcal{F}((0,k),{\bf e}_1,y_{(0,k),{\bf e}_1}) \right|}{\bf 1}_{ \mathcal{F}((0,k),{\bf e}_1,y_{(0,k),{\bf e}_1}) }}
      \phantom{****}
      \\ &\phantom{****}-
      \frac{1}{\left| \mathcal{F}((0,k+1),{\bf e}_1,y_{(0,k+1),{\bf e}_1}) \right|}{\bf 1}_{ \mathcal{F}((0,k+1),{\bf e}_1,y_{(0,k+1),{\bf e}_1}) },
    \end{split}
  \end{equation}
  that is, this flow has a source on a $k$-fractal on a face of~$B(0,2L_k)\cap \Z^d$ and a sink on a~$(k+1)$-fractal on a face of~$B(0,2L_{k+1})\cap \Z^d$. For every~$x\in{\bf Cone}_k$ and~${\bf v},{\bf w} \in \mathsf{U}$, we consider the flow~$\theta_{{\bf Cone}_k^*}(x)$ evaluated on the directed edge between~$x+L_k{\bf v}$ and~$x+L_k{\bf w}$, that is
  \[
    \theta_{{\bf Cone}_k^*}(x+L_k{\bf v},x+L_k{\bf w}).
  \]
  We also consider the flow~$\theta_{{\bf v},{\bf w}}^{(x,k)}$ on~$\mathsf{V}^{u}_{1}$ constructed in Proposition~\ref{p:kflowenergy}. We define then the flow in~$B(x, L_k)\cap \Z^d$:
  \begin{equation}
    \label{eq:fflow2}
    \begin{split}
      \theta_x^k
      :=
      \sum_{{\bf v},{\bf w}}\theta_{{\bf Cone}_k^*}(x+L_k{\bf v},x+L_k{\bf w})\theta_{{\bf v},{\bf w}}^{(x,k)}.
    \end{split}
  \end{equation}
  We can then define
  \begin{equation}
    \label{eq:fflow3}
    \begin{split}
      \theta_k
      :=
      \sum_{x\in {\bf Cone}_k}\theta_x^k,
    \end{split}
  \end{equation}
  and by Lemma~\ref{l:thetaCk} and Proposition~\ref{p:kflowenergy}, \eqref{eq:fflow1} holds. The same results also imply
  \begin{equation}
    \label{eq:fflow4}
    \begin{split}
      \mathrm{Energy}(\theta_k )
      &=
      \sum_{x\in {\bf Cone}_k}\mathrm{Energy}(\theta_x^k )
      \leq c L_0^{3d}\sum_{x\in {\bf Cone}_k}    \min\left\{ \left(\frac{L_k}{\langle x, {\bf e}_1   \rangle}\right)^{d-1}, 1  \right\}^2 L_k^{-2J}
      \\
      &\leq c L_0^{3d}L_k^{-2J}\sum_{j=1}^\infty  j^{d-1} j^{-2(d-1)}
      \leq c L_0^{3d}L_k^{-2J}.
    \end{split}
  \end{equation}
  Letting~$\theta_{\mathrm{origin}}$ denote a flow with finite support, with source at the origin, and sink uniformly distributed over~$ \mathcal{F}((0,0),{\bf e}_1,y_{(0,0),{\bf e}_1})$, we can define
  \begin{equation}
    \label{eq:fflow5}
    \theta:=\theta_{\mathrm{origin}}+\sum_{k\geq 0} \theta_k,
  \end{equation}
  which yields a finite energy flow with the required properties. The result follows after using Lemma~\eqref{l:barAprob}.
\end{proof}

\bibliographystyle{plain}
\bibliography{bib/all}

\end{document}